\documentclass[QTF8,fontset=windows,final, leqno]{siamltex}
\usepackage{booktabs} 
\usepackage[noadjust]{cite}
\usepackage{ulem}
\usepackage{ctex}
\usepackage{amsmath}
\allowdisplaybreaks[4]
\usepackage{cancel}
\usepackage{multirow}
\usepackage{graphicx}
\usepackage{amssymb}
\usepackage{color}
\usepackage{mathtools}
\usepackage{tikz}
\usepackage{hyperref}
\usepackage{float}
\usepackage{caption}
\usepackage[toc,page]{appendix}
\usepackage{mathrsfs}
\usepackage{mathtools}
\DeclarePairedDelimiter{\norm}{\lVert}{\rVert}
\usepackage{latexsym, bm}
\usepackage{bm}
\numberwithin{equation}{section}
\newtheorem{remark}{Remark}[section]

\newcommand{\vertiii}[1]{{\left\vert\kern-0.23ex\left\vert\kern-0.23ex\left\vert #1
		\right\vert\kern-0.23ex\right\vert\kern-0.23ex\right\vert}}
\renewcommand\abstractname{Abstract}

\renewcommand\figurename{Figure}
\renewcommand\tablename{Table}

\normalsize

\renewcommand\refname{References}

\def\no{{\nonumber}}

\newcommand{\mNQ}{\mathbf{N}_{h}}
\newcommand{\mNu}{\mathcal{N}_{h}}
\newcommand{\mNQz}{\mathbf{N}}
\newcommand{\mNuz}{\mathcal{N}}
\newcommand{\muQ}{\mathbf{H}_h}
\newcommand{\muu}{\mu_h}
\newcommand{\muQz}{\mathbf{H}}
\newcommand{\muuz}{\mu}
\newcommand{\muQa}{\mathbf{H}_{1,h}}
\newcommand{\muua}{\mu_{1,h}}
\newcommand{\muQb}{\mathbf{H}_{2,h}}

\newcommand{\muQaz}{\mathbf{H}_{1}}
\newcommand{\muuaz}{\mu_{1}}
\newcommand{\muQbz}{\mathbf{H}_{2}}

\newcommand{\mLQ}{\mathcal{L}_{h}}
\newcommand{\mLu}{\mathcal{D}_{h}}

\newcommand{\id}[2]{\left\langle #1,#2\right\rangle_h}
\newcommand{\ia}[1]{ \norm{#1 }_{h}^2}
\newcommand{\ilQ}[1]{\norm{#1 }_{\mathcal{L}_h}^2}
\newcommand{\ilu}[1]{\norm{#1 }_{\mathcal{D}_h}^2}
\newcommand{\ib}[1]{\norm{#1 }_{H^1_{h}}^2}

\newcommand{\iaQ}[1]{\norm{#1 }^2_{\mathcal{Q}_{\mathcal{L}}}}
\newcommand{\ibQ}[1]{\norm{#1 }^2_{\mathcal{Q}_{\mathcal{L}}^1}}
\newcommand{\icQ}[1]{\norm{#1 }^2_{\mathcal{Q}_{\mathcal{L}}^2}}
\newcommand{\iau}[1]{\norm{#1 }^2_{\mathcal{Q}_{\mathcal{D}}}}
\newcommand{\ibu}[1]{\norm{#1 }^2_{\mathcal{Q}_{\mathcal{D}}^1}}

\newcommand{\bQ}{\mathbf{Q}}
\newcommand{\bu}{\mathbf{u}}
 \newcommand{\mQ}{\mathcal{Q}(\tau \mLQ)}
\newcommand{\dtau}{\delta_\tau}
\newcommand{\dt}{\tau} 

\newcommand{\ep}{\epsilon}

\newcommand{\bM}{\mathbf{M}}

 \newcommand{\ea}{\delta_t \boldsymbol{E}_{\mathbf{Q}}^{n+1}}
\newcommand{\eb}{\delta_t \mu_{h}^{n+1}}
\newcommand{\ec}{\delta_t e_{s}^{n+1}}

 \newcommand{\eQ}{ \boldsymbol{E}_{\mathbf{Q}}^{n+1}}

 \newcommand{\da}{\delta_t \mathbf{Q}(t_{n+1})}
 \newcommand{\db}{\delta_t u(t_{n+1})}
\def\ba{\mathbf{a}}

\def\bu{\mathbf{u}}

\def\bQ{\mathbf{Q}_h}
\def\bu{u_h}

\newcommand{\mq}{\mathcal{Q}}

\def\ma{\mathcal{A}}

\def\ts{\tilde{s}}

 \def\ba{\begin{equation}\begin{aligned}}
 \def\ed{\end{aligned}\end{equation}}

\CTEXoptions[today=old]
\begin{document}


\renewcommand{\thefootnote}{\fnsymbol{footnote}}
\title{GENERALIZED SAV-EXPONENTIAL INTEGRATOR SCHEMES FOR A Modified Landau-de Gennes Theory for Smectic Liquid Crystals \footnote{Last update: \today}}

\author{Wenshuai Hu
	\and
	Guanghua Ji\thanks{Laboratory of Mathematics and Complex Systems, Ministry of Education and School of Mathematical Sciences,
		Beijing Normal University, Beijing 100875, China.\newline
				\texttt{202431130052@mail.bnu.edu.cn}, \texttt{ghji@bnu.edu.cn, Corresponding author}}
\and Xiao Li\thanks{Laboratory of Mathematics and Complex Systems, Ministry of Education and School of Mathematical Sciences,
		Beijing Normal University, Beijing 100875, China.}
}

\maketitle
\begin{abstract}
The macroscopic state of Smectic-A (SmA) phases is governed by a modified Landau-de Gennes (mLdG) model, which couples a tensor-order parameter $\mathbf{Q}$ for the orientational order with a real scalar $u$ representing the positional density deviation.
In this paper, we propose and analyze a novel, highly efficient, and unconditionally energy-stable numerical scheme for this  coupled system. Our approach systematically integrates the exponential time differencing (ETD) method to exactly resolve the high-order stiff linear spatial operators, and the generalized scalar auxiliary variable  approach to efficiently treat the severe nonlinear phase couplings. In particular, the ETD time discretization is reformulated into an equivalent implicit backward Euler-type structure, a pivotal step that eliminates the restrictive CFL mesh-ratio conditions of the original method and enables a rigorous fully discrete error analysis.
Theoretically, we rigorously establish the unconditional energy stability with respect to a modified discrete energy and the uniform boundedness of the numerical tensor $\mathbf{Q}$, along with optimal error estimates that demonstrate the high-order accuracy in both time and space.
 Comprehensive numerical experiments are presented to demonstrate the accuracy, efficiency, and structural preservation of the algorithm, as well as its capability in capturing complex topological defect dynamics.
\end{abstract}

\begin{keywords}
 Smectic-A liquid crystals; $Q$-tensor model; Scalar Auxiliary Variable (SAV); Exponential Time Differencing (ETD); Energy stability; Maximum bound principle; Error estimates.
\end{keywords}

\pagestyle{myheadings}
\thispagestyle{plain}
\markboth{W. Hu,
	G. Ji \and X. Li}{EOP-GSAV-EI chemes for Smectic Liquid Crystals}
\section{Introduction}~\\
Liquid crystals are a unique state of soft matter that transition through various mesophases-primarily the nematic, chiral, and smectic phases-based on their degree of molecular organization \cite{deGennes1974,hicks2024modelling,wang2021modelling}. The smectic phase, in particular, further subdivides into distinct types such as Smectic-A (SmA) and Smectic-C (SmC) \cite{han2015microscopic}.
 Among these, SmA liquid crystals feature a dual-order structure, combining one-dimensional positional layering with an orientational director aligned parallel to the layer normal \cite{xia2021structural,xia2023variational}. Concurrently describing the distinct partial orderings of fluid-like orientation and solid-like layering, along with their intricate interplay, presents formidable mathematical and computational challenges. Within the modified Landau--de Gennes theory \cite{shi2025modified}, the orientational order is described by a macroscopic, symmetric, traceless $Q$-tensor that naturally accommodates topological defects and phase transitions, coupled with a real scalar density variation field $u$ characterizing the smectic layer structure. We concentrate on the $Q-u$ theory for the SmA liquid crystals flows in this paper.

 The modified Landau-de Gennes (mLdG)  energy \cite{shi2025modified,xia2021structural,xia2023variational} for the  SmA phase is  given by
\begin{equation}
    E(\mathbf{Q}, u) = \int_{\Omega} \left( f_{n}(\mathbf{Q}, \nabla \mathbf{Q}) + f_{s}(u) + f_{int}(\mathbf{Q}, u) \right) \, \mathrm{d}\mathbf{x},
\end{equation}
where $\Omega$ is a smooth, bounded domain in $\mathbb{R}^d (d=2,3)$, $u$ represents the scalar density variation associated with the smectic layering, defined  in the space
  $   \mathcal{S}_u^{(d)} \overset{\mathrm{def}}{=} \left\{ u : \Omega \to \mathbb{R} \mid u \in H^2(\Omega) \right\},$
 and  $Q$ describes the orientational order of the nematic phase,  defined in the space
\begin{equation}
    \mathcal{S}_{\mathbf{Q}}^{(d)} \overset{\mathrm{def}}{=} \left\{ M \in \mathbb{R}^{d \times d} \;\middle|\; \operatorname{tr}(M) = 0, M^{ij} = M^{ji} \in \mathbb{R} \ \forall i,j = 1, \dots, d \right\},
\end{equation}
where $\operatorname{tr}(M) := \sum_{i=1}^d M^{ii}$.
The nematic energy density $f_n$ is based on the LdG theory and consists of an elastic term and a bulk potential:
\begin{equation}
    f_{n}(\mathbf{Q}, \nabla \mathbf{Q}) = \frac{K}{2} |\nabla \mathbf{Q}|^2 + f_{bn}(\mathbf{Q}),
\end{equation}
where $K > 0$ is the elastic constant. The bulk energy density $f_{bn}(\mathbf{Q})$ governs the isotropic-nematic phase transition and is given by a polynomial expansion of the $Q$-tensor invariants:
\begin{equation}
    f_{bn}(\mathbf{Q}) =
    \begin{cases}
        \frac{A}{2}\mathrm{tr}(\mathbf{Q}^2) + \frac{C}{4}(\mathrm{tr}(\mathbf{Q}^2))^2, & \text{if } d=2,\\
        \frac{A}{2}\mathrm{tr}(\mathbf{Q}^2) - \frac{B}{3}\mathrm{tr}(\mathbf{Q}^3) + \frac{C}{4}(\mathrm{tr}(\mathbf{Q}^2))^2, & \text{if } d=3, \\
    \end{cases}
\end{equation}
where $A = \alpha_1(T - T_1^*)$ is the rescaled temperature in which $\alpha_1 > 0$ and $T_1^*$ is a characteristic liquid crystal temperature, and $B, C > 0$ are material-dependent bulk constants.

The smectic bulk energy density $f_{s}(u)$ describes the formation of the layered structure and is given by a polynomial expansion of the scalar order parameter $u$:
\begin{equation}
    f_{s}(u) = \frac{a}{2}u^2 + \frac{b}{3}u^3 + \frac{c}{4}u^4,
\end{equation}
where $a = \alpha_2(T - T_2^*)$ is a temperature-dependent parameter with $\alpha_2 > 0$, and $T_2^* < T_1^*$ is a critical material temperature related to N-S phase transition; $b, c > 0$ are material-dependent constants.

The coupling term $f_{int}(\mathbf{Q}, u)$ represents the interaction between the nematic director and the smectic layer normal, and is given by
\begin{align*}
f_{int}(\mathbf{Q}, u) &= \begin{cases}
B_0 |D^2 u|^2, & A \geqslant \frac{B^2}{27C}, d = 3 \text{ or } A \geqslant 0, d = 2, \\
B_0 \left| D^2 u + q^2 \left( \frac{\mathbf{Q}}{s_+} + \frac{\mathbf{I}_d}{d} \right) u \right|^2, & otherwise,
\end{cases}, \\
    s_{+} &= \begin{cases}
\frac{B+\sqrt{B^2-24AC}}{4C}, & A < \frac{B^2}{27C}, d = 3, \\
\sqrt{\frac{-2A}{C}}, & A < 0, d = 2,
\end{cases}
\end{align*}
where $B_0 > 0$ is the coupling strength, $q \approx 2\pi/l$ is the wave number associated with the smectic layer thickness $l$, and $\mathbf{I}_d$ is the $d \times d$ identity matrix.

The $L^2$-gradient flow dynamics in $\mathbb{R}^d$ ($d = 2, 3$) corresponding to the energy functional $E[\mathbf{Q}, u]$ leads to a coupled system of nonlinear PDEs for the tensor order parameter $\mathbf{Q}$ and the scalar density variation $u$. Since $\mathbf{Q}$ takes values in the constrained space $\mathcal{S}^{(d)}$, its variational derivative must be projected onto the space of symmetric and traceless matrices. The explicit tensor form of the coupled gradient flow equations is given by:
\begin{align}
   \frac{\partial Q^{ij}}{\partial t} &= - \left( \frac{\delta E}{\delta Q} \right)^{ij} + \lambda \delta^{ij} + \mu^{ij} - \mu^{ji}, \quad 1 \le i, j \le d, \label{1.3a}\\
    \frac{\partial u}{\partial t} &= -  \frac{\delta E}{\delta u},  \label{1.3b}
\end{align}
where  $\lambda$ and $\mu = (\mu^{ij})_{d\times d}$ are Lagrange multipliers corresponding to the tracelessness constraint and the matrix symmetry constraint, respectively. The resulting system  of \eqref{1.3a}-\eqref{1.3b} can be explicitly written as
\begin{equation}\label{eq1-9}
\begin{aligned}
    \frac{\partial \mathbf{Q}}{\partial t} &= K\Delta\mathbf{Q} - \left[ A\mathbf{Q} - B \left( \mathbf{Q}^2 - \frac{\mathrm{tr}(\mathbf{Q}^2)}{3}\mathbf{I} \right) + C \mathrm{tr}(\mathbf{Q}^2)\mathbf{Q} \right] \\
    &\quad - 2B_0 q^2 / s_+ \cdot \left( u \cdot D^2 u - \frac{\mathrm{tr}(u \cdot D^2 u)}{3}\mathbf{I} \right) - 2B_0 q^4 \cdot \frac{\mathbf{Q}}{s_+^2} u^2,\\
    \frac{\partial u}{\partial t} &= - 2B_0\Delta^2 u - au - bu^2 - cu^3  - 2B_0 D^2 u : q^2 M \\
    &\quad - 2B_0 q^2\nabla \cdot \left( \nabla \cdot \left( M u \right) \right)  - 2B_0 \cdot \left| q^2 M \right|^2 u,
\end{aligned}
\end{equation}
where $M=\frac{\mathbf{Q}}{s_+} + \frac{\mathbf{I}_d}{d}$, and $\Delta^2 u = \Delta(\Delta u)$ is the bi-Laplacian.

The dynamics of the $Q-u$ SmA model are governed by a system of highly nonlinear, strongly coupled gradient-flow equations. The variational derivation of the $Q-u$ free energy results in a stiff, multi-scale system: a second-order nonlinear parabolic partial differential equation (PDE) for the tensor $Q$, coupled with a typically fourth-order highly nonlinear parabolic PDE for the layer variable $u$. Solving this coupled system numerically presents severe computational bottlenecks. The stiff linear differential operators demand prohibitively small time steps for stability if treated explicitly. Conversely, standard fully implicit treatments necessitate the solution of massive, fully coupled, nonlinear algebraic systems at each time step, which is computationally expensive and often struggles with convergence, especially in three-dimensional simulations \cite{shi2025modified}.

To address linear stiffness in gradient flows, Exponential Time Differencing (ETD) integrators exactly resolve the linear part, allowing for significantly larger time steps while maintaining high accuracy, making them highly successful in many classical phase-field models such as the Allen-Cahn and Cahn-Hilliard equations.
However, the direct application of ETD to highly nonlinear systems does not naturally guarantee the decay of the original free energy, which is a fundamental thermodynamic property of gradient flows \cite{du2018stabilized,du2019,du2021,liu2025maximum}.

In recent years, the Scalar Auxiliary Variable (SAV) approach has revolutionized the design of structure-preserving algorithms by introducing a scalar variable for the nonlinear free energy, thereby allowing the nonlinear terms to be treated explicitly or ly. This leads to numerical schemes that are linear, decoupled and, most importantly, unconditionally energy-stable with respect to a modified discrete energy.
Leveraging the synergy of ETD and SAV, [4] proposed stabilized linear schemes up to second-order for Allen-Cahn flows. Their approach uniquely achieves unconditional preservation of both energy dissipation and the maximum bound principle (MBP)-overcoming typical time-step restrictions for higher-order methods-while providing optimal error estimates \cite{shen2019new,shen2018scalar,zhang2022generalized,jiang2022improving,liu2024novel,zhang2025novel}.

The integration of ETD and SAV methodologies has recently been successfully applied to various complex systems. For instance, stabilized Exponential-SAV (sESAV) schemes have been developed for scalar Allen-Cahn-type gradient flows, demonstrating unconditional modified-energy decay and preservation of the maximum bound principle (MBP). More directly related to our work, sESAV schemes have been successfully extended to nematic $Q$-tensor flows, proving that the multi-component tensor structure and physical constraints of liquid crystals can be elegantly managed within this framework.
Furthermore, for smectic layer dynamics (without the $Q$-tensor coupling), unconditionally energy-stable schemes based on auxiliary variable methods, such as the Invariant Energy Quadratization (IEQ) and BDF2-SAV methods, have been developed, confirming the efficacy of SAV-type approaches for handling the severe nonlinearities inherent in high-order smectic layer energies \cite{ju2022generalized,hou2025energy}.

Despite these significant advancements, there is currently no work that directly applies SAV-EI scheme to the comprehensive $Q-u$ Smectic-A gradient-flow model. Existing numerical studies on the $Q-u$ framework have primarily focused on static minimization problems or employed standard integration techniques that may suffer from severe time-step restrictions or nonlinear solver complexities.

In this paper, we bridge this gap by proposing and analyzing a novel, efficient, and rigorously energy-stable numerical scheme for the $Q-u$ Smectic-A gradient-flow model. We develop a fully discrete scheme by combining the Exponential Time Differencing (ETD) method for the stiff linear spatial operators with the generalized Scalar Auxiliary Variable (SAV) approach for the highly nonlinear coupling terms. This combination allows us to construct a scheme that is linear and decoupled at each time step, requiring only the solution of constant-coefficient linear systems, which can be solved extremely fast (e.g., via Fast Fourier Transform). Most critically, we rigorously prove that our proposed EOP-GSAV-EI  scheme is unconditionally energy-stable, ensuring thermodynamic consistency and long-term numerical robustness even for large time steps.

The rest of the paper is organized as follows. In Section 2, we present the formulation of the fully discrete EOP-GSAV-EI  scheme for the $Q-u$ Smectic-A model. We rigorously prove the unconditional energy stability of the proposed scheme in Section 3. In Section 4, we provide comprehensive numerical experiments to demonstrate the accuracy, efficiency, and structural preservation properties of our algorithm, as well as its capability in capturing complex defect dynamics and phase transitions. Finally, we conclude with a summary and discussion of future research directions in Section 5.
\section{ Fully discrete EOP-GSAV-EI finite difference scheme}
We first split the total free energy $E(\mathbf{Q}, u)$ into a linear quadratic part and a nonlinear part:
\begin{equation}
    E(\mathbf{Q}, u) = E_0(\mathbf{Q}, u) + E_1(\mathbf{Q}, u),
\end{equation}
where
\begin{align*}
    E_0(\mathbf{Q}, u) &=  \int_{\Omega} \left( K |\nabla \mathbf{Q}|^2 + 2B_0 |\Delta u|^2 \right) d\mathbf{x}, \\
    E_1(\mathbf{Q}, u) &= \int_{\Omega} 2 B_0(D^2 u : q^2 M u )+B_0 \left| q^2 M u \right|^2 + f_{bn}(\mathbf{Q}) + f_{s}(u) \, d\mathbf{x}.
\end{align*}

From the article of [4], we know that the regularity of the solution and the structure of the nonlinear energy ensure that $u \in H^2(\Omega)$ and $\mathbf{Q} \in H^1(\Omega)$ are uniformly bounded in time.
Thus, there exist two positive constants $C_*, C^*$ such that
\begin{equation}
    -C_* \le E_1(\mathbf{Q}(t), u(t)) \le C^*,
\end{equation}
where  we have used the fact that
\begin{align*}
     \int_{\Omega} 2 B_0(D^2 u : q^2 M u ) \, d\mathbf{x} \leq  \int_{\Omega} \left( 2B_0 \cdot \left| q^2 M u \right|^2 + 2B_0 |D^2 u|^2 \right) d\mathbf{x}\leq  C^*.
\end{align*}
By introducing a scalar auxiliary variable $s(t) = E_1(\mathbf{Q}, u)$, we formulate the modified total energy $\mathcal{E}$ and an exponential stabilization factor $g$ to ensure unconditional discrete energy dissipation:
\begin{align}
    g(\mathbf{Q}, u, s) := \frac{\exp(s)}{\exp(E_1(\mathbf{Q}, u))}, \quad
    \mathcal{E}(\mathbf{Q}, u, s) := \frac{L_1}{2}\|\nabla \mathbf{Q}\|^2 + \frac{B_0}{2}\|\Delta u\|^2 + s \ge -C_*. \label{eq_modified_system}
\end{align}
At the continuous level, $g \equiv 1$, and $\mathcal{E}$ exactly recovers the original free energy $E(\mathbf{Q}, u)$.

Denoting the nonlinear variations as $\mathbf{H} = \frac{\delta E_1}{\delta \mathbf{Q}}$, $\mu = \frac{\delta E_1}{\delta u}$, the governing equations for $\mathbf{Q}$ and $u$ are coupled with the evolution of $s(t)$ to form the complete equivalent system:
\begin{align}
    \frac{\partial \mathbf{Q}}{\partial t} -K\Delta\mathbf{Q} &= -  g(\mathbf{Q}, u, s) \mathbf{H}, \label{eq_sys_Q} \\
    \frac{\partial u}{\partial t} + 2B_0\Delta^2 u &= -  g(\mathbf{Q}, u, s) \mu, \label{eq_sys_u} \\
    \frac{d s}{dt} &= g \left( \int_{\Omega} \mathbf{H} : \frac{\partial \mathbf{Q}}{\partial t} \, d\mathbf{x} + \int_{\Omega} \mu \frac{\partial u}{\partial t} \, d\mathbf{x} \right). \label{eq_sys_s}
\end{align}
\subsection{Spatial Discretization and Discrete Function Spaces}~\\
To simplify the notation, we consider the problem in the domain $\Omega = [0, L_d]^3$. Given a positive integer $J$, the uniform mesh partitioning size for each spatial direction is set to be $h = L_d/J$. For a regular grid, all variables are stored at the primary mesh points. We denote by $\mathbf{E}$ the set of mesh points, defined by
\begin{equation*}
    \mathbf{E} = \left\{ (x_p, y_q, z_r) = (ph, qh, rh) \mid p, q, r = 0, 1, \dots, J \right\}.
\end{equation*}
The corresponding periodic grid function space is given by
\begin{equation*}
    E_h^{\text{per}} = \{ U : \mathbf{E} \to \mathbb{R} \mid U_{0,q,r} = U_{J,q,r}, \, U_{p,0,r} = U_{p,J,r}, \, U_{p,q,0} = U_{p,q,J} \text{ for all } 0 \le p, q, r \le J \}.
\end{equation*}
For ease of notation, we handle the periodic boundary conditions by assigning the ghost nodes the values of their corresponding interior nodes; that is, for any $0 \le p, q, r \le J$,
\begin{equation*}
    U_{-1,q,r} = U_{J-1,q,r}, \quad U_{J+1,q,r} = U_{1,q,r},
\end{equation*}
and analogous periodic extensions apply to the $q$ and $r$ directions.

Then, we define the forward, backward, and central difference operators for the grid function $U \in E_h^{\text{per}}$ along the $x$-direction as:
\begin{align*}
    (D_1^+ U)_{p,q,r} = \frac{U_{p+1,q,r} - U_{p,q,r}}{h},\quad
    (D_1^- U)_{p,q,r} = \frac{U_{p,q,r} - U_{p-1,q,r}}{h},\quad
    (D_1^c U)_{p,q,r} = \frac{U_{p+1,q,r} - U_{p-1,q,r}}{2h},
\end{align*}
where the difference operators $D_2^{\pm,c}$ and $D_3^{\pm,c}$ along the $y$- and $z$-directions are defined analogously. Moreover, we denote the mixed central difference as $D_{k,l}^c U = D_k^c D_l^c U$ for $k,l \in \{1,2,3\}$ for simplicity.

In a collocated grid, to maintain a compact stencil for the Laplacian, we define the discrete gradient operator $\nabla_h : E_h^{\text{per}} \to (E_h^{\text{per}})^3$ using forward differences:
\begin{equation*}
    (\nabla_h U)_{p,q,r} = ( (D_1^+ U)_{p,q,r}, (D_2^+ U)_{p,q,r}, (D_3^+ U)_{p,q,r} )^T,
\end{equation*}
and the corresponding discrete divergence operator $\nabla_h \cdot : (E_h^{\text{per}})^3 \to E_h^{\text{per}}$ using backward differences:
\begin{equation*}
    \nabla_h \cdot (U^{(1)}, U^{(2)}, U^{(3)})^T = D_1^- U^{(1)} + D_2^- U^{(2)} + D_3^- U^{(3)}.
\end{equation*}

Because all variables are collocated, the discrete inner products no longer require averaging operators. They are simply given by:
\begin{align*}
    \langle U, V \rangle_h = h^3 \sum_{p,q,r=1}^{J-1} U_{p,q,r} V_{p,q,r} \quad \forall U, V \in E_h^{\text{per}},\quad
    [\mathbf{U}, \mathbf{V}]_h = \sum_{k=1}^3 \langle U^{(k)}, V^{(k)} \rangle_h \quad \forall \mathbf{U}, \mathbf{V} \in (E_h^{\text{per}})^3.
\end{align*}
Then, for any $U \in E_h^{\text{per}}$, the corresponding discrete $l^2$, $H^1$, $H^2$, and $\infty$ norms are given by
\begin{align*}
    \|U\|_h^2 &= \langle U, U \rangle_h, \quad
    \|\nabla_h U\|_h^2 = [\nabla_h U, \nabla_h U]_h = \sum_{k=1}^3 \langle D_k^+ U, D_k^+ U \rangle_h, \\
    \|U\|_{H_h^1}^2 &= \|U\|_h^2 + \|\nabla_h U\|_h^2, \quad
    \|U\|_{H_h^2}^2 = \|U\|_{H_h^1}^2 + \|\Delta_h U\|_h^2,\quad
    \|U\|_\infty = \max_{0 \le p,q,r \le J} |U_{p,q,r}|,
\end{align*}
where the discrete Laplacian is defined as $\Delta_h U = \sum_{k=1}^3 D_k^+ D_k^- U$, and the discrete biharmonic operator is defined as $\Delta_h^2 U = \Delta_h (\Delta_h U)$.
Using summation-by-parts on the collocated grid, it is easy to check that, for any $U, V \in E_h^{\text{per}}$, it holds that
\begin{equation*}
\begin{aligned}
    \langle D_k^- D_k^+ U, V \rangle_h &= - \langle D_k^+ U, D_k^+ V \rangle_h = \langle U, D_k^- D_k^+ V \rangle_h, \\
    \langle D_{k,l}^c U, V \rangle_h &= - \langle D_l^c U, D_k^c V \rangle_h = - \langle D_k^c U, D_l^c V \rangle_h = \langle U, D_{k,l}^c V \rangle_h,
\end{aligned}
\end{equation*}
which also implies that
\begin{equation*}
    \langle \Delta_h U, V \rangle_h = - [\nabla_h U, \nabla_h V]_h = \langle U, \Delta_h V \rangle_h.
\end{equation*}

For the tensor-valued grid functions, we define the discrete function space $ \mathbf{S}_h^{(3)}$ for the $Q$-tensor as
\begin{equation*}
    \mathbf{S}_h^{(3)} = \left\{ \Phi_h \bigg| \Phi_h^{i,j} \in E_h^{\text{per}}, \, \operatorname{tr}(\Phi_h) := \sum_{i=1}^3 \Phi_h^{ii} = 0, \, \Phi_h^{i,j} = \Phi_h^{j,i}, \, i,j=1,2,3 \right\}.
\end{equation*}
The discrete $l^2$ norm, $l^\infty$ norm, $H_h^1$ norm and $H_h^2$ norm of the grid tensor-functions $\Phi_h, \Psi_h \in \mathbf{S}_h^{(3)}$ are defined respectively by
\ba\label{eq_norm}
 \|\Phi_h\|_h := \sqrt{\sum_{i,j=1}^3 \|\Phi_h^{i,j}\|_h^2}, \quad
    \|\Phi_h\|_\infty := \max_{0 \le p,q,r \le J} |\Phi_{h}|_F,\quad    |\Phi_{h}|_F := \left( \sum_{i,j=1}^3 |\Phi_{h}^{i,j}|^2 \right)^{1/2},\\    \|\nabla_h \Psi_h\|_h := \sqrt{\sum_{k=1}^3 \sum_{i,j=1}^3 \|D_k^+ \Psi_h^{i,j}\|_h^2}, \quad
    \|\Psi_h\|_{H_h^1} := \sqrt{\|\Psi_h\|_h^2 + \|\nabla_h \Psi_h\|_h^2},\\
    \|\Delta_h \Psi_h\|_h := \sqrt{\sum_{i,j=1}^3 \|\Delta_h \Psi_h^{i,j}\|_h^2}, \quad
    \|\Psi_h\|_{H_h^2} := \sqrt{\|\Psi_h\|_{H_h^1}^2 + \|\Delta_h \Psi_h\|_h^2}.
\ed
Moreover, the tensor-matrix product and discrete Frobenius product for any $\Phi_h, \Psi_h \in \mathbf{S}_h^{(3)}$ are simplified as all components are collocated:
\begin{equation*}
    (\Phi_h \Psi_h)^{i,j}_{p,q,r} := \sum_{k=1}^3 (\Phi_h^{i,k})_{p,q,r} (\Psi_h^{k,j})_{p,q,r}, \quad
    \langle \Phi_h, \Psi_h \rangle_h = \sum_{i,j=1}^3 \langle \Phi_h^{i,j}, \Psi_h^{i,j} \rangle_h.
\end{equation*}
\subsection{Fully discrete scheme}~\\
Let $\tau$ be the time step size. Set $t_{n+1} = t_n + \tau$, and denote the numerical approximations of $\mathbf{Q}_h(t_n)$, $u_h(t_n)$, and $s(t_n)$ by $\mathbf{Q}_h^n$, $u_h^n$, and $s^n$, respectively.
We first define the fully discrete approximation of the continuous free energy $E_1$ at time level $t_n$, denoted as $E_{1h}^n$ as follows:
\begin{equation}\label{eq_s_ex_n}
\begin{aligned}
    E_{1h}^n
    = 2 B_0 q^2 \langle D_h^2 u_h^n , M_h^n u_h^n \rangle_h + B_0 q^4 \| M_h^n u_h^n \|_h^2 + \langle f_{bn}(\mathbf{Q}_h^n), 1 \rangle_h + \langle f_{s}(u_h^n), 1 \rangle_h,
\end{aligned}
\end{equation}
where $M_h^n = \frac{\mathbf{Q}_h^n}{s_+} + \frac{\mathbf{I}_d}{d}$ is the discrete evaluation of the tensor $M$ at the current state.

Subsequently, to account for the discrepancy between the numerical auxiliary variable and the exact physical energy, we define the scalar relaxation factor $g^n$ as:
\begin{equation}\label{eq:g_n}
    g^n := g(\mathbf{Q}_h^n, u_h^n, s^n) = \frac{e^{s^n}}{e^{E_{1h}^n}}.
\end{equation}
Then, we define the modified discrete linear operators $\mLQ: \mathbf{S}_h^{(d)} \to \mathbf{S}_h^{(d)}$ and $\mLu: E_h^{\text{per}} \to E_h^{\text{per}}$ with the positive stabilizing parameters $\kappa_1$ and $\kappa_2$ as follows:
\begin{align} \label{eq_L}
    \mLQ\mathbf{Q}_h^n = -K\Delta_h \mathbf{Q}_h^n + g^n \kappa_1 \mathbf{Q}_h^n,\quad
    \mLu u_h^n = 2B_0\Delta_h^2 u_h^n + g^n \kappa_2 u_h^n.
\end{align}
It is clear that the operators $\mLQ$ and $\mLu$ are symmetric positive-definite, applying the summation-by-parts formula naturally induces their corresponding weighted discrete energy norms:
$ \ilQ{ \cdot }  := K  \ilQ{\nabla_h \cdot } + g^n \kappa_1  \ilQ{\cdot}$
and
$\ilu{\cdot} := 2B_0  \ilu{\Delta_h \cdot } + g^n \kappa_2 \ilu{\cdot}$.

Then, we define the nonlinear terms $\mathcal{N}_{\mathbf{Q},h}^n$ and $\mathcal{N}_{h}^n$ with the positive stabilizing parameters $\kappa_1$ and $\kappa_2$ as the discrete approximations of the nonlinear variations in the modified system \eqref{eq_modified_system}:
\ba \label{eq_N_Q_def}
\mNQz(t_n) &\coloneqq  \mNQz(\mathbf{Q}(t_n), u(t_n))= -g(t_n) \mathbf{H}(t_n)+ g(t_n)\kappa_1 \mathbf{Q}(t_n),\\
\mathbf{N}_{h}^n &:= \mathbf{N}_{h}(\mathbf{Q}_h^n, u_h^n, s^n) = -g^n \muQ^n + g^n \kappa_1 \mathbf{Q}_h^n, \\
\mNuz(t_n) &\coloneqq  \mNuz(\mathbf{Q}(t_n), u(t_n))= -g(t_n) \mu(t_n)+ g(t_n)\kappa_2 u(t_n),\\
\mathcal{N}_{h}^n &:= \mathcal{N}_{h}(\mathbf{Q}_h^n, u_h^n, s^n) = -g^n \mu_{h}^n + g^n \kappa_2 u_h^n.
\ed
where $\muQ^n$ and $\mu_{h}^n$ are the discrete approximations of the nonlinear variations $\mathbf{H}$ and $\mu$ at time level $t_n$, respectively, given by
\begin{equation}\label{eq_H_mu_def}
\begin{aligned}
    \muQ^n &= A \mathbf{Q}_h^n - B \left( (\mathbf{Q}_h^n)^2 - \frac{\mathrm{tr}((\mathbf{Q}_h^n)^2)}{3}\mathbf{I} \right) + C \mathrm{tr}((\mathbf{Q}_h^n)^2)\mathbf{Q}_h^n \\&\quad + \frac{2B_0 q^2}{s_+} \left( u_h^n D_h^2 u_h^n - \frac{\mathrm{tr}(u_h^n D_h^2 u_h^n)}{3}\mathbf{I} \right) + 2B_0 q^4 \frac{\mathbf{Q}_h^n}{s_+^2} (u_h^n)^2,\\\mu_{h}^n = au_h^n + b(u_h^n)^2 &+ c(u_h^n)^3 + 2B_0 q^2 M_h^n : D_h^2 u_h^n  + 2B_0 q^2 \nabla_h \cdot \big( \nabla_h \cdot (M_h^n u_h^n) \big) + 2B_0 q^4 |M_h^n|^2 u_h^n.
\end{aligned}
\end{equation}

Applying the variation-of-constants formula and approximating the nonlinear terms by their values at $t_n$, we obtain the following first-order fully discrete EOP-GSAV-EI scheme for the $Q-u$ Smectic-A model:
\begin{align}
    \bQ^{n+1} &= e^{-\tau \mLQ} \bQ^{n} + \tau \phi_1(-\tau \mLQ) \mNQ^n, \label{eq4_6}\\
    \bu^{n+1} &= e^{-\tau \mLu} \bu^{n} + \tau \phi_1(-\tau \mLu) \mNu^n,\label{eq4-7}\\
    \tilde{s}^{n+1} &= s^n + g^n \left( \left\langle \muQ^n, \bQ^{n+1} - \bQ^{n} \right\rangle_h + \left\langle \mu_{h}^n, \bu^{n+1} - \bu^{n} \right\rangle_h \right),\label{eq4-8}
\end{align}
where $\phi_1(z) = (e^z - 1)/z$, and $\tilde{s}^{n+1}$ is the provisional auxiliary variable.

 To enhance the stability, we can reformulate the above scheme into a  form by multiplying both sides of the equations for $\bQ^{n+1}$ and $\bu^{n+1}$ by $\frac{1}{\tau}(\mq(\tau \mLQ)+\tau \mLQ)$ and $\frac{1}{\tau}(\mq(\tau \mLu)+\tau \mLu)$, respectively, where $\mq(z) = z / (e^z - 1)$ is the inverse of $\phi_1(z)$.
we first obtain the following equivalent
\begin{align*}
    \frac{1}{\tau}(\mq(\tau \mLQ)+\tau \mLQ)=& \frac{1}{\tau} \left( \frac{\tau \mLQ}{e^{\tau \mLQ} - 1} + \tau \mLQ \right) = \frac{1}{\tau} \left( \frac{\tau \mLQ e^{\tau \mLQ}}{e^{\tau \mLQ} - 1} \right) .
\end{align*}
Then we can get the following key identity for the linear part:
\begin{align*}
    \frac{1}{\tau} \left( \frac{\tau \mLQ e^{\tau \mLQ}}{e^{\tau \mLQ} - 1} \right)*\tau \phi_1(-\tau \mLQ) =& \frac{1}{\tau} \left( \frac{\tau \mLQ e^{\tau \mLQ}}{e^{\tau \mLQ} - 1} \right) * \tau \left( \frac{e^{-\tau \mLQ} - 1}{-\tau \mLQ} \right) \\=& \frac{1}{\tau} \left( \frac{\tau \mLQ e^{\tau \mLQ}}{e^{\tau \mLQ} - 1} \right) * \tau \left( \frac{1 - e^{-\tau \mLQ}}{\tau \mLQ} \right) = 1.
\end{align*}
Then we have for the $\bQ^{n+1} - e^{-\tau \mLQ} \bQ^{n}$ part:
\begin{align*}
    \frac{1}{\tau}(\mq(\tau \mLQ)+\tau \mLQ) * (\bQ^{n+1} - e^{-\tau \mLQ} \bQ^{n})&=\frac{1}{\tau}\mq(\tau \mLQ)*\bQ^{n+1}+\tau \mLQ * \bQ^{n+1}-\frac{1}{\tau}\mq(\tau \mLQ)* \bQ^{n}\\
    &= \mq(\tau \mLQ) \frac{\bQ^{n+1} - \bQ^n}{\tau} + \mLQ \bQ^{n+1}.
\end{align*}
Then the above scheme can be equivalently reformulated as the following  scheme:
\begin{align}
    \mq(\tau \mLQ) \frac{\bQ^{n+1} - \bQ^n}{\tau} + \mLQ \bQ^{n+1} = \mNQ^n, \label{eq_scheme_Q}\\
    \mq(\tau \mLu) \frac{\bu^{n+1} - \bu^{n}}{\tau} + \mLu \bu^{n+1} = \mNu^n, \label{eq_scheme_u}\\
    \ts^{n+1} = s^n + g^n \left( \left\langle \muQ^n, \delta \bQ^{n+1} \right\rangle_h + \left\langle \mu_{h}^n, \delta \bu^{n+1} \right\rangle_h \right), \label{eq_scheme_s}
\end{align}
where $\mq(z) = z / ( e^{z} - 1)$, $\delta \bQ^{n+1} = \bQ^{n+1} - \bQ^{n}$, and $\delta \bu^{n+1} = \bu^{n+1} - \bu^{n}$.

To eliminate the long-term truncation error in $\tilde{s}^{n+1}$ while preserving unconditional energy stability, we introduce a continuous relaxation step (Relaxed SAV):
\begin{equation}\label{eq4-9}
    s^{n+1} = \xi^{n+1} \tilde{s}^{n+1} + (1 - \xi^{n+1}) E_{1h}^{n+1}, \quad \xi^{n+1} \in \mathcal{V}.
\end{equation}
The feasible set $\mathcal{V}$ is defined as $\mathcal{V} = \mathcal{V}_1 \cap \mathcal{V}_2$, with
\begin{align*}
    \mathcal{V}_1 &= \{ \xi \mid \xi \in [0, 1] \}, \label{eq4-10a}\\
    \mathcal{V}_2 &= \left\{ \xi \mid s^{n+1} - \tilde{s}^{n+1} \leqslant  \eta_0 \tau \mathcal{R}^{n+1}, \ s^{n+1} = \xi \tilde{s}^{n+1} + (1 - \xi) E_{1h}^{n+1} \right\}.
\end{align*}
Here, $\eta_0 \in [0, 1]$ is an artificial parameter manually assigned to reserve a portion of the energy dissipation, and $\mathcal{R}^{n+1}=\frac{1}{\tau}(\ibQ{\delta \bQ^{n+1}}+\ibu{\delta \bu^{n+1}})\geq 0 $ is the discrete energy dissipation rate.
\begin{remark}
To minimize the truncation error (i.e., making $s^{n+1}$ as close to $E_{1h}^{n+1}$ as possible, which implies minimizing $\xi$), the optimal choice for the relaxation parameter $\xi^{n+1}$ is determined by solving the following constrained optimization problem:
\begin{equation}
    \xi^{n+1} = \min \{ \xi \mid \xi \in [0, 1], \ f(\xi) \leqslant 0 \},
\end{equation}
where $f(\xi)$ is a linear function derived from the discrete energy dissipation law, given by:
\begin{equation}
    f(\xi) = (1 - \xi)(E_{1h}^{n+1} - \tilde{s}^{n+1}) - \eta_0 \tau \mathcal{R}^{n+1}.
\end{equation}

Since $f(\xi)$ is linear with respect to $\xi$, the optimal relaxation parameter $\xi^{n+1}$ can be explicitly and efficiently determined without solving a quadratic equation. Specifically, we have:
\begin{equation}\label{eq_xi_optimal}
    \xi^{n+1} =
    \begin{cases}
        0, & \text{if } E_{1h}^{n+1} \leqslant \tilde{s}^{n+1}, \\
        \max \left( 0, 1 - \frac{\eta_0 \tau \mathcal{R}^{n+1}}{E_{1h}^{n+1} - \tilde{s}^{n+1}} \right), & \text{if } E_{1h}^{n+1} > \tilde{s}^{n+1}.
    \end{cases}
\end{equation}
\end{remark}
\begin{remark}
The standard implicit backward Euler scheme is renowned for its well-established and elegant theoretical frameworks, particularly concerning the construction of appropriate test functions for error estimates and the rigorous derivation of energy dissipation laws. By employing the aforementioned system reformulation, our proposed scheme seamlessly inherits these analytical advantages. The profound mathematical benefits of this transformation will become persistently evident throughout the subsequent rigorous proofs.
\end{remark}
\section{Energy Dissipation Analysis}~\\
To prepare for the stability analysis, we introduce the modified discrete energy functionals and norms, along with a key lemma regarding their bounds and equivalences.

The total modified discrete energy functionals are given by
\ba\label{energy_discrete}
    \widetilde{\mathcal{E}}_h(\mathbf{Q}_h, u_h, \tilde{s}) &= \frac{K}{2} \|\nabla_h \mathbf{Q}_h\|_h^2  + B_0 \left\| \Delta_h u_h  \right\|_h^2+\tilde{s},\\
    \mathcal{E}_h(\mathbf{Q}_h, u_h,s) &= \frac{K}{2} \|\nabla_h \mathbf{Q}_h\|_h^2  + B_0 \left\| \Delta_h u_h  \right\|_h^2+s.
\ed
Since the operators $\mLQ$ and $\mLu$ are symmetric positive-definite and the functions $\mathcal{Q}(z)$, and $\mathcal{Q}_1(z)$ are non-negative for all $z \ge 0$, the resulting operators $\mathcal{Q}(\mLQ)$,  and $\mathcal{Q}_1(\mLQ)$ are also symmetric positive-definite. Based on this property, for a given time step size $\tau > 0$, we construct the following modified discrete norms for any $\mathbf{U} \in E_h^{\text{per}}$:
\ba\label{eq_inner_Q}
     \iaQ{\mathbf{U}} &:= \langle \mathcal{Q}(\tau \mathcal{L}_h) \mathbf{U}, \mathbf{U} \rangle_h,  \\
    \ibQ{\mathbf{U}} &:= \langle \mathcal{Q}_1(\tau \mathcal{L}_h) \mathbf{U}, \mathbf{U} \rangle_h = \langle \mathcal{Q}(\tau \mathcal{L}_h) \mathbf{U}, \mathbf{U} \rangle_h + \frac{\tau}{2} \langle \mathcal{L}_h \mathbf{U}, \mathbf{U} \rangle_h, \\
    \icQ{\mathbf{U}} &:=  \langle \mathcal{Q}(\tau \mathcal{L}_h)  \mathcal{L}_h   \mathbf{U}, \mathbf{U} \rangle_h,
\ed
where the functions $\mathcal{Q}(z)$,  and $\mathcal{Q}_1(z)$ are defined as follows:
\begin{equation}\label{eq_Q_functions}
    \mathcal{Q}(z) = \frac{z}{e^z - 1}\geq 0, \quad \mathcal{Q}_1(z) = \frac{z}{e^z - 1}+ \frac{z}{2}\geq 0, \quad \text{for}\  z\geq 0.
\end{equation}
The same definitions are extended to the grid functions $V \in E_h^{\text{per}}$ by incorporating the operator $\mathcal{D}_h$, yielding the norms denoted by $ \iau{V} =\langle \mathcal{Q}(\tau \mathcal{D}_h) V, V \rangle_h
$ and $\ibu{ V}=\langle \mathcal{Q}_1(\tau \mathcal{D}_h) V, V \rangle_h$.
\begin{lemma}\label{lem_norm_equivalence}
 For any grid function $\mathbf{U} \in \mathbf{S}_h^{(d)},V \in E_h^{\text{per}}$, the modified discrete norms satisfy the following bounds and equivalences:
\ba\label{eq_norm_bounds_1}
    \|\mathcal{Q}(\tau \mathcal{L}_h) \mathbf{U}\|_h^2 &\le \|\mathbf{U}\|_{\mathcal{Q}_\mathcal{L}}^2 \le \|\mathbf{U}\|_h^2 \le \|\mathbf{U}\|_{\mathcal{Q}_\mathcal{L}^1}^2,\\
    0 &\le \langle \mathcal{Q}(\tau \mathcal{L}_h)\mathcal{L}_h \mathbf{U}, \mathbf{U} \rangle_h \le  \langle \mathcal{L}_h \mathbf{U}, \mathbf{U} \rangle_h= \ilQ{\mathbf{U}},\\
      0 &\le \ibQ{\mathbf{U}} \le  \langle \mathcal{L}_h \mathbf{U}, \mathbf{U} \rangle_h= \ilQ{\mathbf{U}},\\
    \|\mathcal{Q}(\tau \mathcal{D}_h) V\|_h^2 &\le \|V\|_{\mathcal{Q}_\mathcal{D}}^2 \le \|V\|_h^2 \le \|V\|_{\mathcal{Q}_\mathcal{D}^1}^2,\\
    0 &\le \langle \mathcal{Q}(\tau \mathcal{D}_h)\mathcal{D}_h V, V \rangle_h \le  \langle \mathcal{D}_h V, V \rangle_h= \ilu{V},\\
      0 &\le \ibu{V} \le  \langle \mathcal{D}_h V, V \rangle_h= \ilu{V}.
\ed
and the same bounds and equivalences hold for $U \in E_h^{\text{per}}$ with the norms $\| \cdot \|_{\mathcal{Q}_\mathcal{D}}$ and $\| \cdot \|_{\mathcal{Q}_\mathcal{D}^1}$.
\end{lemma}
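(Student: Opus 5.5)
The plan is to diagonalize everything at once. Since $\mathcal{L}_h$ is built from $-K\Delta_h$ plus the positive multiple $g^n\kappa_1$ of the identity, it is self-adjoint with respect to $\langle\cdot,\cdot\rangle_h$ on $\mathbf{S}_h^{(d)}$ by the summation-by-parts identities. It therefore has an $\langle\cdot,\cdot\rangle_h$-orthonormal eigenbasis $\{\mathbf{W}_k\}$ with eigenvalues $\lambda_k\ge g^n\kappa_1>0$. On a periodic collocated grid one can take the discrete Fourier modes componentwise; these respect symmetry and tracelessness because $\Delta_h$ acts entrywise. The same holds for $\mathcal{D}_h$ on $E_h^{\text{per}}$, with eigenvalues $\nu_k\ge g^n\kappa_2>0$. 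Each operator function $\mathcal{Q}(\tau\mathcal{L}_h)$, $\mathcal{Q}_1(\tau\mathcal{L}_h)$, $\mathcal{Q}(\tau\mathcal{L}_h)\mathcal{L}_h$ acts diagonally in this basis. Writing $\mathbf{U}=\sum_k c_k\mathbf{W}_k$, every quantity in \eqref{eq_norm_bounds_1} becomes a weighted sum $\sum_k w(\lambda_k)c_k^2$. Each inequality then reduces to a pointwise inequality between scalar weights for $z=\tau\lambda_k>0$.

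The scalar facts I need are elementary. First, $0<\mathcal{Q}(z)=z/(e^z-1)\le 1$ for $z>0$, which follows from $e^z-1\ge z$. Consequently $\mathcal{Q}(z)^2\le \mathcal{Q}(z)$, since a number in $[0,1]$ dominates its square. Second, $\mathcal{Q}_1(z)=\mathcal{Q}(z)+z/2\ge 1$, because $\mathcal{Q}_1(z)=\tfrac z2\coth\tfrac z2$ and $x\coth x\ge 1$. These give the first line as the chain $\sum\mathcal{Q}^2c_k^2\le\sum\mathcal{Q}c_k^2\le\sum c_k^2\le\sum\mathcal{Q}_1c_k^2$. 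For the second line, multiplying $0\le\mathcal{Q}(\tau\lambda_k)\le 1$ by $\lambda_k>0$ gives $0\le \mathcal{Q}(\tau\lambda_k)\lambda_k\le\lambda_k$. Summing then yields $0\le\langle\mathcal{Q}(\tau\mathcal{L}_h)\mathcal{L}_h\mathbf{U},\mathbf{U}\rangle_h\le\langle\mathcal{L}_h\mathbf{U},\mathbf{U}\rangle_h$. The identification of the right-hand side with $\|\mathbf{U}\|^2_{\mathcal{L}_h}$ follows from summation by parts, $\langle -K\Delta_h\mathbf{U},\mathbf{U}\rangle_h=K\|\nabla_h\mathbf{U}\|_h^2$. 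Lines four to six are word-for-word the same argument with $\mathcal{D}_h$, $\nu_k$ and $\langle 2B_0\Delta_h^2V,V\rangle_h=2B_0\|\Delta_hV\|_h^2$ in place of $\mathcal{L}_h$, $\lambda_k$ and the gradient term.

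The step I expect to be the obstacle is the third (and sixth) line, where the weight must satisfy $0\le w(\tau\lambda)\le\lambda$ for \emph{every} $\tau>0$. With the weight $\mathcal{Q}_1(\tau\lambda)=\tfrac{\tau\lambda}2\coth\tfrac{\tau\lambda}2$, the upper bound fails for large $\tau$: the weight grows like $\tau\lambda/2$ and exceeds $\lambda$ once $\tau>2$. It can also fail for small $\lambda$, since $\mathcal{Q}_1\ge1$ while $\lambda$ may be as small as $g^n\kappa_1$. So the only weight that gives an unconditional upper bound by $\lambda$ is $\mathcal{Q}(\tau\lambda)\lambda$, which is exactly the $\mathcal{Q}^2_{\mathcal{L}}$ form $\langle\mathcal{Q}(\tau\mathcal{L}_h)\mathcal{L}_h\mathbf{U},\mathbf{U}\rangle_h$ from \eqref{eq_inner_Q}. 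I will therefore prove the third line in that form. The quantity sandwiched between $0$ and $\|\mathbf{U}\|^2_{\mathcal{L}_h}$ is $\|\mathbf{U}\|^2_{\mathcal{Q}^2_\mathcal{L}}$, and its bound is exactly the scalar inequality $0\le\lambda\mathcal{Q}(\tau\lambda)\le\lambda$ established for the second line. I will state explicitly in the proof that this is how the third line is being read. For the $\mathcal{Q}^1_\mathcal{L}$ norm itself, the lemma's content is the unconditional lower bound $\|\mathbf{U}\|_h^2\le\|\mathbf{U}\|^2_{\mathcal{Q}^1_\mathcal{L}}$ recorded in the first line.

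Finally, I will remark that the closing sentence of the lemma, which extends these bounds to scalar $U\in E_h^{\text{per}}$, needs no separate argument. The spectral argument used only self-adjointness and positivity of the underlying operator, so it applies verbatim to $\mathcal{D}_h$ and to $\mathcal{L}_h$ acting on scalar grid functions.
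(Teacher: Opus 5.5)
Your spectral reduction for lines one, two, four and five is the same argument the paper uses, and it is sound. The paper proves the scalar inequalities $0\le\mathcal{Q}(z)^2\le\mathcal{Q}(z)\le 1\le\mathcal{Q}_1(z)$ and $0\le z\mathcal{Q}(z)\le z$, then lifts them to the operators by functional calculus. Your diagnosis of lines three and six is also correct, and it exposes a real defect in the statement. Because $\mathcal{Q}_1(z)>z/2$, the bound $\|\mathbf{U}\|^2_{\mathcal{Q}^1_{\mathcal{L}}}\le\langle\mathcal{L}_h\mathbf{U},\mathbf{U}\rangle_h$ fails for every $\mathbf{U}\neq 0$ once $\tau\ge 2$. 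In addition, a spatially constant traceless $\mathbf{U}$ is an eigenvector with eigenvalue exactly $g^n\kappa_1$, and it violates the bound whenever $g^n\kappa_1<1$. The paper's proof does not get around this. Its chain $\langle\mathbf{U},\mathbf{U}\rangle_h+\frac{\tau}{2}\langle\mathcal{L}_h\mathbf{U},\mathbf{U}\rangle_h\le\frac12 g^n\kappa_1\langle\mathbf{U},\mathbf{U}\rangle_h+\frac12\langle\mathcal{L}_h\mathbf{U},\mathbf{U}\rangle_h$ silently assumes $g^n\kappa_1\ge 2$ and $\tau\le 1$, and likewise $g^n\kappa_2\ge 2$ for line six.

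What does not work is your remedy. Reading line three as a bound on $\langle\mathcal{Q}(\tau\mathcal{L}_h)\mathcal{L}_h\mathbf{U},\mathbf{U}\rangle_h$ just repeats line two. It discards the $\mathcal{Q}_1$ upper bound, and that bound is exactly what the paper uses later. For example, at the start of the proof of Theorem~\ref{thm:global_error}, the $\mathcal{Q}^1_{\mathcal{L}}$- and $\mathcal{Q}^1_{\mathcal{D}}$-norms of the error increments are bounded by their $H^1_h$ and $H^2_h$ norms. As written, your proposal therefore proves neither the lemma nor a version the rest of the paper can rely on.

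The honest repair is to keep $\mathcal{Q}_1$ and add the missing hypothesis. If $\tau\le 1$ and $g^n\kappa_1\ge 2$, then every eigenvalue $\lambda\ge g^n\kappa_1$ satisfies $\mathcal{Q}_1(\tau\lambda)=\mathcal{Q}(\tau\lambda)+\frac{\tau\lambda}{2}\le 1+\frac{\lambda}{2}\le\lambda$. The same holds for $\mathcal{D}_h$ when $g^n\kappa_2\ge 2$. Your $\coth$ formula even gives the sharp condition: $\frac{\tau\lambda}{2}\coth\frac{\tau\lambda}{2}\le\lambda$ holds on the whole spectrum if and only if $\tau<2$ and $g^n\kappa_1\ge\frac{1}{\tau}\ln\frac{2+\tau}{2-\tau}$. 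This threshold tends to $1$ as $\tau\to 0$. With such a hypothesis stated, your spectral argument proves the full lemma; without it, lines three and six are false and no argument can prove them.
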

\begin{proof}
Since $\mathcal{L}_h, \mathcal{D}_h$ are  symmetric positive semi-definite operators, all its discrete eigenvalues $z$ are non-negative. By analyzing the scalar generating functions defined in \eqref{eq_Q_functions}, it is straightforward to verify that for all $z \ge 0$:
\begin{align*}
    \mathcal{Q}(z) \le 1,\quad \mathcal{Q}_1(z) = \frac{z}{e^z - 1}+ \frac{z}{2}\geq 1.
\end{align*}
Then we can derive the following inequalities for the scalar functions $\mathcal{Q}(z)$,  and $\mathcal{Q}_1(z)$:
\begin{equation*}
    0 \le \mathcal{Q}^2(z) \le \mathcal{Q}(z) \le 1 \le \mathcal{Q}_1(z), \quad \text{and} \quad 0 \le \mathcal{Q}(z)z \le z.
\end{equation*}
Applying the spectral mapping theorem to the operator $\tau\mathcal{L}_h$ and $\tau\mathcal{D}_h$, these scalar inequalities naturally extend to the corresponding operator inequalities. For any $\mathbf{U} \in \mathbf{S}_h^{(d)}$, we have
\begin{align*}
 \ibQ{\mathbf{U}} &= \langle \mathcal{Q}(\tau \mathcal{L}_h) \mathbf{U}, \mathbf{U} \rangle_h + \frac{\tau}{2} \langle \mathcal{L}_h \mathbf{U}, \mathbf{U} \rangle_h \\
& \leq \langle  \mathbf{U}, \mathbf{U} \rangle_h + \frac{\tau}{2} \langle \mathcal{L}_h \mathbf{U}, \mathbf{U} \rangle_h\leq \frac{1}{2}g^n \kappa_1 \langle  \mathbf{U}, \mathbf{U} \rangle_h +  \frac{1}{2}\langle \mathcal{L}_h \mathbf{U}, \mathbf{U} \rangle_h\leq \langle \mathcal{L}_h \mathbf{U}, \mathbf{U} \rangle_h.
\end{align*}
And for any $V \in E_h^{\text{per}}$, we have
\begin{align*}
    \ibu{V} &= \langle \mathcal{Q}(\tau \mathcal{D}_h) V, V \rangle_h + \frac{\tau}{2} \langle \mathcal{D}_h V, V \rangle_h \\
    & \leq \langle  V, V \rangle_h + \frac{\tau}{2} \langle \mathcal{D}_h V, V \rangle_h\leq \frac{1}{2}g^n \kappa_2 \langle  V, V \rangle_h +  \frac{1}{2}\langle \mathcal{D}_h V, V \rangle_h\leq \langle \mathcal{D}_h V, V \rangle_h.
\end{align*}
\end{proof}
\begin{theorem}[Unconditional Energy Stability]\label{them3_1}
The proposed first-order  EOP-GSAV-EI  scheme with dual stabilization is unconditionally energy dissipative. That is, for any time step size $\tau > 0$, and stabilization parameters $\kappa_1, \kappa_2 > 0$,  the discrete modified energy satisfies the following dissipation law:
\ba
    \mathcal{E}_h(\bQ^{n+1}, \bu^{n+1}, s^{n+1}) \le \mathcal{E}_h(\bQ^{n}, \bu^{n}, s^n),\quad \forall n \ge 0,
\ed
where $\mathcal{E}_h$ is defined as  \eqref{energy_discrete}.
\end{theorem}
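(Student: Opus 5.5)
The plan is to exploit the backward-Euler-type reformulation \eqref{eq_scheme_Q}--\eqref{eq_scheme_s} rather than the exponential form \eqref{eq4_6}--\eqref{eq4-8}. The proof has two stages. First, we show that the provisional energy $\widetilde{\mathcal{E}}_h(\bQ^{n+1},\bu^{n+1},\tilde s^{n+1})$ is bounded by $\mathcal{E}_h(\bQ^{n},\bu^{n},s^n)$ minus a nonnegative dissipation term. Second, we show that the relaxation step \eqref{eq4-9} with $\xi^{n+1}\in\mathcal{V}$ raises the energy by at most a reserved fraction of that dissipation.

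\textbf{Stage one: the provisional energy estimate.} We begin by cancelling the stabilization terms. Since $\mLQ \bQ^{n+1}=-K\Delta_h\bQ^{n+1}+g^n\kappa_1\bQ^{n+1}$ and $\mNQ^n=-g^n\muQ^n+g^n\kappa_1\bQ^n$, equation \eqref{eq_scheme_Q} becomes
\begin{equation*}
\mathcal{Q}(\tau\mLQ)\frac{\delta\bQ^{n+1}}{\tau}+g^n\kappa_1\,\delta\bQ^{n+1}-K\Delta_h\bQ^{n+1}=-g^n\muQ^n .
\end{equation*}
In the same way, \eqref{eq_scheme_u} becomes
\begin{equation*}
\mathcal{Q}(\tau\mLu)\frac{\delta\bu^{n+1}}{\tau}+g^n\kappa_2\,\delta\bu^{n+1}+2B_0\Delta_h^2\bu^{n+1}=-g^n\mu_h^n .
\end{equation*}
Next we take the discrete inner product of the first equation with $\delta\bQ^{n+1}$ and of the second with $\delta\bu^{n+1}$. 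Summation by parts and the identity $2a(a-b)=a^2-b^2+(a-b)^2$ give
\begin{equation*}
\langle -K\Delta_h\bQ^{n+1},\delta\bQ^{n+1}\rangle_h=\frac K2\left(\|\nabla_h\bQ^{n+1}\|_h^2-\|\nabla_h\bQ^{n}\|_h^2+\|\nabla_h\delta\bQ^{n+1}\|_h^2\right).
\end{equation*}
Likewise, $\langle 2B_0\Delta_h^2\bu^{n+1},\delta\bu^{n+1}\rangle_h$ equals $B_0\big(\|\Delta_h\bu^{n+1}\|_h^2-\|\Delta_h\bu^n\|_h^2+\|\Delta_h\delta\bu^{n+1}\|_h^2\big)$. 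These are exactly the quadratic parts of $\mathcal{E}_h$ in \eqref{energy_discrete}.

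By \eqref{eq_scheme_s}, the right-hand sides sum to $-(\tilde s^{n+1}-s^n)$. Adding the two tested equations therefore yields
\begin{equation*}
\widetilde{\mathcal{E}}_h^{n+1}-\mathcal{E}_h^n=-\frac1\tau\iaQ{\delta\bQ^{n+1}}-g^n\kappa_1\|\delta\bQ^{n+1}\|_h^2-\frac K2\|\nabla_h\delta\bQ^{n+1}\|_h^2-\frac1\tau\iau{\delta\bu^{n+1}}-g^n\kappa_2\|\delta\bu^{n+1}\|_h^2-B_0\|\Delta_h\delta\bu^{n+1}\|_h^2 .
\end{equation*}
All terms on the right are nonpositive. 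Here $\mathcal{Q}(\tau\mLQ)\ge0$ and $\mathcal{Q}(\tau\mLu)\ge0$ by Lemma \ref{lem_norm_equivalence}, and $g^n>0$.

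We then compare this with $\mathcal{R}^{n+1}$. By \eqref{eq_inner_Q},
\begin{equation*}
\frac1\tau\ibQ{\delta\bQ^{n+1}}=\frac1\tau\iaQ{\delta\bQ^{n+1}}+\frac K2\|\nabla_h\delta\bQ^{n+1}\|_h^2+\frac{g^n\kappa_1}{2}\|\delta\bQ^{n+1}\|_h^2 .
\end{equation*}
This is dominated termwise by the $\bQ$-part of the dissipation above, and the analogous statement holds for $\bu$. Hence $\widetilde{\mathcal{E}}_h^{n+1}\le\mathcal{E}_h^n-\mathcal{R}^{n+1}$.

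\textbf{Stage two: the relaxation step.} The quadratic parts of $\mathcal{E}_h^{n+1}$ and $\widetilde{\mathcal{E}}_h^{n+1}$ coincide, so $\mathcal{E}_h^{n+1}-\widetilde{\mathcal{E}}_h^{n+1}=s^{n+1}-\tilde s^{n+1}$. The constraint $\mathcal{V}_2$ bounds this difference by the reserved portion of dissipation. The set $\mathcal{V}$ is nonempty because $\xi=1$ gives $s^{n+1}=\tilde s^{n+1}$, so the explicit choice \eqref{eq_xi_optimal} is admissible. Combining the two stages gives $\mathcal{E}_h^{n+1}\le\mathcal{E}_h^n-(1-\eta_0)\,\mathcal{R}^{n+1}\le\mathcal{E}_h^n$, since $\eta_0\le1$.

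\textbf{Expected obstacle.} The main delicate point is matching the time-step scaling in $\mathcal{V}_2$ with the dissipation actually produced. The one-step dissipation derived above is $\frac1\tau\|\cdot\|^2$-type, which is what $\mathcal{R}^{n+1}$ itself equals. The constraint, however, is written as $\eta_0\tau\mathcal{R}^{n+1}$. I would first check the intended normalization, i.e.\ whether the one-step dissipation should be read as $\tau\mathcal{R}^{n+1}$ with $\mathcal{R}$ a rate. If the scaling is taken literally, then $\eta_0\tau\le 1$ is needed for the final inequality. Otherwise, one can read $\mathcal{V}_2$ with the same normalization as the dissipation identity, in which case the argument closes for every $\tau>0$.
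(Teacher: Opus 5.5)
Your proposal is correct and follows essentially the same two-stage argument as the paper: you test the backward-Euler-type reformulation \eqref{eq_scheme_Q}--\eqref{eq_scheme_u} with the increments to get $\widetilde{\mathcal{E}}_h^{n+1}\le\mathcal{E}_h^n-\mathcal{R}^{n+1}$, and then you control the relaxation jump $s^{n+1}-\tilde{s}^{n+1}$ through $\mathcal{V}_2$. The scaling mismatch you flag is real and the paper does not resolve it: its proof bounds the jump by $(1-\eta_0)\tau\mathcal{R}^{n+1}$ and then asserts $-\mathcal{R}^{n+1}+(1-\eta_0)\tau\mathcal{R}^{n+1}=-\eta_0\mathcal{R}^{n+1}$, which fails in general unless $\tau=1$, so your caveat is exactly what the claim needs to hold for every $\tau>0$ (either $\eta_0\tau\le 1$, or the constraint in $\mathcal{V}_2$ normalized as $\eta_0\mathcal{R}^{n+1}$).
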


\begin{proof}
Taking the inner product of equations (\ref{eq_scheme_Q}) and (\ref{eq_scheme_u}) with $\delta \bQ^{n+1}$ and $\delta \bu^{n+1}$, respectively, we obtain:
\ba \label{eq34}
    \langle \mLQ\bQ^{n+1}, \delta \bQ^{n+1} \rangle_h &=-\frac{1}{\tau}\langle \mq(\tau \mLQ) \delta \bQ^{n+1}, \delta \bQ^{n+1} \rangle_h+\id{\mNQ^n}{\delta \bQ^{n+1}}
    \\&= -\frac{1}{\tau}\iaQ{\delta \bQ^{n+1}} + g^n \kappa_1 \langle \mathbf{Q}_h^n, \delta \bQ^{n+1} \rangle_h - g^n \langle \muQ^n, \delta \bQ^{n+1} \rangle_h, \\
    \langle \mLu\bu^{n+1}, \delta \bu^{n+1} \rangle_h &= -\frac{1}{\tau}\langle \mq(\tau \mLu) \delta \bu^{n+1}, \delta \bu^{n+1} \rangle_h + \langle \mNu^n, \delta \bu^{n+1} \rangle_h \\
    &=-\frac{1}{\tau}\iau{\delta \bu^{n+1}} + g^n \kappa_2 \langle u_h^n, \delta \bu^{n+1} \rangle_h - g^n \langle \mu_h^n, \delta \bu^{n+1} \rangle_h,
\ed
Then using the standard algebraic identity, we evaluate the difference between the provisional modified energy $\widetilde{\mathcal{E}}^{n+1}$ and the energy $\mathcal{E}^n$:
\begin{align}
    \widetilde{\mathcal{E}}^{n+1} - {\mathcal{E}}^n &= \frac{1}{2}\ia{\nabla_h  \bQ^{n+1} }- \frac{1}{2}\ia{\nabla_h \bQ^{n}} + \frac{1}{2}\ia{\Delta_h\bu^{n+1}} - \frac{1}{2}\ia{\Delta_h\bu^{n}}  + \widetilde{s}^{n+1} - {s}^n \no\\
    &=  \id{\mLQ\bQ^{n+1}}{\delta \bQ^{n+1}}  - \frac{1}{2}\|\nabla_h \delta \bQ^{n+1}\|_h^2 -g^n \kappa_1 \id{ \mathbf{Q}_h^{n+1}}{ \delta \bQ^{n+1}} + \langle \mLu\bu^{n+1}, \delta \bu^{n+1} \rangle_h \no\\
    &- \frac{1}{2} \ia{\Delta_h \delta \bu^{n+1}} - g^n \kappa_2 \langle u_h^{n+1}, \delta \bu^{n+1} \rangle_h + g^n \left( \langle \muQ^n, \delta \bQ^{n+1} \rangle_h + \langle \mu_h^n, \delta \bu^{n+1} \rangle_h \right).\label{eq_diff_E}
\end{align}
Substituting \eqref{eq34} back into (\ref{eq_diff_E}) and using Lemma~\ref{lem_norm_equivalence}, we arrive at the following key inequality:
\begin{equation}
\begin{aligned}
    {\widetilde{\mathcal{E}}}^{n+1} - {\mathcal{E}}^n &= -\frac{1}{\tau}\iaQ{\delta \bQ^{n+1}} - g^n \kappa_1 \|\delta \bQ^{n+1}\|_h^2 - \frac{1}{2}\|\nabla_h \delta \bQ^{n+1}\|_h^2 \\
    &\quad  -\frac{1}{\tau}\iau{\delta \bu^{n+1}} - g^n \kappa_2 \|\delta \bu^{n+1}\|_h^2 - \frac{1}{2}\|\Delta_h \delta \bu^{n+1}\|_h^2 \le 0.
\end{aligned}
\end{equation}
Substituting \eqref{eq34} back into (\ref{eq_diff_E}) and using Lemma~\ref{lem_norm_equivalence}, we arrive at the following key inequality for the provisional modified energy:
\begin{equation}\label{eq_proof1}
\begin{aligned}
    {\widetilde{\mathcal{E}}}^{n+1} - \mathcal{E}^n &= {-\frac{1}{\tau}\iaQ{\delta \bQ^{n+1}} - g^n \kappa_1 \|\delta \bQ^{n+1}\|_h^2 - \frac{1}{2}\|\nabla_h \delta \bQ^{n+1}\|_h^2}\\
    &\quad  {-\frac{1}{\tau}\iau{\delta \bu^{n+1}} - g^n \kappa_2 \|\delta \bu^{n+1}\|_h^2 - \frac{1}{2}\|\Delta_h \delta \bu^{n+1}\|_h^2}\\
    & \leq -\frac{1}{\tau}\ibQ{\delta \bQ^{n+1}}-\frac{1}{\tau}\ibu{\delta \bu^{n+1}}
    = -\mathcal{R}^{n+1} \le 0.
\end{aligned}
\end{equation}

Next, we evaluate the continuous relaxation corrector. By the definition of the modified energy and feasible set $\mathcal{V}_2$, the difference between the final updated energy $\mathcal{E}^{n+1}$ and the provisional energy $\widetilde{\mathcal{E}}^{n+1}$ stems entirely from the modification of the scalar auxiliary variable:
\begin{equation}\label{eq_proof2}
    \mathcal{E}^{n+1} - \widetilde{\mathcal{E}}^{n+1} = s^{n+1} - \tilde{s}^{n+1}\le (1 - \eta_0)\tau \mathcal{R}^{n+1}.
\end{equation}
Adding \eqref{eq_proof1},  and \eqref{eq_proof2} together, the provisional energy terms cancel out perfectly, and we obtain the global energy dissipation law:
\begin{equation}\label{eq_proof4}
    \mathcal{E}^{n+1} - \mathcal{E}^n \le -\mathcal{R}^{n+1} + (1 - \eta_0)\tau\mathcal{R}^{n+1} = -\eta_0 \mathcal{R}^{n+1} \le 0.
\end{equation}
Thus, the modified energy rigorously dissipates at every time step, completely independent of the exact value of the dynamic parameter $\xi^{n+1}$. This completes the proof.
\end{proof}

\begin{lemma}[Uniform Bounds for $g^n$]\label{gn_bound0}
On the basis of the energy stability in Theorem \ref{them3_1},
 there exists a positive constant $G^*$ independent of $\tau$ and $n$ such that the scalar factor $g^n$ defined in \eqref{eq:g_n} satisfies the following uniform bounds for all time steps:
\begin{equation}
    0 \le g^n = \frac{\exp(s^n)}{\exp(E_{1h}^{n+1})} \le G^*< \infty, \quad \forall 0 \le n \le T/\tau.
\end{equation}
\end{lemma}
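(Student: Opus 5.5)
Since $g^n = \exp(s^n - E_{1h}^{n})$ (the statement writes $E_{1h}^{n+1}$; the same argument covers that indexing once the bounds below are uniform in $n$), the lower bound $g^n \ge 0$ is immediate from positivity of the exponential. The plan for the upper bound is to show that $s^n - E_{1h}^n$ is bounded above uniformly in $n$ and $\tau$. Equivalently, we need an upper bound on $s^n$ and a lower bound on $E_{1h}^n$, both independent of $\tau$ and $n$.

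\textbf{Upper bound on $s^n$.} Apply Theorem~\ref{them3_1} iteratively to get $\mathcal{E}_h(\bQ^{n},\bu^{n},s^{n}) \le \mathcal{E}_h(\bQ^{0},\bu^{0},s^{0})$. The initial value satisfies $s^0 = E_{1h}^0$, which is finite since the initial data are smooth. The quadratic parts $\frac{K}{2}\|\nabla_h \bQ^n\|_h^2 + B_0\|\Delta_h \bu^n\|_h^2$ are nonnegative, so
\begin{equation*}
s^n \le \mathcal{E}_h^0 \quad\text{and}\quad \frac{K}{2}\|\nabla_h \bQ^n\|_h^2 + B_0\|\Delta_h \bu^n\|_h^2 \le \mathcal{E}_h^0 - s^n .
\end{equation*}

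\textbf{Lower bound on $E_{1h}^n$.} The idea is that $E_{1h}^n$ is bounded below uniformly over all grid functions, so no a priori control of $\bQ^n,\bu^n$ is needed. The quartic terms dominate:
\begin{itemize}
\item $\frac{C}{4}(\mathrm{tr}\,\mathbf{Q}^2)^2$ dominates the terms $\frac{A}{2}\mathrm{tr}\,\mathbf{Q}^2$ and $-\frac{B}{3}\mathrm{tr}\,\mathbf{Q}^3$ pointwise;
\item $\frac{c}{4}u^4$ dominates $\frac{a}{2}u^2 + \frac{b}{3}u^3$.
\end{itemize}
Hence $f_{bn}\ge -c_1$ and $f_s\ge -c_2$, and summing over the bounded domain gives constants independent of $h$.

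The coupling terms are the delicate part. We have $2B_0q^2\langle D_h^2u,Mu\rangle_h + B_0q^4\|Mu\|_h^2$, and the cross term is sign-indefinite. The plan is to complete the square:
\begin{equation*}
2B_0q^2\langle D_h^2u,Mu\rangle_h + B_0q^4\|Mu\|_h^2 = B_0\|D_h^2u + q^2Mu\|_h^2 - B_0\|D_h^2u\|_h^2 \ge -B_0\|D_h^2u\|_h^2 .
\end{equation*}
The remaining term $-B_0\|D_h^2 u\|_h^2$ has to be absorbed using the energy bound. Discrete summation by parts on the periodic grid gives $\|D_h^2u\|_h \lesssim \|\Delta_h u\|_h$, which is controlled by $\mathcal{E}_h^0 - s^n$.

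\textbf{Main obstacle: the circularity.} Combining the steps gives
\begin{equation*}
E_{1h}^n \ge -c_1 - c_2 - C_0(\mathcal{E}_h^0 - s^n),
\end{equation*}
so that
\begin{equation*}
s^n - E_{1h}^n \le s^n + c_1 + c_2 + C_0(\mathcal{E}_h^0 - s^n) .
\end{equation*}
If $C_0 \le 1$, which depends on the constant comparing $\|D_h^2u\|_h$ with $\|\Delta_h u\|_h$ against the weight $B_0$, the right-hand side is bounded by a constant times $\mathcal{E}_h^0$ plus constants, since $s^n \le \mathcal{E}_h^0$. That closes the argument.

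If instead $C_0>1$, the bound requires a lower bound on $s^n$. I would try to obtain it from the relaxation step: $s^{n+1}$ is a convex combination of $\tilde{s}^{n+1}$ and $E_{1h}^{n+1}$. An induction would then give $s^n \ge -C_*$, with the discrete analogue of $-C_*\le E_1$ taken from the uniform $H^2$ bounds of the exact solution stated before \eqref{eq_modified_system}. This consistency assumption, namely that the numerical solution inherits those bounds, is the step I expect to be weakest. Once it holds, $G^* = \exp\bigl(C(\mathcal{E}_h^0 + C_* + c_1 + c_2)\bigr)$.
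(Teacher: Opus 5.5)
Your main line is correct, and it differs from the paper's route. The point you left open, whether $C_0\le 1$, does hold. The completed square and the modified energy $\mathcal{E}_h$ carry the same weight $B_0$, so $C_0\le1$ reduces to $\|D_h^2u\|_h\le\|\Delta_hu\|_h$ on the periodic grid. Take $D_h^2u=(D_k^cD_l^cu)_{k,l}$. The $D_k^c$ commute and are skew-adjoint, so summation by parts gives $\sum_{k,l}\|D_k^cD_l^cu\|_h^2=\|\sum_k(D_k^c)^2u\|_h^2$. The symbol of $(D_k^c)^2$ is $-\sin^2(\xi_kh)/h^2$ and that of $D_k^+D_k^-$ is $-4\sin^2(\xi_kh/2)/h^2$. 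Both are nonpositive and $\sin^2(\xi h)\le 4\sin^2(\xi h/2)$, so Parseval gives $\|D_h^2u\|_h\le\|\Delta_hu\|_h$; the same holds if the diagonal uses $D_k^+D_k^-$. Hence the unmodified discrete energy $\frac K2\|\nabla_h\bQ^n\|_h^2+B_0\|\Delta_h\bu^n\|_h^2+E_{1h}^n$ is at least $-c_1-c_2$. Subtracting it from $\mathcal{E}_h(\bQ^n,\bu^n,s^n)\le\mathcal{E}_h^0$ gives $s^n-E_{1h}^n\le\mathcal{E}_h^0+c_1+c_2$, so $G^*=\exp(\mathcal{E}_h^0+c_1+c_2)$. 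Since $s^n$ cancels, no lower bound on $s^n$ is needed. Drop the $C_0>1$ branch. Besides being unnecessary it would not work: $\tilde s^{n+1}$ has no a priori lower bound, and a lower bound on $E_{1h}$ along the numerical solution cannot be borrowed from the exact solution without an error estimate, which itself presupposes control of $g^n$.

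The paper bounds $s^n\le\mathcal{E}_h^0$ as you do. It then bounds $E_{1h}^n$ from below separately, using Cauchy--Schwarz on the coupling terms and uniform $\ell^\infty$ bounds on $\bQ^n,\bu^n$ that it attributes to Lemma~\ref{lem3_2}. That lemma is proved later, and its proof invokes a uniform bound on $\mathbf{N}_h^n=-g^n\mathbf{H}_h^n+g^n\kappa_1\bQ^n$, so as written that route relies on control of $g^n$. Yours uses only Theorem~\ref{them3_1} and the square structure of $f_{int}$. What the paper's route would give, if those a priori bounds were available independently, is separate two-sided control $-C_*\le E_{1h}^n\le C^*$. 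That makes the time index irrelevant and supplies the $C^*$ reused for $G_*$ in Lemma~\ref{gn_bound}. Your estimate controls only the difference $s^n-E_{1h}^n$ at equal indices, so your parenthetical claim about the statement's $E_{1h}^{n+1}$ is not justified. With that indexing you would be left with $s^n-s^{n+1}$, which energy dissipation does not bound. Your argument proves the bound for $g^n$ as defined in \eqref{eq:g_n}, which is the quantity the scheme actually uses.
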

\begin{proof}
On the basis of
 Theorem \ref{them3_1}, we have for any $n \ge 1$:
\begin{align*}
   s^n \le \mathcal{E}_h(\bQ^n, \bu^n, s^n) &\le \mathcal{E}_h(\bQ^0, \bu^0, s^0) = \frac{K}{2} \|\nabla_h \bQ^0\|_h^2 + B_0 \|\Delta_h \bu^0\|_h^2 + s^0 := C_0.
\end{align*}

 Applying the higher-order estimates established in Theorem \ref{lem3_2} (which provides uniform bounds such as $\|u_h^n\|_{\infty_h} \le C$ and $\|\mathbf{Q}_h^n\|_{\infty_h} \le C$) to the discrete nonlinear energy parts $E_{1h}^{n+1}$, we can apply the discrete Cauchy-Schwarz inequality to deduce that the following bounds:
\begin{align*}
    \left| \langle D_h^2 u_h^n , M_h^n u_h^n \rangle_h \right| &\le \|D_h^2 u_h^n\|_h \|M_h^n\|_{\infty_h} \|u_h^n\|_h \le C, \\
    \| M_h^n u_h^n \|_h^2 &\le \|M_h^n\|_{\infty_h}^2 \|u_h^n\|_h^2 \le C.
\end{align*}
Since the discrete bulk potentials $\langle f_{bn}(\mathbf{Q}_h^n), 1 \rangle_h$ and $\langle f_{s}(u_h^n), 1 \rangle_h$ are bounded from below by their physical definitions, and bounded from above due to the uniform $\infty_h$ bounds on the state variables, we deduce that there exist positive constants $C_*$ and $C^*$ (independent of the spatial step $h$ and time step $\tau$) such that the exact discrete energy $E_{1h}^n$ satisfies:
\begin{equation}\label{eq_E1_bounds}
    -C_* \le E_{1h}^n \le C^*.
\end{equation}
Combining the above bounds for $s^n$ and $E_{1h}^{n+1}$, we have:
\begin{equation}
    0 \le g^n = \frac{\exp(s^n)}{\exp(E_{1h}^{n+1})} \le \frac{\exp(C_0)}{\exp(-C_*)} = G^* < \infty,
\end{equation}which completes the proof.
\end{proof}

\begin{lemma}\label{lem3_2}
    On the basis of the energy stability established in Theorem \ref{them3_1}, there exist positive constants $C$ and $C'$ independent of $\tau$ and $n$ such that the following higher-order estimates hold for the discrete solutions $\bQ^{n}$ and $\bu^{n}$:
 \begin{align}
    \max_{0 \le n \le N-1} \norm{\nabla \bQ^{n+1}}_h^2 + \tau \sum_{k=0}^{n} \left(  \ibQ{\dtau \bQ^{k+1}} + \norm{\Delta \bQ^{k+1}}_h^2 \right) \le C,\\
    \max_{0 \le n \le N-1} \norm{\Delta \bu^{n+1}}_h^2 + \tau \sum_{k=0}^{n} \left(  \ibu{\dtau \bu^{k+1}} + \norm{\Delta^2 \bu^{k+1}}_h^2 \right) \le C',
\end{align}
where $C$ and $C'$ are constants independent of the time step size $\tau$ and the number of time steps $N$.
\end{lemma}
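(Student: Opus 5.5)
The plan is to derive the zeroth-order bounds from Theorem~\ref{them3_1}, and then to obtain the higher-order bounds by testing the backward-Euler-type reformulations \eqref{eq_scheme_Q}--\eqref{eq_scheme_u} with $-\Delta_h$-type test functions.

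\textbf{Step 1: bounds from the energy law.} Summing the dissipation law \eqref{eq_proof4} from $k=0$ to $n$ gives
\[
\mathcal{E}_h^{n+1}+\eta_0\sum_{k=0}^n\mathcal{R}^{k+1}\le \mathcal{E}_h^0 .
\]
Before this yields anything, $s^{n+1}$ must be bounded from below. By the relaxation \eqref{eq4-9}, $s^{n+1}$ is a convex combination of $\tilde s^{n+1}$ and $E_{1h}^{n+1}$. The lower bound $E_{1h}^{n+1}\ge -C_*$ comes from the positivity of the quartic terms in $f_{bn}$ and $f_s$ together with Young's inequality on the coupling term, which absorbs $|\langle D_h^2u,Mu\rangle|$ into $\tfrac12 B_0\|\Delta_h u\|^2$ plus lower-order terms. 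With $s^{n+1}$ bounded below, we get
\[
\max_n\big(\|\nabla_h\bQ^{n+1}\|_h^2+\|\Delta_h\bu^{n+1}\|_h^2\big)\le C
\]
and, since $\tau\mathcal R^{k+1}=\ibQ{\delta\bQ^{k+1}}+\ibu{\delta\bu^{k+1}}$, also $\tau\sum_k\big(\ibQ{\dtau\bQ^{k+1}}+\ibu{\dtau\bu^{k+1}}\big)\le C$. This step requires $\eta_0>0$. An $L^2$ bound on $\bQ^n,\bu^n$ follows from the $L^4$ control in $f_{bn},f_s$. Discrete Sobolev embedding in $d\le3$ then gives $\|\bu^n\|_\infty\le C$, and an $L^6$ bound on $\bQ^n$.

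\textbf{Step 2: test the $\mathbf Q$-equation with $-\Delta_h\bQ^{n+1}$.} Take the inner product of \eqref{eq_scheme_Q} with $-\Delta_h\bQ^{n+1}$. The operator $\mq(\tau\mLQ)$ commutes with $\Delta_h$, so writing $-\Delta_h\bQ^{n+1}=-\Delta_h\bQ^n-\Delta_h\delta\bQ^{n+1}$ turns the first term into a telescoping difference of $\langle\mq(\tau\mLQ)\nabla_h\cdot,\nabla_h\cdot\rangle_h$ plus a nonnegative remainder. The term $\langle\mLQ\bQ^{n+1},-\Delta_h\bQ^{n+1}\rangle_h$ produces $K\|\Delta_h\bQ^{n+1}\|_h^2+g^n\kappa_1\|\nabla_h\bQ^{n+1}\|_h^2$. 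On the right, $\langle\mNQ^n,-\Delta_h\bQ^{n+1}\rangle_h$ is bounded by $\tfrac K2\|\Delta_h\bQ^{n+1}\|^2+C\|\mNQ^n\|_h^2$, using $g^n\le G^*$ from Lemma~\ref{gn_bound0}. The remaining task is to bound $\|\muQ^n\|_h$. The cubic terms are controlled through $\|\bQ^n\|_{L^6}$. The coupling terms $u D_h^2u$ and $\bQ u^2$ are controlled by $\|\bu^n\|_\infty\|\Delta_h\bu^n\|$ (using that $\|D_h^2 u\|\lesssim\|\Delta_hu\|$ on the periodic grid).

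\textbf{Step 3: test the $u$-equation with $\Delta_h^2\bu^{n+1}$.} Do the same for \eqref{eq_scheme_u} with test function $\Delta_h^2\bu^{n+1}$, which yields $2B_0\|\Delta_h^2\bu^{n+1}\|^2$. The hardest term here is $\nabla_h\cdot(\nabla_h\cdot(M_h^n\bu^n))$ inside $\mu_h^n$. It contains second derivatives of $\bQ^n$, so $\|\mNu^n\|_h$ must be estimated by roughly
\[
C\big(1+\|\Delta_h\bQ^n\|_h+\|\nabla_h\bQ^n\|_{L^4}\|\nabla_h\bu^n\|_{L^4}\big).
\]
The $\|\Delta_h\bQ^n\|_h^2$ contribution is then summable precisely because of Step 2. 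I would therefore add the two tested inequalities, treating the coupled quantity
\[
\Phi^n=\|\nabla_h\bQ^n\|^2_{\mq}+\|\Delta_h\bu^n\|^2_{\mq}
\]
(the $\mq$-weighted norms from the telescoping in Step 2), and close with a discrete Gronwall inequality.

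\textbf{Main obstacle.} The main obstacle is the $L^\infty$ (or $L^4$) bound on $\bQ^n$ needed inside $M_h^n$. The energy law only gives $H^1$, which does not embed into $L^\infty$ in 3D, and the circularity with Lemma~\ref{gn_bound0} must be avoided. I would first try a Gagliardo–Nirenberg interpolation,
\[
\|\bQ\|_\infty\lesssim\|\bQ\|_{H^1}^{1/2}\|\Delta_h\bQ\|^{1/2},
\]
absorb the result with Young's inequality into the $K\|\Delta_h\bQ^{n+1}\|^2$ dissipation, and accept a Gronwall constant depending on $T$. If that fails, I would fall back on an induction hypothesis $\|\bQ^n\|_\infty\le C$ that is justified a posteriori by the error estimate.
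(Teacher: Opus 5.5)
Your route differs from the paper's. The paper does not use the summed energy law here. It tests \eqref{eq_scheme_Q} with $2\delta_\tau\mathbf{Q}_h^{n+1}$, which telescopes $\|\mathbf{Q}_h^{n+1}\|_{\mathcal{L}_h}^2=K\|\nabla_h\mathbf{Q}_h^{n+1}\|_h^2+g^n\kappa_1\|\mathbf{Q}_h^{n+1}\|_h^2$ and yields $\tau\sum_k\|\delta_\tau\mathbf{Q}_h^{k+1}\|_{\mathcal{Q}_{\mathcal{L}}^1}^2$ for every $\eta_0\in[0,1]$; your Step 1 degenerates as $\eta_0\to0$. It then reads $\|\mathcal{L}_h\mathbf{Q}_h^{n+1}\|_h$ off the equation itself via $\|\mathcal{Q}(\tau\mathcal{L}_h)\delta_\tau\mathbf{Q}_h^{n+1}\|_h\le\|\delta_\tau\mathbf{Q}_h^{n+1}\|_{\mathcal{Q}_{\mathcal{L}}^1}$, so no second test function is needed. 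Your test functions $-\Delta_h\mathbf{Q}_h^{n+1}$ and $\Delta_h^2u_h^{n+1}$ are a legitimate alternative for the dissipative sums. Your treatment of the double-divergence term through $\tau\sum_k\|\Delta_h\mathbf{Q}_h^k\|_h^2$ is the same idea as the remark after the lemma. Both arguments, however, rest on a uniform bound for $\|\mathbf{N}_h^n\|_h$. The paper only asserts this in \eqref{align7_28} ``on the basis of Theorem~\ref{them3_1}'', and your Step 1, which is meant to supply it, does not.

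The gap is in Step 1, in two places.

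\textbf{Lower bound on $s^{n+1}$.} \eqref{eq4-9} only gives $s^{n+1}\ge\min\{\tilde s^{n+1},E_{1h}^{n+1}\}$. By \eqref{eq_xi_optimal}, whenever $E_{1h}^{n+1}>\tilde s^{n+1}$ one has $s^{n+1}=\min\{E_{1h}^{n+1},\tilde s^{n+1}+\eta_0\tau\mathcal{R}^{n+1}\}$. So a lower bound on $E_{1h}^{n+1}$ does not help; you need one on $\tilde s^{n+1}=s^n+g^n\big(\langle\mathbf{H}_h^n,\delta\mathbf{Q}_h^{n+1}\rangle_h+\langle\mu_h^n,\delta u_h^{n+1}\rangle_h\big)$. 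That requires exactly the bounds on $\mathbf{H}_h^n,\mu_h^n$ you are trying to prove. The paper gets $s^n\ge -C$ only afterwards, in Lemma~\ref{gn_bound}, by using this very lemma. The bound $g^n\le G^*$ you invoke in Step 2 hinges on the same point. Lemma~\ref{gn_bound0} is itself proved via this lemma, and directly one only has $E_{1h}^n\ge -B_0\|D_h^2u_h^n\|_h^2-C$, which already needs $\|\Delta_hu_h^n\|_h$ bounded.

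\textbf{No zeroth-order control.} The modified energy \eqref{energy_discrete} carries the scalar $s$ in place of $\langle f_{bn}(\mathbf{Q}_h),1\rangle_h+\langle f_s(u_h),1\rangle_h$. So the energy law gives no $L^4$ (or $L^2$) control of $\mathbf{Q}_h^n$ or $u_h^n$. On the periodic grid, $\|\nabla_h\mathbf{Q}_h^n\|_h$ and $\|\Delta_hu_h^n\|_h$ do not see the mean modes. Hence neither the $L^6$ bound on $\mathbf{Q}_h^n$ nor $\|u_h^n\|_\infty\le C$ is available, and Step 2 cannot bound $\|\mathbf{N}_h^n\|_h$.

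The real obstacle is therefore not $\|\mathbf{Q}_h^n\|_\infty$. Once zeroth-order bounds and bounds on $s^n$, $g^n$ are in hand, your Agmon interpolation suffices, because only $\tau\sum_k\|\mathcal{N}_h^k\|_h^2$ is needed. What is missing is zeroth-order control together with lower bounds on $s^n$ and $g^n$. These must come from an independent mechanism, for instance the MBP of Theorem~\ref{thm:uniform_bound} for $\mathbf{Q}_h$ plus a separate argument for $u_h$. That mechanism must be established in one induction over $n$ jointly with Lemmas~\ref{gn_bound0} and~\ref{gn_bound}, which as written depend circularly on this lemma. Your fallback of justifying $\|\mathbf{Q}_h^n\|_\infty$ a posteriori from the error estimate would only work inside a single convergence induction. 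It would require an inverse inequality, hence a $\tau$--$h$ restriction, and would not give the unconditional estimate stated here.
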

\begin{proof}
Let  $\dtau \bQ^{n+1} = \frac{\bQ^{n+1} - \bQ^n}{\tau}$ and $\dtau \bu^{n+1} = \frac{\bu^{n+1} - \bu^n}{\tau}$.
Taking the inner product of  \eqref{eq_scheme_Q} with $2\dtau \bQ^{n+1}$, we have:
\begin{equation}\label{eq_32}
    2\id{\mathcal{Q}(\tau \mLQ) \dtau \bQ^{n+1}}{\dtau \bQ^{n+1}} + 2\id{\mLQ \bQ^{n+1}}{\dtau \bQ^{n+1}} = 2\id{\mNQ^n}{\dtau \bQ^{n+1}}.
\end{equation}
 Using the standard algebraic identity and the definition of the norm $\iaQ{\cdot}$, we can rewrite the left-hand side of \eqref{eq_32} as follows:
\begin{align}
  2\iaQ{\dtau \bQ^{n+1}}+  2\id{\mLQ \bQ^{n+1}}{\dtau \bQ^{n+1}} =&2\iaQ{\dtau \bQ^{n+1}}+ \frac{1}{\tau} \left( \norm{ \bQ^{n+1}}_{\mLQ}^2 - \norm{ \bQ^n}_{\mLQ}^2 + \norm{\bQ^{n+1} - \bQ^n}_{\mLQ}^2\right) \no  \\=& 2\iaQ{\dtau \bQ^{n+1}}+\tau\norm{\dtau \bQ^{n+1}}_{\mLQ}^2 +\frac{1}{\tau} \left( \norm{ \bQ^{n+1}}_{\mLQ}^2 - \norm{ \bQ^n}_{\mLQ}^2\right)
   \no  \\=&2\ibQ{\dtau \bQ^{n+1}}+\frac{1}{\tau} \left( \norm{ \bQ^{n+1}}_{\mLQ}^2 - \norm{ \bQ^n}_{\mLQ}^2\right).
\end{align}
 Using the Cauchy-Schwarz and Young's inequalities, we can estimate the right-hand side of \eqref{eq_32} as follows:
\begin{equation}
    2\id{\mNQ^n}{\dtau \bQ^{n+1}} \le 2 \norm{\mNQ^n}_h \norm{\dtau \bQ^{n+1}}_h \le C \norm{\mNQ^n}_h^2 + \norm{\dtau \bQ^{n+1}}_h^2.
\end{equation}
Combining the above estimates and using Lemma~\ref{lem_norm_equivalence}, we arrive at the following inequality:
\begin{align} \label{eq_est1}
\frac{1}{\tau} \left( \norm{ \bQ^{n+1}}_{\mLQ}^2 - \norm{ \bQ^n}_{\mLQ}^2\right) + \ibQ{\dtau \bQ^{n+1}}
    \leq  C \norm{\mNQ^n}_h^2 .
\end{align}

We rearrange  \eqref{eq_scheme_Q} to isolate the Laplacian term:
\begin{equation} \label{eq_35}
    \mLQ \bQ^{n+1} = \mNQ^n  - \mathcal{Q}(\tau \mLQ) \dtau \bQ^{n+1}.
\end{equation}
Taking the discrete $l^2$ norm of \eqref{eq_35}, multipling both sides by $\frac{1}{4}$ and using the Lemma~\ref{lem_norm_equivalence}, we have:
\begin{equation} \label{eq_est2}
    \norm{\mLQ \bQ^{n+1}}_h^2 \le 2\norm{\mNQ^n}_h^2 + 2\norm{\mathcal{Q}(\tau \mLQ) \dtau \bQ^{n+1}}_h^2\le 2\norm{\mNQ^n}_h^2 +  2\norm{\dtau \bQ^{n+1}}_{\mathcal{Q}_1}^2.
\end{equation}
Combining estimates~\eqref{eq_est1} and~\eqref{eq_est2}, we have:
\begin{equation}
    \frac{1}{\tau} \left( \norm{ \bQ^{n+1}}_{\mLQ}^2 - \norm{ \bQ^n}_{\mLQ}^2\right) + \frac{1}{2}\ibQ{\dtau \bQ^{n+1}} + \frac{1}{4}\norm{\mLQ \bQ^{n+1}}_h^2
    \leq  C' \norm{\mNQ^n}_h^2.
\end{equation}
Multiplying both sides by $\tau$ and summing over $n$ from $0$ to $N-1$, we obtain:
\begin{equation}
    \norm{ \bQ^{n+1}}_{\mLQ}^2 + \tau \sum_{k=0}^{n} \left(  \frac{1}{2}\ibQ{\dtau \bQ^{k+1}} + \frac{1}{4}\norm{\mLQ \bQ^{k+1}}_h^2 \right) \le C' \tau \sum_{k=0}^{n} \norm{\mNQ^k}_h^2 + \norm{ \bQ^0}_{\mLQ}^2.
\end{equation}

On the basis of the Theorem \ref{them3_1}, we can deduce that there exists a constant $C > 0$ independent of $\tau$ and $n$ such that
\begin{align}
     \norm{\mNQ(\bQ^n,\bu^{n})}_h^2 \leq C. \label{align7_28}
\end{align}
Thus, we conclude that there exists a constant $M > 0$ independent of $\tau$ and $n$ such that the following uniform bound holds for $\bQ^{n}$:
\begin{equation}
    \max_{0 \le n \le N-1} \norm{ \bQ^{n+1}}_{\mLQ}^2 + \tau \sum_{k=0}^{n} \left(  \frac{1}{2}\ibQ{\dtau \bQ^{k+1}} + \frac{1}{4}\norm{\mLQ \bQ^{k+1}}_h^2 \right) \le M.
\end{equation}

Next, we perform a similar analysis for the velocity field $\bu^{n}$.
Similarly to the previous analysis for $\bQ^{n}$, we follow the same steps to derive the  estimate for $\bu^{n}$:
\begin{equation}
    \max_{0 \le n \le N-1} \norm{ \bu^{n+1}}_{\mLu}^2 + \tau \sum_{k=0}^{n} \left(  \frac{1}{2}\ibu{\dtau \bu^{k+1}} + \frac{1}{4}\norm{\mLu \bu^{k+1}}_h^2 \right) \le M',
\end{equation}
where $M'$ is a constant independent of $\tau$ and $n$.
\end{proof}
\begin{remark}
Here, we provide a detailed analysis of the nonlinear term $\mNu(\bQ,\bu)$. To establish the global error estimate, we first bound its discrete spatial $l^2$-norm at each individual time step $t_n$, and subsequently accumulate these bounds over the entire temporal sequence.
    Dropping constant coefficients for brevity, the term scales as $\nabla_h \cdot (\nabla_h \cdot (\bQ u_h))$, which can be expanded  as follows:
\begin{equation}
    \nabla_h \cdot (\nabla_h \cdot (\bQ u_h)) = {(\Delta_h \bQ) u_h}_{} + {2 (\nabla_h \cdot \bQ) \cdot \nabla_h u_h}_{} + {\bQ : D^2_h u_h}_{}. \label{equation7_35}
\end{equation}
Suming the square of the $l^2$-norm of each term in the above expansion over all time steps, we have:
\begin{align*}
    \dt \sum_{n} &\norm{ \nabla_h \cdot (\nabla_h \cdot (\bQ u_h)) }_h^2 \le 3 \dt \sum_{n} \norm{(\Delta_h \bQ^n) \bu^{n}}_h^2 + 3\dt \sum_{n} \norm{\nabla_h \bQ^n \cdot \nabla_h \bu^{n}}_h^2 + 3\dt \sum_{n} \norm{\bQ^n (D^2_h \bu^{n})}_h^2\\&\qquad\qquad
    \le  3 \dt \sum_{n} \norm{\Delta_h \bQ^n}_h^2 \norm{\bu^{n}}_{\infty}^2 + 3\dt \sum_{n} \norm{\nabla_h \bQ^n}_{H_h^1}^2 \norm{\nabla_h \bu^{n}}_{H_h^1}^2 + 3\dt \sum_{n} \norm{\bQ^n}_{\infty}^2 \norm{D^2_h \bu^{n}}_h^2\\&\qquad\qquad
        \le 3  M_u^2 T \left( \dt \sum_{n} \norm{\Delta_h \bQ^n}_h^2 \right) + 3  M_u^2 T \left( \dt \sum_{n} \norm{\bQ^n}_{H_h^2}^2 \right) + 3  M_u^2 T \left( \dt \sum_{n} \norm{\bQ^n}_{H_h^2}^2 \right)\\&\qquad\qquad
        = 3 M_u^2 T \left( \dt \sum_{n} \norm{\Delta_h \bQ^n}_h^2 + 2\dt \sum_{n} \norm{\bQ^n}_{H_h^2}^2\right)  \le C',
\end{align*}
where we have used the uniform bounds for $\bu^n$ and $\bQ^n$ established in Theorem \ref{them3_1} and Lemma~\ref{lem3_2}.
Following a similar analysis for the other nonlinear term, we establish uniform bounds for  $\mNu(\bQ,\bu)$ in the discrete $l^2$-norm over all time steps.
\end{remark}
\begin{lemma}[Uniform Bounds for $g^n$]\label{gn_bound}
On the basis of the energy stability in Theorem \ref{them3_1} and the higher-order estimates established in Lemma~\ref{lem3_2}, we can derive uniform bounds for the scalar factor $g^n$ and the auxiliary variable $s^n$. Specifically,
 there exist positive constants $G_*$ and $G^*$ independent of $\tau$ and $n$ such that
\begin{equation}
    G_* \le g^n = \frac{\exp(s^n)}{\exp(E_{1h}^{n+1})} \le G^*, \quad \forall 0 \le n \le T/\tau.
\end{equation}
\end{lemma}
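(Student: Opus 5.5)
The bounds on $g^n$ follow once we have two-sided bounds on both $s^n$ and $E_{1h}^{n+1}$, since $g^n=\exp(s^n-E_{1h}^{n+1})$ is monotone in the exponent.
\begin{equation*}
\exp\bigl(\underline{s}-C^*\bigr)\le g^n\le \exp\bigl(C_0+C_*\bigr).
\end{equation*}
So it suffices to show
\begin{equation*}
\underline{s}\le s^n\le C_0,\qquad -C_*\le E_{1h}^{n}\le C^*\quad\text{for all } 0\le n\le T/\tau,
\end{equation*}
with constants independent of $\tau$ and $n$. We then set $G_*=\exp(\underline{s}-C^*)$ and $G^*=\exp(C_0+C_*)$.

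\textbf{Bounds on $E_{1h}^n$.} The upper bound $s^n\le C_0$ is exactly the argument of Lemma~\ref{gn_bound0}: $\mathcal{E}_h$ dissipates by Theorem~\ref{them3_1}, and its quadratic part is nonnegative. For $E_{1h}^n$, Lemma~\ref{lem3_2} gives uniform bounds on $\|\nabla_h\bQ^n\|_h$ and $\|\Delta_h\bu^n\|_h$. With the discrete Poincaré/Sobolev embedding on the periodic grid, and controlling the mean via the lower-order terms of $\|\cdot\|_{\mLQ}$ and $\|\cdot\|_{\mLu}$, this yields uniform $\ell^\infty$ bounds on $\bQ^n$ and $\bu^n$. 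The estimate \eqref{eq_E1_bounds} then follows as in Lemma~\ref{gn_bound0}.

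\textbf{Lower bound on $s^n$ (the main obstacle).} Energy decay alone gives no lower bound on $s^n$, so I will track the relaxation step \eqref{eq4-9}. Since $\xi^{n+1}\in[0,1]$, $s^{n+1}$ is a convex combination of $\tilde s^{n+1}$ and $E_{1h}^{n+1}$, so
\begin{equation*}
s^{n+1}\ge \min\{\tilde s^{n+1},E_{1h}^{n+1}\}\ge\min\{\tilde s^{n+1},-C_*\}.
\end{equation*}
This reduces the problem to bounding $\tilde s^{n+1}-s^n$ from below. By \eqref{eq_scheme_s} and Cauchy--Schwarz,
\begin{equation*}
\tilde s^{n+1}-s^n\ge -g^n\bigl(\|\muQ^n\|_h\|\delta\bQ^{n+1}\|_h+\|\mu_h^n\|_h\|\delta\bu^{n+1}\|_h\bigr).
\end{equation*}
This would give a recursion of the form $s^{n+1}\ge s^n-\tau g^n F^n$, where $\tau\sum_n (F^n)^2\le C$ by Lemma~\ref{lem3_2} and the preceding Remark bounding the nonlinear terms in $\ell^2$ (using $\|\delta\bQ^{n+1}\|_h\le\tau\|\dtau\bQ^{n+1}\|_{\mathcal{Q}_\mathcal{L}^1}$ from Lemma~\ref{lem_norm_equivalence}).

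\textbf{Closing the argument.} The factor $g^n\le G^*$ from Lemma~\ref{gn_bound0} is already available, so summing the recursion and applying Cauchy--Schwarz in time gives
\begin{equation*}
s^n\ge \min\{s^0,-C_*\}-G^*\sqrt{T}\Bigl(\tau\sum_k (F^k)^2\Bigr)^{1/2}=: \underline{s},
\end{equation*}
valid whenever the relaxation step does not reset $s$ to a value above $E_{1h}$. To justify this cleanly I would argue by induction on the last index at which $s$ was reset toward $E_{1h}\ge -C_*$, and sum only from that index onward.

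The delicate point is the potential circularity: Lemma~\ref{lem3_2} and the nonlinear bounds themselves use $g^n\le G^*$. This only involves the upper bound on $g^n$, already supplied by Lemma~\ref{gn_bound0}, so the lower bound on $g^n$ can be derived afterwards without a loop.
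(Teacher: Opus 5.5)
Your proof has the same skeleton as the paper's. $G^*$ comes from Lemma~\ref{gn_bound0}. The lower bound on $s^n$ comes from telescoping the $s$-update and applying Cauchy--Schwarz in time against Lemma~\ref{lem3_2} with $g^k\le G^*$, and then $g^n\ge\exp(\underline{s}-C^*)$. At the relaxation step you are more careful than the paper. The paper writes $s^{n+1}-s^n=g^n(\cdots)$ ``from the definition of $s^{n+1}$ in \eqref{eq_scheme_s}'', but that formula defines $\tilde s^{n+1}$. Since \eqref{eq_xi_optimal} takes $\xi^{n+1}=0$ whenever $E_{1h}^{n+1}\le\tilde s^{n+1}$, one can have $s^{n+1}=E_{1h}^{n+1}<\tilde s^{n+1}$. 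Your bound $s^{n+1}\ge\min\{\tilde s^{n+1},E_{1h}^{n+1}\}\ge\min\{\tilde s^{n+1},-C_*\}$ is exactly what closes this hole. You also do not need the induction on the last reset index. The claim $s^n\ge\min\{s^0,-C_*\}-G^*\sum_{k<n}\tau F^k$ propagates in one line, because its right-hand side is already $\le -C_*$.

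Two steps need repair.

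\emph{Circular mean control.} You control the mean of $\mathbf{Q}_h^{n}$ and $u_h^n$ through the lower-order parts $g^n\kappa_1\|\cdot\|_h^2$ and $g^n\kappa_2\|\cdot\|_h^2$ of $\|\cdot\|_{\mathcal{L}_h}^2$ and $\|\cdot\|_{\mathcal{D}_h}^2$. Extracting $\|\mathbf{Q}_h^{n+1}\|_h$ from these means dividing by $g^n$, i.e.\ using the very lower bound you are proving. So your closing remark that only $g^n\le G^*$ enters is not correct at this point. Use the time-derivative part of Lemma~\ref{lem3_2} instead. Because $\mathcal{Q}_1(z)\ge 1$ on $[0,\infty)$ for any $g^n>0$, one has $\|\mathbf{Q}_h^n\|_h\le\|\mathbf{Q}_h^0\|_h+\sqrt{T}\,\bigl(\tau\sum_k\|\delta_\tau\mathbf{Q}_h^{k+1}\|_{\mathcal{Q}_{\mathcal{L}}^1}^2\bigr)^{1/2}$, and likewise for $u_h^n$, with no loop. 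Also, in $d=3$ the $H^1_h$ bound does not give $\|\mathbf{Q}_h^n\|_\infty$. This does not matter, because bounding $E_{1h}^n$ only needs $H^1_h\hookrightarrow L^6_h$ for $\mathbf{Q}_h^n$ and $H^2_h\hookrightarrow\ell^\infty$ for $u_h^n$.

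\emph{Cauchy--Schwarz bookkeeping.} The bound $\tau\sum_k(F^k)^2\le C$ does not follow from $\ell^2$-in-time estimates. $F^k$ contains products such as $\|\mathbf{H}_h^k\|_h\|\delta_\tau\mathbf{Q}_h^{k+1}\|_h$, so you would need a uniform-in-time bound on one factor. Instead apply Cauchy--Schwarz to each product separately, as the paper does: $\sum_k\tau\|\mathbf{H}_h^k\|_h\|\delta_\tau\mathbf{Q}_h^{k+1}\|_h\le\bigl(\tau\sum_k\|\mathbf{H}_h^k\|_h^2\bigr)^{1/2}\bigl(\tau\sum_k\|\delta_\tau\mathbf{Q}_h^{k+1}\|_h^2\bigr)^{1/2}$. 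This needs only $\ell^2$-in-time bounds on both factors.
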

\begin{proof}
The upper bound for $g^n$ has been established in Lemma~\ref{gn_bound0}, so we focus on deriving a uniform positive lower bound for $g^n$.
From the definition of $s^{n+1}$ in \eqref{eq_scheme_s}, we have:
\begin{equation}
    s^{n+1} - s^n = g^n \left( \langle \muQ^n, \bQ^{n+1} - \bQ^{n} \rangle_h + \langle \mu_h^n, \bu^{n+1} - \bu^{n} \rangle_h \right).
\end{equation}
Applying the Cauchy-Schwarz inequality, we can estimate the right-hand side as follows:
\begin{equation}
    s^{n+1} \ge s^n - |g^n| \tau \left( \norm{\muQ^n}_h \norm{\dtau \bQ^{n+1}}_h + \norm{\mu_h^n}_h \norm{\dtau \bu^{n+1}}_h \right).
\end{equation}
Summing the above inequality from $k=0$ to $n$, we have:
\begin{equation}\label{eq1}
    s^{n+1} \ge s^0 - \sum_{k=0}^{n} \tau |g^k| \norm{\muQ^k}_h \norm{\dtau \bQ^{k+1}}_h - \sum_{k=0}^{n} \tau |g^k| \norm{\mu_h^k}_h \norm{\dtau \bu^{k+1}}_h.
\end{equation}
Let $C_g = \max_k |g^k|$.
Based on the Lemma~\ref{lem3_2}, for \eqref{eq1}, we have:
\begin{align*} \label{eq_lower_bound_step}
    \sum_{k=0}^{n} \tau \norm{\muQ^k}_h \norm{\dtau \bQ^{k+1}}_h
    \le \left( \tau \sum_{k=0}^{n} \norm{\muQ^k}_h^2 \right)^{1/2} \left( \tau \sum_{k=0}^{n} \norm{\dtau \bQ^{k+1}}_h^2 \right)^{1/2}
     \le \sqrt{T} K_{\mu} \sqrt{C} =: C_{drift},\\
    \sum_{k=0}^{n} \tau \norm{\mu_h^k}_h \norm{\dtau \bu^{k+1}}_h
    \le \left( \tau \sum_{k=0}^{n} \norm{\mu_h^k}_h^2 \right)^{1/2} \left( \tau \sum_{k=0}^{n} \norm{\dtau \bu^{k+1}}_h^2 \right)^{1/2}
     \le \sqrt{T} K_{\mu} \sqrt{C'} =: C_{drift},
\end{align*}
 where $K_{\mu}$ is a constant depending on the uniform bounds of $\muQ^k$ and $\mu_h^k$.

Combining the estimates for both $\bQ$ and $\bu$ terms, we have:
\begin{equation}
    s^{n} \ge s^0 - 2 C_g C_{drift} (T), \quad \forall n \ge 0.
\end{equation}
which proves $s^n$ is uniformly bounded from below.
Combining this with the upper bound for $E_1$ in \eqref{eq_E1_bounds}, we conclude that there exists a positive constant $G_*$ such that:
\begin{equation}
    g^n = \frac{\exp(s^n)}{\exp(E_{1h}^{n+1})} \ge \frac{\exp(s^0 - 2 C_g C_{drift} (T))}{\exp(C^*)} =: G_* > 0.
\end{equation}
\end{proof}
\section{Higher-Order Regularity of the EOP-GSAV-EI  Scheme}~\\
In this section, we present a direct regularity bootstrapping estimate for the EOP-GSAV-EI  scheme. The key idea is to leverage the smoothing properties of the linear semigroup generated by $\mLQ$ and $\mLu$ to obtain higher-order regularity estimates for  $\bQ^n$ and $\bu^n$.
\begin{lemma} \label{lem_analytic_semigroup}
Under periodic boundary conditions, both operators generate discrete analytic semigroups $e^{-tA}$ satisfying the smoothing estimate:
\begin{equation} \label{eq_smoothing_estimate}
    \vertiii{A^\beta e^{-tA}}_h \le C_\beta t^{-\beta}, \quad \forall t > 0, \; \beta > 0,
\end{equation}
where $\vertiii{\cdot}_h$ denotes the induced operator norm corresponding to the discrete $l^2$-norm, $A \in \{\mLQ, \mLu\}$, and the constant $C_\beta > 0$ depends only on $\beta$ and is independent of the mesh size $h$.
\end{lemma}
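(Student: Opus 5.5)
The argument is spectral: $\mLQ$ and $\mLu$ are symmetric positive (semi-)definite on the periodic grid space, so the estimate reduces to an elementary scalar inequality applied eigenvalue by eigenvalue. Nothing specific to a sectorial or analytic setting is needed, and this is also what makes the constant independent of $h$.

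\emph{Step 1: diagonalize.} Under periodic boundary conditions the difference operators $D_k^+D_k^-$ are diagonalized by the discrete Fourier modes $e^{i(k_1x_p+k_2y_q+k_3z_r)}$, with symbols
\begin{equation*}
-\tfrac{4}{h^2}\sin^2(k_jh/2)\le 0 .
\end{equation*}
Hence $-K\Delta_h$ and $2B_0\Delta_h^2$ are symmetric on $\langle\cdot,\cdot\rangle_h$, with nonnegative eigenvalues. Adding $g^n\kappa_1 I$ or $g^n\kappa_2 I$ (with $g^n\ge 0$, by Lemma~\ref{gn_bound0}) keeps the eigenvalues $\lambda\ge 0$ and preserves the orthonormal eigenbasis. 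For the tensor operator $\mLQ$, the operator acts componentwise on $\mathbf{S}_h^{(d)}$; symmetry and tracelessness are preserved because the Laplacian commutes with these linear constraints. So $\mLQ$ restricted to $\mathbf{S}_h^{(d)}$ is still symmetric and nonnegative, with the Frobenius-induced discrete $l^2$ norm.

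\emph{Step 2: functional calculus.} For any $\beta>0$, $A^\beta e^{-tA}$ is defined spectrally. Because $A$ is self-adjoint, its induced $l^2$ operator norm equals the largest modulus of the symbol over the spectrum:
\begin{equation*}
\vertiii{A^\beta e^{-tA}}_h=\max_{\lambda\in\sigma(A)}\lambda^\beta e^{-t\lambda}\le \sup_{\lambda\ge 0}\lambda^\beta e^{-t\lambda}.
\end{equation*}
\emph{Step 3: scalar bound.} Write $\lambda^\beta e^{-t\lambda}=t^{-\beta}(t\lambda)^\beta e^{-t\lambda}$. The function $x^\beta e^{-x}$ is maximized at $x=\beta$, so
\begin{equation*}
\vertiii{A^\beta e^{-tA}}_h\le C_\beta t^{-\beta},\qquad C_\beta=\beta^\beta e^{-\beta}.
\end{equation*}
This constant depends only on $\beta$. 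It is independent of $h$, of $g^n$ and $\kappa_i$, and of the spectral radius, which grows like $h^{-2}$ or $h^{-4}$ but never enters.

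The only delicate point is Step 1: verifying self-adjointness on the constrained tensor space, and noting that the induced operator norm of a normal operator equals its spectral radius. Once self-adjointness is established, the rest is immediate. The $h$-independence rests on two facts: the bound of Step 3 is uniform over all $\lambda\ge 0$, and no resolvent or sector-angle estimate is required.
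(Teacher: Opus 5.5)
Your proof is correct and is essentially the paper's argument: you use the self-adjointness and nonnegativity of $\mathcal{L}_h$ and $\mathcal{D}_h$, then functional calculus reduces the estimate to $\sup_{\lambda\ge 0}\lambda^\beta e^{-t\lambda}=(\beta/e)^\beta t^{-\beta}$, giving the same $h$-independent constant $C_\beta=\beta^\beta e^{-\beta}$. The differences are cosmetic: you verify symmetry and nonnegativity through the discrete Fourier symbols rather than by summation by parts, and $g^n>0$ follows directly from its definition as a ratio of exponentials, so you do not need to appeal to Lemma~\ref{gn_bound0}.
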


\begin{proof}
By classical semigroup theory \cite[Chap.~2, Sec.~6]{pazy2012semigroups},, if a finite-dimensional operator $A$ is self-adjoint and positive semi-definite, functional calculus yields $\norm{A^\beta e^{-tA}}_h \le \sup_{\lambda \ge 0} \lambda^\beta e^{-t\lambda} = (\beta/e)^\beta t^{-\beta}$. Thus, the estimate \eqref{eq_smoothing_estimate} holds with $C_\beta = (\beta/e)^\beta$, which is inherently independent of $h$. It remains to verify that $\mLQ$ and $\mLu$ are self-adjoint and positive semi-definite under the discrete inner product $\langle \cdot, \cdot \rangle_h$.

For $\mLQ$, applying discrete summation by parts (SBP) yields:
\begin{equation*}
    \langle \mLQ u_h, v_h \rangle_h = \langle \nabla_h u_h, \nabla_h v_h \rangle_h+g^n \kappa_1 \langle u_h,  v_h \rangle_h = \langle u_h, \mLQ v_h \rangle_h,
\end{equation*}
which implies $\mLQ$ is self-adjoint. Setting $v_h = u_h$ gives $\langle \mLQ u_h, u_h \rangle_h = \norm{\nabla_h u_h}_h^2 \ge 0$, confirming it is positive semi-definite.

Similarly, applying SBP twice for $\mLu$ yields:
\begin{equation*}
    \langle \mLu u_h, v_h \rangle_h = \langle \Delta_h u_h, \Delta_h v_h \rangle_h+g^n \kappa_2 \langle u_h,  v_h \rangle_h = \langle u_h, \mLu v_h \rangle_h,
\end{equation*}
and $\langle \mLu u_h, u_h \rangle_h = \norm{\Delta_h u_h}_h^2 \ge 0$. Thus, $\mLu$ also satisfies the required properties, completing the proof.
\end{proof}
\begin{theorem} \label{th4_1}
    Aussuming the initial data $\bQ^0$ and $\bu^0$ possess sufficient regularity  $\bQ^0 \in H^{2}(\Omega)$ and $\bu^0 \in H^{4}(\Omega)$.
Under the assumptions of Lemma~3.3, for   $0<\epsilon < \frac{1}{4}$, there exists a constant $C_\epsilon$ independent of $n$ such that the following regularity estimate holds for the numerical solutions generated by the EOP-GSAV-EI  scheme:
\begin{equation}
    \sup_{0 \le n \le N} \|\bQ^n\|_{H_h^{2}}+  \sup_{0 \le n \le N} \norm{\Delta_h \bu^n}_{h} \le C_\epsilon, \quad \forall n \ge 0.
\end{equation}
And the following uniform bound holds for the nonlinear term $\mNQ^n$ and $\mNu^n$:
\begin{equation}
    \sup_{0 \le n \le N} \norm{\mNQ^n}_h + \sup_{0 \le n \le N} \norm{\mNu^n}_h \le C.
\end{equation}
\end{theorem}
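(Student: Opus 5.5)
The plan is a regularity bootstrap built on the exponential form \eqref{eq4_6}--\eqref{eq4-7} of the scheme, the smoothing estimate of Lemma~\ref{lem_analytic_semigroup}, and the bounds already available from Theorem~\ref{them3_1}, Lemma~\ref{lem3_2} and Lemma~\ref{gn_bound}. Iterating \eqref{eq4_6} gives the discrete variation-of-constants formula
\begin{equation*}
\bQ^{n}=e^{-n\tau\mLQ}\bQ^{0}+\sum_{k=0}^{n-1}e^{-(n-1-k)\tau\mLQ}\,\tau\phi_1(-\tau\mLQ)\mNQ^{k},
\end{equation*}
and the analogous formula holds for $\bu^n$ with $\mLu$ and $\mNu^k$. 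The operator $\mLQ$ depends on $g^n$, which varies with $n$. Since $g^n\in[G_*,G^*]$ by Lemma~\ref{gn_bound}, I would first freeze the stabilization: replace $g^n\kappa_1$ by a fixed constant and move the difference $(g^n-\bar g)\kappa_1\bQ^n$ into the nonlinearity. That difference is harmless because $\bQ^n$ is bounded in $l^2$. After this step the propagator is a genuine semigroup and Lemma~\ref{lem_analytic_semigroup} applies uniformly in $h$.

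\textbf{Bootstrap for $\bQ$.} For the $H_h^2$ bound I apply $\mLQ$ to the formula above. The initial term is bounded by $\|\mLQ\bQ^0\|_h\le C\|\bQ^0\|_{H^2}$. For the sum I use $\vertiii{\mLQ e^{-t\mLQ}\tau\phi_1(-\tau\mLQ)}_h$. Writing $\tau\phi_1(-\tau\mLQ)=\int_0^\tau e^{-\sigma\mLQ}d\sigma$ turns each summand into an integral of $\mLQ e^{-s\mLQ}$ over $s\in[t_{n-1-k},t_{n-k}]$. This is not integrable near $s=0$ when $\beta=1$. To avoid that, I would use $\beta=1-\epsilon$ on $\mNQ^k$ and put the remaining $\mLQ^{\epsilon}$ onto $\mNQ^k$. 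This needs $\|\mLQ^{\epsilon}\mNQ^k\|_h\lesssim\|\mNQ^k\|_{H_h^{2\epsilon}}$, and that is where the restriction $\epsilon<1/4$ enters: fractional Sobolev regularity of order $2\epsilon<1/2$ of the nonlinearity can be controlled by products of $H^1$/$H^2$ quantities, including in 3D. The resulting sum $\sum_k\tau\,t_{n-k}^{-(1-\epsilon)}\le C_\epsilon T^{\epsilon}$ is bounded independently of $n$ and $\tau$. This yields $\sup_n\|\bQ^n\|_{H_h^2}\le C_\epsilon$.

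\textbf{Estimate for $u$.} Here I use $\mLu^{1/2}\sim\Delta_h$. With $\beta=1/2$ on $\mLu$, the sum $\sum_k\tau\,t_{n-k}^{-1/2}$ is bounded, provided $\sup_k\|\mNu^k\|_h$ is bounded. So the two parts of the theorem are coupled, and I would close them together by a discrete Gronwall/induction argument. Assume $\|\bQ^k\|_{H_h^2}+\|\Delta_h\bu^k\|_h\le C_\epsilon$ for $k\le n-1$. Then bound $\|\mNQ^k\|_h$ and $\|\mNu^k\|_h$ term by term, using the discrete Sobolev embeddings $H_h^2\hookrightarrow l^\infty$ and $H_h^2\hookrightarrow W_h^{1,4}$ ($d\le3$) on the periodic grid together with $g^k\le G^*$. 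The cubic terms, $u D_h^2u$, $\mathbf{Q}u^2$ and $|M|^2u$ are controlled by $\|u\|_\infty\|D_h^2u\|_h$ and similar products. The expansion \eqref{equation7_35} handles $\nabla_h\cdot(\nabla_h\cdot(M u))$ via $\|\Delta_h\bQ\|_h\|u\|_\infty+\|\nabla_h\bQ\|_{l^4}\|\nabla_h u\|_{l^4}+\|\bQ\|_\infty\|D_h^2u\|_h$. Feeding these bounds into the two Duhamel estimates gives the bounds at level $n$, with constants depending only on $T$, the data, $G^*$ and $\epsilon$.

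\textbf{Main obstacle.} The hard step is closing the induction without the constant growing from step to step. The nonlinearity is superlinear in $\|\bQ\|_{H_h^2}$ and $\|\Delta_h u\|_h$, so a naive induction gives $C_{n}\le a+bC_{n-1}^3$, which blows up. I would avoid this by interpolation. Lemma~\ref{lem3_2} already gives uniform control of $\|\nabla_h\bQ^n\|_h$ and $\|\Delta_h\bu^n\|_h$, so the nonlinear bounds should be made linear in the top norm, for instance $\|\mNQ^k\|_{H_h^{2\epsilon}}\le C(1+\|\bQ^k\|_{H_h^2}^{\theta})$ with $\theta<1$ via Gagliardo--Nirenberg. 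Then the Duhamel inequality $X_n\le C+C\sum_k\tau\,t_{n-k}^{-(1-\epsilon)}X_k^{\theta}$ closes by a singular discrete Gronwall lemma, of Henry type with kernel $t^{-(1-\epsilon)}$. Once the uniform $H_h^2$ and $\Delta_h u$ bounds are in hand, the second claim, $\sup_n(\|\mNQ^n\|_h+\|\mNu^n\|_h)\le C$, follows directly from the term-by-term estimates above.
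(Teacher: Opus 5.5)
Your architecture matches the paper's: a discrete Duhamel formula, the smoothing estimate of Lemma~\ref{lem_analytic_semigroup} with an $\epsilon$-loss, then a bootstrap. However, the loop you set up does not close. The $\mathbf{Q}$-step needs $\|\mathbf{N}_h^k\|_{H_h^{2\epsilon}}$, and $\mathbf{N}_h^k$ contains the coupling term $\frac{2B_0q^2}{s_+}\bigl(u_h^kD_h^2u_h^k-\frac{1}{3}\mathrm{tr}(u_h^kD_h^2u_h^k)\mathbf{I}\bigr)$. This term is not controlled in $H_h^{2\epsilon}$ by $H^1$/$H^2$ quantities. Take $u_h=1+\delta w_h$: the term is $\delta D_h^2w_h+O(\delta^2)$. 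So you genuinely need $D_h^2u_h^k\in H_h^{2\epsilon}$, i.e.\ a uniform bound on $u_h^k$ in $H_h^{2+2\epsilon}$. This holds for every $\epsilon>0$, so the restriction $\epsilon<1/4$ does not help. Nothing in your plan supplies this bound. Your $u$-step with $\beta=1/2$ returns only $\|\Delta_h u_h^n\|_h$, which Lemma~\ref{lem3_2} already gives, and your induction hypothesis contains nothing more.

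The repair is to lift $u_h$ beyond $H^2$.
\begin{itemize}
\item In the $u$-Duhamel formula, apply $\mathcal{D}_h^{\beta}$ with $\beta=(1+\epsilon)/2<1$, an operator of order $2+2\epsilon$. The kernel $t^{-\beta}$ is still integrable.
\item You then only need $\|\mathcal{N}_h^j\|_h\le C(1+\|\Delta_h\mathbf{Q}_h^j\|_h)$. Your expansion \eqref{equation7_35} gives this once Lemma~\ref{lem3_2} controls the lower-order norms.
\item Then $X_n=\|\mathbf{Q}_h^n\|_{H_h^2}$ and $Y_n=\|u_h^n\|_{H_h^{2+2\epsilon}}$ satisfy coupled Volterra inequalities that are linear in $(X,Y)$, with weakly singular kernels. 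A discrete Henry--Gronwall lemma closes them, so the superlinear worry and the need for $\theta<1$ disappear.
\end{itemize}

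The paper instead runs the bootstrap sequentially, with no Gronwall step.
\begin{itemize}
\item From the $l^2$ bound \eqref{align7_28} on $\mathbf{N}_h^n$ it gets $\|\mathcal{A}^{1-\epsilon/2}\mathbf{Q}_h^n\|_h\le C$.
\item A duality argument then controls $\mathcal{N}_h^n$ in the negative norm $\|\mathcal{A}^{-\epsilon/2}\cdot\|_h$. It uses summation by parts and the $h$-uniform bound on the discrete Riesz transforms $\mathcal{A}^{-1}\nabla_h\nabla_h$, and needs only that fractional regularity of $\mathbf{Q}_h$.
\item Applying $\mathcal{A}^{-\epsilon/2}\mathcal{B}^{1-\epsilon/4}$ to the $u$-Duhamel formula gives $u_h$ regularity of order about $4-2\epsilon$.
\item From this it reads off $\sup_n\|\mathcal{A}^{\epsilon/2}\mathbf{N}_h^n\|_h\le C$ and lifts $\mathbf{Q}_h^n$ to $H_h^2$.
\end{itemize}
That extra-regularity step for $u_h$ is exactly what your plan is missing.

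Your attention to the $n$-dependence of $\mathcal{L}_h$ through $g^n$, which the paper ignores, is a real improvement. It is cleanest to note that $g^n\kappa_1\mathbf{I}$ commutes with $\Delta_h$. A product of one-step propagators is then $e^{(t_n-t_{k+1})K\Delta_h}$ times a scalar in $(0,1]$. This is better than moving $(g^n-\bar g)\kappa_1\mathbf{Q}_h^n$ into the nonlinearity, which is not exact for an ETD step.
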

\begin{proof}
 To rigorously derive the global error representation, we first recall the exact single-step evolution of the semi-discrete scheme over the interval $[t_k, t_{k+1}]$. By applying the variation-of-constants formula and explicitly treating the nonlinear term as a constant $\mNQ^k = \mNQ(\bQ^k)$ within this local step, we have:
\begin{equation} \label{eq_single_step}
    \bQ^{k+1} = e^{\tau \mLQ} \bQ^k + \int_0^\tau e^{(\tau-s)\mLQ} \mNQ^k \, ds,
\end{equation}
where $\tau = t_{k+1} - t_k$ is the uniform time step size.

Iteratively applying \eqref{eq_single_step} from $k=0$ up to $n-1$, we unroll the recurrence relation to obtain the fully discrete Duhamel's formula. This expresses $\bQ^n$ as the sum of the initial state's zero-input evolution and the accumulated historical nonlinear responses:
\begin{equation} \label{eq_discrete_sum}
    \bQ^n = e^{t_n \mLQ} \bQ^0 + \sum_{k=0}^{n-1} e^{(t_n - t_{k+1}) \mLQ} \int_0^\tau e^{(\tau-s)\mLQ} \mNQ^k \, ds.
\end{equation}

To facilitate the subsequent analytic semigroup estimates, it is highly advantageous to rewrite this discrete summation as a single continuous integral over the entire time domain $[0, t_n]$. We introduce a piecewise constant interpolation of the nonlinear term in time, defined as:
\begin{equation} \label{eq_piecewise_N}
    \widetilde{\mNQ}(\sigma) := \mNQ^k, \quad \text{for } \sigma \in [t_k, t_{k+1}).
\end{equation}

By utilizing the change of variables $\sigma = s + t_k$ within each local integral, we observe that the differential becomes $d\sigma = ds$, and the local integration bounds $[0, \tau]$ map exactly to the global sub-interval $[t_k, t_{k+1}]$. Furthermore, the temporal indices in the exponential operators can be perfectly merged. Recalling that $\tau = t_{k+1} - t_k$, the exponent simplifies algebraically as follows:
\begin{equation*}
    (t_n - t_{k+1}) + (\tau-s) = t_n - t_{k+1} + (t_{k+1} - t_k) - (\sigma - t_k) = t_n - \sigma.
\end{equation*}

Consequently, the summation of local integrals identically fuses into a global continuous integral representation:
\begin{equation} \label{eq_discrete_duhamel}
    \bQ^n = \underbrace{e^{t_n \mLQ} \bQ^0}_{I_1} + \underbrace{\int_0^{t_n} e^{(t_n - \sigma)\mLQ} \widetilde{\mNQ}(\sigma) \, d\sigma}_{I_2}.
\end{equation}

Let $\mathcal{A} = -\mLQ$ be a strictly positive sectorial operator.
 Applying the operator $\mathcal{A}^{1 - \frac{\epsilon}{2}}$  to both sides of \eqref{eq_discrete_duhamel} and taking the $l^2$-norm, we can obtain the following estimate:
\begin{equation}\label{eq_triangle_I1_I2}
    \norm{\mathcal{A}^{1 - \frac{\epsilon}{2}} \bQ^n}_h \le \norm{\mathcal{A}^{1 - \frac{\epsilon}{2}} I_1}_h + \norm{\mathcal{A}^{1 - \frac{\epsilon}{2}} I_2}_h.
\end{equation}
For the initial value term $I_1$, on the basis of the assumption $\bQ^0 \in H^{2}(\Omega)$, we have
\begin{equation}
    \norm{\mathcal{A}^{1 - \frac{\epsilon}{2}} I_1}_h = \norm{e^{t_n \mLQ} \mathcal{A}^{1 - \frac{\epsilon}{2}} \bQ^0}_h \le C \norm{\mathcal{A}^{1 - \frac{\epsilon}{2}} \bQ^0}_h \le C_0.
\end{equation}
For the integral term $I_2$, we use the Lemma~\ref{lem_analytic_semigroup} and \eqref{align7_28} to the integrand:
\begin{align}
    \norm{\mathcal{A}^{1 - \frac{\epsilon}{2}} I_2}_h &\le \int_0^{t_n} \norm{\mathcal{A}^{1-\frac{\epsilon}{2}} e^{(t_n - \sigma)\mLQ} \widetilde{\mNQ}(\sigma)}_h \, d\sigma \notag \\
    &\le C_{1-\frac{\epsilon}{2}} \int_0^{t_n} (t_n - \sigma)^{-(1-\frac{\epsilon}{2})} \norm{\widetilde{\mNQ}(\sigma)}_h \, d\sigma \notag \\
    &\le C_{1-\frac{\epsilon}{2}} M_{\mNQ} \int_0^{t_n} (t_n - \sigma)^{-1+\frac{\epsilon}{2}} \, d\sigma\no
    \le \frac{2 C_{1-\frac{\epsilon}{2}} M_{\mNQ}}{\epsilon} T^{\frac{\epsilon}{2}}.
\end{align}
Combining the estimates for both $I_1$ and $I_2$, we conclude that there exists a constant $C_\epsilon$ independent of $n$ such that:
\begin{equation}\label{eq3_8}
    \sup_{0 \le n \le N} \norm{\ma^{1 - \frac{\epsilon}{2}} \bQ^n}_{h} \le C\left(\norm{\ma^{1 - \frac{\epsilon}{2}} \bQ^0}_{h}, T, M_{\mNQ}, \epsilon \right).
\end{equation}

Next, we perform a similar analysis for the velocity field $\bu^n$.
Iterating the equation for $\bu^n$ in a similar manner, we can derive the following discrete Duhamel's formula for $\bu^n$:
\begin{equation}
    \bu^n = e^{t_n \mLu} \bu^0 + \int_0^{t_n} e^{(t_n - s)\mLu} \widetilde{\mNu}(s) \, ds, \label{eq_v}
\end{equation}
where $\widetilde{\mNu}(s) = \mNu^k$ for $s \in [t_k, t_{k+1})$.

To rigorously bound the highly singular double divergence term within the nonlinear function, we employ a discrete duality argument combined with the fractional powers of the operator $\ma$. By the standard variational definition of the discrete $L^2$-norm and the self-adjointness of $\ma^{-\frac{\ep}{2}}$, we have:
\begin{equation} \label{eq_dual_def_divdiv}
    \begin{aligned}
        \norm{\ma^{-\frac{\ep}{2}} \nabla_h \cdot (\nabla_h \cdot (\bM_h \bu))}_h
        &= \sup_{v_h \ne 0} \frac{\langle \ma^{-\frac{\ep}{2}} \nabla_h \cdot (\nabla_h \cdot (\bM_h \bu)), v_h \rangle_h}{\norm{v_h}_h} \\
        &= \sup_{v_h \ne 0} \frac{\langle \nabla_h \cdot (\nabla_h \cdot (\bM_h \bu)), \ma^{-\frac{\ep}{2}} v_h \rangle_h}{\norm{v_h}_h}.
    \end{aligned}
\end{equation}

Under periodic boundary conditions, performing discrete summation by parts (SBP) twice allows us to shift the spatial derivatives onto the test function without generating any boundary terms. Crucially, since $\ma$ is a constant-coefficient operator generated by the discrete Laplacian, it commutes with the discrete differential operators. This property allows us to freely redistribute the fractional operators before applying the Cauchy-Schwarz inequality:
\begin{align} \label{eq_sbp_cauchy}
    \langle \nabla_h \cdot (\nabla_h \cdot (\bM_h \bu)), \ma^{-\frac{\ep}{2}} v_h \rangle_h
    &= \langle \bM_h \bu, \nabla_h \nabla_h (\ma^{-\frac{\ep}{2}} v_h) \rangle_h \nonumber \\
    &= \langle \ma^{1-\frac{\ep}{2}} (\bM_h \bu), \ma^{-1} \nabla_h \nabla_h v_h \rangle_h \nonumber \\
    &\le \norm{\ma^{1-\frac{\ep}{2}} (\bM_h \bu)}_h \norm{\ma^{-1} \nabla_h \nabla_h v_h}_h.
\end{align}

Finally, we invoke discrete elliptic regularity, which is equivalent to the uniform $L^2$-boundedness of the discrete Riesz transforms ($\ma^{-1} \nabla_h \nabla_h$). By Parseval's identity in the discrete Fourier domain, this guarantees the existence of a positive constant $C$, independent of the mesh size $h$, such that $\norm{\ma^{-1} \nabla_h \nabla_h v_h}_h \le C \norm{v_h}_h$. Substituting this uniform bound back into \eqref{eq_sbp_cauchy} and taking the supremum over $v_h$ in \eqref{eq_dual_def_divdiv}, we obtain the crucial estimate:
\begin{equation} \label{eq_double_div_bound}
    \norm{\ma^{-\frac{\ep}{2}} \nabla_h \cdot (\nabla_h \cdot (\bM_h \bu))}_h \le C \norm{\ma^{1-\frac{\ep}{2}} (\bM_h \bu)}_h.
\end{equation}
The other terms in $\mNu(\bQ^n, \bu^n)$ can be also estimated, leading to the following bound:
\begin{equation} \label{eq_Nu_bound}
    \norm{\ma^{-\frac{\ep}{2}} \mNu(\bQ^n, \bu^n)}_{h}  \le C \norm{\ma^{-\frac{\ep}{2}} \bQ^n}_{h} \norm{\bu^n}_{H_h^{2}}.
\end{equation}

Let $\mathcal{B} = -\mLu$ be a strictly positive sectorial operator. Applying the operator $\ma^{-\frac{\ep}{2}}\mathcal{B}^{1-\frac{\epsilon}{4}}$ to both sides of \eqref{eq_v}, taking the $l^2$-norm, and simultaneously utilizing the Lemma~\ref{lem_analytic_semigroup} along with the nonlinear bound in \eqref{eq_Nu_bound}, we obtain the following combined estimate:
\begin{equation} \label{eq_combined_estimate}
    \begin{aligned}
        \norm{\ma^{-\frac{\ep}{2}}\mathcal{B}^{1-\frac{\epsilon}{4}} \bu^n}_{h}
        &\le \norm{e^{t_n \mathcal{B}} \ma^{-\frac{\ep}{2}}\mathcal{B}^{1-\frac{\epsilon}{4}} \bu^0}_{h} + \norm{\int_0^{t_n} \ma^{-\frac{\ep}{2}}\mathcal{B}^{1-\frac{\epsilon}{4}} e^{(t_n - s)\mathcal{B}}  \widetilde{\mNu}(s) \, ds}_{h} \\
        &\le C e^{-\lambda t_n} \norm{\ma^{-\frac{\ep}{2}}\mathcal{B}^{1-\frac{\epsilon}{4}} \bu^0}_{h} + C \int_0^{t_n} (t_n-s)^{-\left(1 - \frac{\epsilon}{4}\right)} \norm{\widetilde{\mNu}(s)}_{h} \, ds \\
        &\le C \norm{\ma^{-\frac{\ep}{2}}\mathcal{B}^{1-\frac{\epsilon}{4}} \bu^0}_h + C \sup_{s \in [0,t_n]} \norm{\ma^{-\frac{\ep}{2}}\widetilde{\mNu}(s)}_{h} \int_0^{t_n} (t_n-s)^{-1 + \frac{\epsilon}{4}} \, ds \\
        &\le C_0 + C \sup_{s \in [0,t_n]} \norm{\ma^{-\frac{\ep}{2}}\widetilde{\mNu}(s)}_{h} \cdot \frac{4}{\epsilon} t_n^{\frac{\epsilon}{4}}.
    \end{aligned}
\end{equation}
Since $t_n \le T$, the right-hand side is uniformly bounded. Thus, recalling the norm equivalence associated with $\mathcal{B}$, there exists a constant $C$ independent of $n$ such that:
\begin{equation}\label{eq4_13}
    \sup_{0 \le n \le N} \norm{\ma^{-\frac{\ep}{2}}\mathcal{B}^{1-\frac{\epsilon}{4}}\bu^n}_{h} \le C\left(\norm{\ma^{-\frac{\ep}{2}}\mathcal{B}^{1-\frac{\epsilon}{4}}\bu^0}_{h}, T, M_{\mNu}, \epsilon \right).
\end{equation}
Thus, we have established a uniform bound for $\Delta_h\bu^n$ in the fractional Sobolev space $l^\infty$:
\begin{equation}
    \sup_{0 \le n \le N} \norm{\Delta_h \bu^n}_{h} \le C.
\end{equation}
Consequently, by \eqref{eq4_13}, we immediately deduce $\sup_{0 \le n \le N} \norm{\ma^{\frac{\ep}{2}}\mNQ^n}_{h} \le C$.
Applying the smoothing estimate \eqref{eq_triangle_I1_I2} again and applying the $\ma$, we can further lift the regularity of $\bQ^n$ to $H_h^2$:
\begin{equation}
  \norm{\bQ^n}_{H_h^{2}} \le C \norm{A^{1 - \frac{\epsilon}{2}}\ma^{\frac{\ep}{2}}\bQ^n}_{h} \le C\left(\norm{\bQ^0}_{H_h^{2}}, T, M_{\mNQ}, \epsilon \right).
\end{equation}
Finally, we can establish the desired regularity estimate for both $\mNQ^n$ and $\mNu^n$:
\begin{equation}
    \sup_{0 \le n \le N} \norm{\mNQ^n}_h + \sup_{0 \le n \le N} \norm{\mNu^n}_h \le C.
\end{equation}
\end{proof}
\begin{lemma}\label{lem:nonlinear_bound}
    Let $\mathbf{Q}_h$ be a discrete approximation of the system \eqref{eq_scheme_Q}-\eqref{eq_scheme_s} in spatial dimension $d \in \{2, 3\}$. Let the dimensional coefficients be $b_2 = 0$ and $b_3 = \frac{|B|}{\sqrt{6}}$.

    If the stabilization parameter $\kappa$ is chosen to satisfy the lower bound condition
$\kappa \ge \norm{\tilde{A}}_\infty+C \norm{\bQ}_\infty  ,$
    then the stabilized nonlinear term $\mathbf{H}_h$ in the Smectic model obeys the following pointwise Frobenius norm estimate:
    \begin{equation}\label{eq_lemma_F_bound}
        \big| \kappa \mathbf{Q}_h + \mathbf{H}_h \big|_F \leq \kappa\big| \mathbf{Q}_h \big|_F + f_d(\big| \mathbf{Q}_h \big|_F),
    \end{equation}
    where the scalar function is defined as $f_d(\xi) := -A\xi + b_d \xi^2 - C\xi^3 + S$.
\end{lemma}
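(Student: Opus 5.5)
We argue pointwise at each grid node and split $\mathbf{H}_h$ from \eqref{eq_H_mu_def} into a pure bulk part and a smectic coupling part. Write
\[
\kappa\mathbf{Q}_h+\mathbf{H}_h=\Big[(\kappa+\tilde A)\mathbf{Q}_h - B\big(\mathbf{Q}_h^2-\tfrac{\mathrm{tr}(\mathbf{Q}_h^2)}{3}\mathbf{I}\big)+C|\mathbf{Q}_h|_F^2\mathbf{Q}_h\Big]+\mathbf{S}_h,
\]
where $\tilde A := A + 2B_0q^4 u_h^2/s_+^2$ collects every scalar multiple of $\mathbf{Q}_h$, and $\mathbf{S}_h := \frac{2B_0q^2}{s_+}\big(u_hD_h^2u_h-\frac{\mathrm{tr}(u_hD_h^2u_h)}{3}\mathbf{I}\big)$ is the $\mathbf{Q}$-independent coupling forcing. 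The constant $S$ in $f_d$ will be taken as $\sup_n\|\mathbf{S}_h^n\|_\infty$. Bounding this supremum requires uniform bounds on $\|u_h^n\|_\infty$ and $\|D_h^2u_h^n\|_\infty$, which we take from Theorem~\ref{th4_1} together with Lemma~\ref{lem3_2} and discrete Sobolev embedding. By the triangle inequality it then suffices to bound the bracketed bulk part.

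For the bulk part, set $\xi=|\mathbf{Q}_h|_F$ and use the sign convention matching $f_d$, i.e. the term $\kappa\mathbf{Q}_h-\mathbf{H}^{bulk}_h$ appearing in $\mNQ$. We handle the three pieces separately.

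\begin{enumerate}
\item The linear-plus-cubic piece $(\kappa-\tilde A-C\xi^2)\mathbf{Q}_h$ is collinear with $\mathbf{Q}_h$, so its Frobenius norm is $|\kappa-\tilde A-C\xi^2|\,\xi$.
\item Under the stated lower bound on $\kappa$, the scalar factor $\kappa-\tilde A-C\xi^2$ is nonnegative in the relevant range $\xi\le\|\mathbf{Q}_h\|_\infty$. The absolute value can therefore be dropped, which gives exactly $\kappa\xi-A\xi-C\xi^3$ up to the nonnegative $u$-dependent part of $\tilde A$. That part is either absorbed into $\kappa$ or discarded, since it only decreases the bound.
\item The quadratic piece is controlled by the standard traceless-tensor inequality: for symmetric traceless $\mathbf{Q}\in\mathbb{R}^{3\times3}$, $\big|\mathbf{Q}^2-\frac{\mathrm{tr}(\mathbf{Q}^2)}{3}\mathbf{I}\big|_F\le\frac{1}{\sqrt6}|\mathbf{Q}|_F^2$. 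We prove this by diagonalizing with eigenvalues $\lambda_1+\lambda_2+\lambda_3=0$ and maximizing $\sum\lambda_i^4-\frac13(\sum\lambda_i^2)^2$ on the sphere $\sum\lambda_i^2=1$. The maximum is $1/6$, attained at $\lambda\propto(2,-1,-1)$. In $d=2$ the term is absent, since $B=0$ there, which is consistent with $b_2=0$. This yields the $b_d\xi^2$ contribution.
\end{enumerate}

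Adding these pieces gives $|\kappa\mathbf{Q}_h+\mathbf{H}_h|_F\le\kappa\xi+f_d(\xi)$.

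The main obstacle is step 2, the sign condition. Dropping the absolute value requires $\kappa\ge\tilde A+C\xi^2$ pointwise. This is why $\kappa$ must dominate $\|\tilde A\|_\infty$ plus a term controlled by $\|\mathbf{Q}_h\|_\infty$. The stated hypothesis writes the second term as $C\|\mathbf{Q}_h\|_\infty$, so we will interpret its constant $C$ generously enough to cover $C\|\mathbf{Q}_h\|_\infty^2$; this is legitimate because $\|\mathbf{Q}_h\|_\infty$ is uniformly bounded by Theorem~\ref{th4_1}. If the factor turned negative, the bound would instead read $(\tilde A+C\xi^2-\kappa)\xi$, which does not have the claimed form. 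The second delicate point is keeping the smectic forcing $S$ uniform in $h$, which relies on the $l^\infty$ control of $D_h^2u_h$.
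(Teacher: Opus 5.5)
Your proof is correct and follows the same route as the paper's. Both arguments separate the collinear part $(\kappa-\tilde A-C|\mathbf{Q}_h|_F^2)\mathbf{Q}_h$, use $|\mathbf{Q}_h^2-\frac13\mathrm{tr}(\mathbf{Q}_h^2)\mathbf{I}|_F=\frac{1}{\sqrt6}|\mathbf{Q}_h|_F^2$ (in fact an identity for symmetric traceless $3\times3$ matrices, since $\mathrm{tr}(\mathbf{Q}^4)=\frac12(\mathrm{tr}\,\mathbf{Q}^2)^2$, so your maximization yields a constant), bound the smectic forcing by $S$, and drop $\tilde A-A\ge 0$. Your two corrections are also what the paper tacitly uses: the flipped sign convention $\kappa\mathbf{Q}_h-\mathbf{H}_h$, and the pointwise requirement $\kappa\ge\tilde A+C|\mathbf{Q}_h|_F^2$, which reappears as $\sup_x\tilde A(x)+C(\eta^{(d)})^2$ in $\kappa_0$ of Theorem~\ref{thm:uniform_bound}.
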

\begin{proof}
Based on the Smectic model, we define the nonlinear operator $\mathbf{H}_h$ element-wise:
\begin{equation*}
\mathbf{H}_h^{ij} = -A Q_h^{ij} + B \left( (\mathbf{Q}_h^2)^{ij} - \frac{1}{d}\text{tr}(\mathbf{Q}_h^2)\delta^{ij} \right) - C \text{tr}(\mathbf{Q}_h^2)Q_h^{ij} - T_1^{ij} - T_2^{ij},
\end{equation*}
where $\mathbf{T}_1 = \frac{2B_0 q^2}{s_+} \left( u_h \cdot D^2 u_h - \frac{\text{tr}(u_h \cdot D^2 u_h)}{d}\mathbf{I} \right)$ and $\mathbf{T}_2 = \frac{2B_0 q^4}{s_+^2} u_h^2 \mathbf{Q}_h$.

On the basis of the Theorem \ref{th4_1}, we define the spatially varying coefficient and the upper bound for the forcing term as:
\begin{align*}
\tilde{A} = A + \frac{2B_0 q^4}{s_+^2} u_h^2,\qquad
S = \left\| \frac{2B_0 q^2}{s_+} ( u_h \cdot D^2 u_h - \frac{\text{tr}(u_h \cdot D^2 u_h)}{d}\mathbf{I})  \right\|_{\infty}.
\end{align*}

To present a unified proof for both two- and three-dimensional cases ($d=2, 3$), we first exploit the fundamental algebraic properties of $d \times d$ traceless symmetric matrices. Specifically, the traceless part of the quadratic term vanishes identically in 2D since $\mathbf{Q}_h^2 = \frac{1}{2}\text{tr}(\mathbf{Q}_h^2)\mathbf{I}$, whereas in 3D, it satisfies the identity $\big| \mathbf{Q}_h^2 - \frac{1}{3}\text{tr}(\mathbf{Q}_h^2)\mathbf{I} \big|_F = \frac{1}{\sqrt{6}} |\mathbf{Q}_h|_F^2$.

By introducing a dimension-dependent parameter $b_d$ (where $b_2 = 0$ and $b_3 = \frac{|B|}{\sqrt{6}}$) and under the condition $\kappa \ge \norm{\tilde{A}+C\bQ}_\infty$, we first apply the triangle inequality to the nonlinear term:
\begin{align*}
    \big| \kappa \mathbf{Q}_h + \mathbf{H}_h \big|_F &\leq \big| (\kappa - \tilde{A} - C|\mathbf{Q}_h|_F^2)\mathbf{Q}_h \big|_F + b_d |\mathbf{Q}_h|_F^2 + |\mathbf{T}_1|_F\\
    &\leq (\kappa - {A} - C|\mathbf{Q}_h|_F^2)|\mathbf{Q}_h|_F + b_d |\mathbf{Q}_h|_F^2 + |\mathbf{T}_1|_F \\
    &\leq \kappa |\mathbf{Q}_h|_F + {f}_d(|\mathbf{Q}_h|_F),
\end{align*}
where $ {f}_d(\xi) := -A\xi + b_d \xi^2 - C\xi^3 + S.$
\end{proof}
\begin{lemma}\label{lemma4_4}
    Consequently, by combining the properties of the scalar function $f_d$ with the \textit{a priori} assumption $\kappa \ge  \max_{\xi \in [0, \eta^{(d)}]} (A  - 2b_d\xi+3C\xi^2)$, we rigorously obtain the desired uniform bound:
    \begin{equation}\label{eq_lemma_final_bound}
        | \kappa \xi + f_d(\xi) | \leq \kappa \eta^{(d)}, \quad \forall \xi \in [0, \eta^{(d)}].
    \end{equation}
\end{lemma}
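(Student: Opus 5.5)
The plan is to reduce the claim to a one-dimensional analysis of the cubic $p(\xi):=\kappa\xi+f_d(\xi)=(\kappa-A)\xi+b_d\xi^2-C\xi^3+S$ on $[0,\eta^{(d)}]$. Here $\eta^{(d)}$ is understood as the bound threshold, a positive number with $f_d(\eta^{(d)})\le 0$; for instance one may take the largest positive root of $f_d$, which exists because $C>0$ forces $f_d(\xi)\to-\infty$. The absolute value splits the target into an upper bound $p(\xi)\le\kappa\eta^{(d)}$ and a lower bound $p(\xi)\ge-\kappa\eta^{(d)}$.

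\emph{Upper bound.} I would use the hypothesis $\kappa\ge\max_{\xi\in[0,\eta^{(d)}]}(A-2b_d\xi+3C\xi^2)$. This expression is exactly $-f_d'(\xi)$, so the hypothesis gives $p'(\xi)=\kappa+f_d'(\xi)\ge 0$ on the whole interval. Hence $p$ is nondecreasing there, and
\[
\max_{[0,\eta^{(d)}]}p=p(\eta^{(d)})=\kappa\eta^{(d)}+f_d(\eta^{(d)})\le\kappa\eta^{(d)},
\]
where the last step uses the defining property $f_d(\eta^{(d)})\le 0$.

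\emph{Lower bound.} By the same monotonicity, $\min_{[0,\eta^{(d)}]}p=p(0)=S\ge 0$, since $S$ is the $l^\infty$-norm of the forcing term $\mathbf{T}_1$. So $p\ge 0\ge-\kappa\eta^{(d)}$, and the absolute value in the statement can be removed. Together, the two bounds give $|p(\xi)|\le\kappa\eta^{(d)}$ for all $\xi\in[0,\eta^{(d)}]$.

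\emph{Main obstacle.} The only delicate point is fixing $\eta^{(d)}$ so that $f_d(\eta^{(d)})\le 0$, since the paper does not define it explicitly. I would set $\eta^{(d)}$ to be the largest positive root of $f_d$. This root exists for $d=2$ ($b_2=0$) and $d=3$ alike, because $f_d(0)=S\ge0$ and the leading coefficient is $-C<0$. If $S=0$ and $A\ge0$ with $b_d=0$, any positive $\eta^{(d)}$ works, since then $f_d\le0$ on $[0,\infty)$.

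The maximum of $A-2b_d\xi+3C\xi^2$ over the compact interval is finite; it equals $\max\{A,\,A-2b_d\eta^{(d)}+3C(\eta^{(d)})^2\}$ because the expression is convex in $\xi$. Consequently the hypothesis on $\kappa$ is a genuine, checkable condition, and the proof needs nothing beyond one-variable calculus.
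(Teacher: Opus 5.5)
Your proof is correct and matches the paper's argument. The hypothesis on $\kappa$ is exactly $\kappa+f_d'(\xi)\ge 0$, so $\kappa\xi+f_d(\xi)$ is nondecreasing on $[0,\eta^{(d)}]$ and is bounded by its endpoint value $\kappa\eta^{(d)}+f_d(\eta^{(d)})\le\kappa\eta^{(d)}$ (any sufficiently large $\eta^{(d)}$ works since $f_d\to-\infty$, so you need not insist on a positive root, which can fail to exist when $S=0$). Your additional lower bound $\kappa\xi+f_d(\xi)\ge S\ge 0$, together with $\kappa\eta^{(d)}\ge S\ge 0$ from the same monotonicity, supplies the half of the absolute-value estimate that the paper's proof never addresses.
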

\begin{proof}
    Analyzing the structure of $f_d(\xi) = -A\xi + b_d \xi^2 - C\xi^3 + S$, we observe that its leading-order term is $-C\xi^3$ (with $C>0$). Therefore, there inherently exists a sufficiently large positive constant $\eta^{(d)}$-depending solely on the physical parameters and the source term bound $S$-such that $f_d(\eta^{(d)}) \le 0$.

Let $g(\xi) := \kappa \xi + f_d(\xi)$ on the interval $[0, \eta^{(d)}]$. To establish the uniform bound $\kappa \eta^{(d)}$, the auxiliary function $g(\xi)$ must be monotonically non-decreasing on $[0, \eta^{(d)}]$.  Requiring $g'(\xi) \ge 0$ yields:
\begin{equation*}
    g'(\xi) = \kappa - A + 2b_d\xi - 3C\xi^2 \ge 0, \quad \forall \xi \in [0, \eta^{(d)}],
\end{equation*}
which imposes the second condition on the stabilization parameter $\kappa$:
\begin{equation}\label{eq_kappa_cond2}
    \kappa \ge  \max_{\xi \in [0, \eta^{(d)}]} (A - 2b_d\xi + 3C\xi^2).
\end{equation}
    Provided that \eqref{eq_kappa_cond2} holds, $g(\xi)$ reaches its maximum at the right endpoint, implying $g(\xi) \le g(\eta^{(d)}) = \kappa \eta^{(d)} + f_d(\eta^{(d)}) \le \kappa \eta^{(d)}$ for all $\xi \in [0, \eta^{(d)}]$. This completes the proof of the uniform bound.
\end{proof}
\begin{theorem}\label{thm:uniform_bound}
    Let $\mathbf{Q}_h^n$ be the numerical solution generated by the ETD1 scheme for the Smectic model. Assume that the initial data satisfies the bounded region condition $\|\mathbf{Q}_h^0\|_{\infty} \leq \eta^{(d)}$, where the truncation parameter $\eta^{(d)} > 0$ is chosen to be sufficiently large (as required by Lemma~\ref{lemma4_4}). If the stabilization parameter $\kappa_1 \geq \frac{1}{G_*} \kappa_0$,
     then the numerical solution unconditionally preserves this maximum norm bound for all time steps, i.e.,
    \begin{equation}
        \|\mathbf{Q}_h^n\|_{\infty} \leq \eta^{(d)}, \quad \forall n \geq 0,
    \end{equation}
\end{theorem}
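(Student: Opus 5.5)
The argument is an induction on $n$ that uses the ETD form \eqref{eq4_6} of the $\mathbf{Q}$-update, with the exponential semigroup viewed as an averaging operator. Suppose $\|\mathbf{Q}_h^n\|_\infty\le\eta^{(d)}$. Split $\mathcal{L}_h = -K\Delta_h + g^n\kappa_1 I$ and write the scheme in integral form:
\begin{equation*}
\mathbf{Q}_h^{n+1} = e^{-\tau\mathcal{L}_h}\mathbf{Q}_h^n + \int_0^\tau e^{-(\tau-\sigma)\mathcal{L}_h}\,\mathbf{N}_h^n\,d\sigma,\qquad \mathbf{N}_h^n = g^n\bigl(\kappa_1\mathbf{Q}_h^n - \mathbf{H}_h^n\bigr).
\end{equation*}
Two properties of $e^{t K\Delta_h}$ do the work. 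It is entrywise nonnegative and preserves constants, because $K\Delta_h$ is the standard 7-point (or 5-point) Laplacian with periodic ghost values, so its off-diagonal entries are nonnegative and its row sums vanish. It also acts componentwise and identically on every entry of the tensor, so at each node $e^{tK\Delta_h}\Phi$ is a convex combination of the values $\Phi_{p',q',r'}$. Since $|\cdot|_F$ is convex, this gives $\|e^{tK\Delta_h}\Phi\|_\infty\le\|\Phi\|_\infty$ in the Frobenius-max norm defined in \eqref{eq_norm}. Because $e^{-t\mathcal{L}_h}=e^{-tg^n\kappa_1}e^{tK\Delta_h}$, it follows that $\|e^{-t\mathcal{L}_h}\Phi\|_\infty\le e^{-tg^n\kappa_1}\|\Phi\|_\infty$.

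Next I bound the nonlinear term pointwise. The sign convention of Lemma~\ref{lem:nonlinear_bound} has $\mathbf{H}_h$ as the negative of \eqref{eq_H_mu_def}, so $\kappa_1\mathbf{Q}_h^n-\mathbf{H}_h^n$ from \eqref{eq_N_Q_def} is exactly the quantity $\kappa\mathbf{Q}_h+\mathbf{H}_h$ of that lemma, where the effective stabilization is $\kappa=\kappa_1$ and the factor $g^n$ is pulled outside. The hypothesis $\kappa_1\ge\kappa_0/G_*$, with $\kappa_0$ the larger of the two thresholds in Lemmas~\ref{lem:nonlinear_bound} and \ref{lemma4_4}, together with $g^n\ge G_*$ from Lemma~\ref{gn_bound}, gives $g^n\kappa_1\ge\kappa_0$. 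With $\xi=|\mathbf{Q}_h^n|_F\in[0,\eta^{(d)}]$, combining Lemma~\ref{lem:nonlinear_bound} and Lemma~\ref{lemma4_4} then yields the nodal bound
\begin{equation*}
|\mathbf{N}_h^n|_F\le g^n\kappa_1\eta^{(d)}.
\end{equation*}

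To close the induction, set $\lambda=g^n\kappa_1$. The two estimates above give
\begin{equation*}
\|\mathbf{Q}_h^{n+1}\|_\infty\le e^{-\lambda\tau}\eta^{(d)}+\int_0^\tau e^{-\lambda(\tau-\sigma)}\,d\sigma\;\lambda\eta^{(d)} = e^{-\lambda\tau}\eta^{(d)}+(1-e^{-\lambda\tau})\eta^{(d)}=\eta^{(d)}.
\end{equation*}
This holds for every $\tau>0$, so the bound is unconditional. Trace and symmetry are preserved because each operator involved maps $\mathbf{S}_h^{(3)}$ into itself.

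The main obstacle is the nonlinear step. Lemma~\ref{lem:nonlinear_bound} is stated for a $\kappa$ multiplying $\mathbf{Q}_h$ directly, whereas in the scheme $g^n$ multiplies both $\kappa_1\mathbf{Q}_h$ and $\mathbf{H}_h$. I will absorb the $g^n$ by requiring the lemma's thresholds for $\kappa_1$ itself; this is where $\kappa_0$ must be read as the maximum of both conditions, and where the lower bound $G_*$ is what keeps $\lambda$ bounded away from zero. A related point of care is that the constants $S$ and $\tilde A$ in Lemma~\ref{lem:nonlinear_bound} depend on $u_h^n$, $D_h^2u_h^n$ and $\|\mathbf{Q}_h\|_\infty$. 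These must be controlled independently of $n$ by Theorem~\ref{th4_1}, and that bound in turn rests on the inductive hypothesis, so the order of quantifiers (first fixing $\eta^{(d)}$, then $\kappa_0$) has to be arranged so the argument is not circular.
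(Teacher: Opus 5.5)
Your proof has the same skeleton as the paper's: induction on $n$, the variation-of-constants form of \eqref{eq4_6}, an $\ell^\infty$ estimate for $e^{-t\mathcal{L}_h}$ with decay factor $e^{-g^n\kappa_1 t}$, the nodal bound $|\mathbf{N}_h^n|_F\le g^n\kappa_1\eta^{(d)}$ from Lemmas~\ref{lem:nonlinear_bound} and~\ref{lemma4_4}, and the identity $e^{-\lambda\tau}\eta^{(d)}+(1-e^{-\lambda\tau})\eta^{(d)}=\eta^{(d)}$. It differs from the paper in two places.

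\emph{The linear part.} The paper cites \cite{du2021} for an $\ell^\infty$ contraction of $e^{t\Delta_h}$, stated with a factor $e^{-\lambda t}$, $\lambda>0$, which cannot hold on constants. Your stochastic-matrix and convexity argument instead proves directly that $e^{tK\Delta_h}$ is nonexpansive in the Frobenius-max norm, including the step from scalar entries to $|\cdot|_F$.

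\emph{The nonlinear part.} The paper writes $\mathbf{N}_h^n=g^n\kappa_1\mathbf{Q}_h^n+\mathbf{H}_h^n$, dropping the $g^n$ in front of $\mathbf{H}_h^n$, and applies Lemma~\ref{lem:nonlinear_bound} with $\kappa=g^n\kappa_1$. That is where $g^n\ge G_*$ and the hypothesis $\kappa_1\ge\kappa_0/G_*$ enter. You keep $g^n$ on both terms as in \eqref{eq_N_Q_def}, factor it out, and use $\kappa=\kappa_1$; this is the version consistent with the scheme.

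With your factorization, however, what you need is $\kappa_1\ge\kappa_0$, not $g^n\kappa_1\ge\kappa_0$ as you wrote; the latter is weaker whenever $g^n>1$. The needed condition does follow from the hypothesis, because $G_*\le g^0=1$ for the natural initialization $s^0=E_{1h}^0$. After that, $G_*$ has no further role in your route. The closing identity holds for every $\lambda=g^n\kappa_1>0$, so nothing has to keep $\lambda$ away from zero.

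Your caveat about $S$ and $\tilde A$ is well placed and applies equally to the paper. Both arguments need $\|D_h^2u_h^n\|_\infty$ bounded uniformly in $n$ and $h$. The $\ell^2$ bound on $\Delta_h u_h^n$ in Theorem~\ref{th4_1} does not by itself give this.
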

where
    \begin{equation}\label{eq_final_kappa}
        \kappa_0= \max \left\{ \sup_x \tilde{A}(x) + C(\eta^{(d)})^2, \,\,   \max_{\xi \in [0, \eta^{(d)}]} (A  - 2b_d\xi+3C\xi^2) \right\}.
    \end{equation}
\begin{proof}
We proceed by mathematical induction. Recalling the definitions from \eqref{eq_L} and \eqref{eq_N_Q_def}, the discrete linear operator and the modified nonlinear source term are respectively given by:
\begin{equation*}
    \mathcal{L}_h = \Delta_h + g^n\kappa_1 \mathcal{I}_h, \qquad \mathbf{N}_h^n = g^n \kappa_1 \mathbf{Q}_h^n + \mathbf{H}_h^n.
\end{equation*}

From \cite{du2021}, under periodic boundary conditions, $\Delta_h$ satisfies the discrete maximum principle, which guarantees that it generates a contraction semigroup in the discrete $L^\infty$-norm. Specifically, there exists a positive constant $\lambda$ such that for any $t \ge 0$ and any grid function $v_h$, we have:
\begin{equation}\label{eq_linear_propagator}
    \| e^{t \Delta_h} v_h \|_{\infty} \le e^{-\lambda t} \|v_h\|_{\infty}.
\end{equation}
By Lemmas \ref{gn_bound} and \ref{lemma4_4}, the dynamic variable $g^n$ has a uniform positive lower bound $g^n \ge G_* > 0$ for all $n \ge 0$. Thus, by choosing the stabilization constant $\kappa_1 \ge \kappa_0 / G_*$, we naturally guarantee the effective stabilization condition $g^n \kappa_1 \ge \kappa_0$, where $\kappa_0$ is defined in \eqref{eq_final_kappa}.

Under the \textit{a priori} assumption $\|\mathbf{Q}_h^n\|_{\infty} \leq \eta^{(d)}$, applying Lemma~\ref{lem:nonlinear_bound} directly yields the following uniform bound for the modified nonlinear term:
\begin{equation}\label{eq_nonlinear_uniform}
    \|\mNQ^n\|_{\infty}\leq \norm{g^n \kappa_1 \big| \mathbf{Q}_h^n \big|_F + f_d(\big| \mathbf{Q}_h^n \big|_F)}_\infty \leq g^n \kappa_1 \eta^{(d)} +  f_d(\eta^{(d)}) \leq g^n \kappa_1 \eta^{(d)}.
\end{equation}

    Now, utilizing the integral representation of \eqref{eq_scheme_Q}, we have:
    \begin{equation*}
        \mathbf{Q}_h^{n+1} = e^{-\tau \mathcal{L}_h} \mathbf{Q}_h^{n} + \int_0^\tau e^{-(\tau-s)\mathcal{L}_h} \mNQ^n \, ds.
    \end{equation*}
    Taking the discrete $L^\infty$-norm on both sides, together with the bounds \eqref{eq_linear_propagator} and \eqref{eq_nonlinear_uniform}, we deduce:
    \begin{equation*}
        \begin{aligned}
            \|\mathbf{Q}_h^{n+1}\|_{\infty}
            &\leq \| e^{-\tau \mathcal{L}_h} \mathbf{Q}_h^{n} \|_{\infty} + \int_0^\tau \big\| e^{-(\tau-s)\mathcal{L}_h} \mNQ^n \big\|_{\infty} ds \\
            &\leq e^{-g^n\kappa_1 \tau} \|\mathbf{Q}_h^{n}\|_{\infty} + \int_0^\tau e^{-g^n\kappa_1(\tau-s)} \|\mNQ^n\|_{\infty} \, ds \\
            &\leq e^{-g^n\kappa_1 \tau} \eta^{(d)} + \int_0^\tau e^{-g^n\kappa_1(\tau-s)} \left(g^n\kappa_1 \eta^{(d)}\right) ds.
        \end{aligned}
    \end{equation*}
    Evaluating the straightforward definite integral yields:
    \begin{equation*}
        \int_0^\tau e^{-g^n\kappa_1(\tau-s)} \left(g^n\kappa_1 \eta^{(d)}\right) ds = g^n\kappa_1 \eta^{(d)} \left[ \frac{1}{g^n\kappa_1} e^{-g^n\kappa_1(\tau-s)} \right]_{s=0}^{s=\tau} = \eta^{(d)} \left( 1 - e^{-g^n\kappa_1 \tau} \right).
    \end{equation*}
    Substituting this back into the inequality, we perfectly close the induction:
    \begin{equation*}
        \|\mathbf{Q}_h^{n+1}\|_{\infty} \leq e^{-g^n\kappa_1 \tau} \eta^{(d)} + \eta^{(d)} - e^{-g^n\kappa_1 \tau} \eta^{(d)} = \eta^{(d)}.
    \end{equation*}
    This completes the proof.
\end{proof}
\section{Convergence Analysis of the Fully Discrete Scheme}
~\\
First, we define the error $\boldsymbol{E}_{\mathbf{Q}}^{n},e_{u}^{n},e_{s}^{n}$ as the difference between the numerical solution  at time $t_n$:
\begin{align*}
\boldsymbol{E}_{\mathbf{Q}}^{n} &= \mathbf{Q}_h^n -  \mathbf{Q}(t_n),\quad
e_{u}^{n} = u_h^n -  u(t_n),\quad
e_{s}^{n} = s^n -  s(t_n),\quad
\tilde{e}_{s}^{n} = \tilde{s}_h^n -  s(t_n),\\
\ea &= \frac{\boldsymbol{E}_{\mathbf{Q}}^{n+1} - \boldsymbol{E}_{\mathbf{Q}}^{n}}{\tau},\quad
\eb = \frac{e_{u}^{n+1} - e_{u}^{n}}{\tau},\quad
\ec = \frac{e_{s}^{n+1} - e_{s}^{n}}{\tau},\\
\delta_t \mathbf{Q}(t_{n+1}) &= \frac{\mathbf{Q}(t_{n+1}) - \mathbf{Q}(t_n)}{\tau},\quad
\delta_t u(t_{n+1}) = \frac{u(t_{n+1}) - u(t_n)}{\tau}.
\end{align*}
By subtracting the exact equations from the fully discrete scheme
and applying the transformation $\mq(\tau \mLQ)$ to both sides of the $\mathbf{Q}$-error equation and $\mq(\tau \mLu)$ to the $u$-error equation, we obtain the following transformed error equations:
\begin{align}
\mq(\tau \mLQ) &\ea + \mLQ \boldsymbol{E}_{\mathbf{Q}}^{n+1} = \mNQ^n-\mNQz(t_n)  - \mathbf{T}_{\mathbf{Q}}^n,\label{eq_error_s_final}\\
    \mq(\tau \mLu) & \eb + \mLu e_{u}^{n+1} =\mNu^n-\mNuz(t_n)  - T_u^{n},\label{eq_error_s_final2}\\
    \frac{1}{\tau} (\tilde{e}_{s}^{n+1}-e_{s}^{n}) &= g^n \left( \left\langle \muQ^n, \ea \right\rangle_h + \left\langle \mu_{\mathbf{u}}^n, \eb \right\rangle_h \right) \no\\
    &\quad + \left\langle g^n \muQ^n-\muQz(t_n), \da  \right\rangle_h + \left\langle g^n \muu^n-\muuz(t_n), \db \right\rangle_h - T_s^n,\label{eq_error_s_final3}\\
    {e}_s^{n+1} &
    = \xi^{n+1} {\left( \tilde{e}_s^{n+1} \right)} + (1 - \xi^{n+1}) {\left( E_{1h}^{n+1} - s(t_{n+1}) \right)}.\label{eq_error_s_final4}
\end{align}
Here and in what follows, the analogous notation convention is applied to other operators and variables.
The corresponding truncation errors $\mathbf{T}_{\mathbf{Q}}^n$, $T_u^n$, and $T_s^n$, arising from the time derivative and Laplace operator discretizations of $\mathbf{Q}$ and $u$, and the time derivative discretizations of   $s$, respectively, can be estimated as follows:
\begin{equation} \label{eq_truncation_estimate}
    \|\mathbf{T}_{\mathbf{Q}}^n\|_h + \|T_u^n\|_h  \le C (\tau + h^2), \quad |T_s^n| \le C \tau.
\end{equation}

\begin{lemma}[Lipschitz Continuity of Nonlinear Terms] \label{lem:Lipschitz}
Under the assumption that the exact solution $(\mathbf{Q}(t), u(t))$ is sufficiently smooth, it follows from Theorem \ref{th4_1} that the discrete nonlinear operators $\muQ$ and $\muu$ satisfy the following Lipschitz continuity conditions:
\begin{align}
        \| \muQ(\mathbf{Q}_h^n, u_h^n) - \muQz(\mathbf{Q}(t_n), u(t_n)) \|_h &\le C_{L_1} \left( \|\boldsymbol{E}_{\mathbf{Q}}^{n}\|_h + \|e_{u}^{n}\|_{H^2_h}+ h^2 \right),\label{eq_Lip_Q}\\
    \| \muu(\mathbf{Q}_h^n, u_h^n) - \muuz(\mathbf{Q}(t_n), u(t_n)) \|_h &\le C_{L_2} \left( \|\boldsymbol{E}_{\mathbf{Q}}^{n}\|_{H^2_h} + \|e_{u}^{n}\|_{H^2_h} + h^2 \right),\label{eq_Lip_Q2}
\end{align}
where the positive constants $C_{L_1}, C_{L_2}$ depend only on $M$, $\Omega$, and the exact solution.
\end{lemma}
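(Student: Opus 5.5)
The strategy is to split each difference into a consistency part and a stability part. For the consistency part, let $\mathbf{Q}^n_\ast$ and $u^n_\ast$ denote the restrictions of the exact solution to the grid. Then
\[
\muQ(\mathbf{Q}_h^n,u_h^n)-\muQz(t_n)=\bigl[\muQ(\mathbf{Q}_h^n,u_h^n)-\muQ(\mathbf{Q}^n_\ast,u^n_\ast)\bigr]+\bigl[\muQ(\mathbf{Q}^n_\ast,u^n_\ast)-\muQz(t_n)\bigr].
\]
The second bracket is a pure spatial consistency error. Every nonlinear term in \eqref{eq_H_mu_def} involves at most the second-order difference operators $D_h^2$, $\Delta_h$, $\nabla_h\cdot\nabla_h\cdot$ applied to smooth grid restrictions. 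Taylor expansion therefore bounds this bracket by $Ch^2$, with $C$ depending on high Sobolev norms of the exact solution. The same argument applies to $\muu$.

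For the first bracket, I would expand each nonlinear term as a difference of products, written in the telescoping form $ab-a_\ast b_\ast=(a-a_\ast)b+a_\ast(b-b_\ast)$. The polynomial bulk parts $A\mathbf{Q}$, $\mathbf{Q}^2-\frac13\mathrm{tr}(\mathbf{Q}^2)\mathbf{I}$, $\mathrm{tr}(\mathbf{Q}^2)\mathbf{Q}$ and $au+bu^2+cu^3$ are locally Lipschitz pointwise. The constants are controlled by $\|\mathbf{Q}_h^n\|_\infty\le\eta^{(d)}$ from Theorem~\ref{thm:uniform_bound} and by $\|u_h^n\|_\infty$, which comes from $\sup_n\|\Delta_h u_h^n\|_h\le C$ in Theorem~\ref{th4_1} via the discrete Sobolev embedding $H_h^2\hookrightarrow\ell^\infty$ for $d\le3$. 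This yields bounds of the form $C(\|\boldsymbol{E}_{\mathbf{Q}}^n\|_h+\|e_u^n\|_h)$.

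For the coupling terms in $\muQ$ I proceed term by term. The term $u\,D_h^2u$ splits as $e_u^n D_h^2u_h^n+u_\ast D_h^2e_u^n$. The first piece is estimated by $\|e_u^n\|_\infty\|D_h^2u_h^n\|_h\le C\|e_u^n\|_{H_h^2}$, again using the embedding. The second piece is estimated by $\|u_\ast\|_\infty\|D_h^2e_u^n\|_h$, using the discrete identity $\|D_h^2 v\|_h\le C\|\Delta_h v\|_h$, which holds by Fourier analysis under periodicity. The term $u^2\mathbf{Q}$ is handled similarly, and together these give \eqref{eq_Lip_Q}.

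For $\muu$, the terms $M:D_h^2u$ and $|M|^2u$ are treated the same way, with $\|\boldsymbol{E}_{\mathbf{Q}}^n\|_\infty\le C\|\boldsymbol{E}_{\mathbf{Q}}^n\|_{H_h^2}$. The main obstacle is the double-divergence term $\nabla_h\cdot(\nabla_h\cdot(M_hu_h))$. I would expand its difference using the discrete product rule in the form \eqref{equation7_35}, which produces three kinds of terms. The terms $(\Delta_h\boldsymbol{E}_{\mathbf{Q}}^n)u_h^n$ and $\mathbf{Q}_\ast\,\Delta_h e_u^n$ are bounded directly by $\|\boldsymbol{E}_{\mathbf{Q}}^n\|_{H_h^2}$ and $\|e_u^n\|_{H_h^2}$ with $\ell^\infty$ coefficients. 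The term $\mathbf{Q}_h^n\,D_h^2e_u^n$ likewise needs only $\|\mathbf{Q}_h^n\|_\infty$. The delicate pieces are the gradient cross terms, $\nabla_h\boldsymbol{E}_{\mathbf{Q}}^n\cdot\nabla_h u_h^n$ and $\nabla_h\mathbf{Q}_h^n\cdot\nabla_h e_u^n$. For these I would use discrete Hölder inequalities in $\ell^4\times\ell^4$ together with the embedding $H_h^1\hookrightarrow\ell^4$. For example,
\[
\|\nabla_h\mathbf{Q}_h^n\cdot\nabla_h e_u^n\|_h\le\|\nabla_h\mathbf{Q}_h^n\|_{\ell^4}\|\nabla_h e_u^n\|_{\ell^4}\le C\|\mathbf{Q}_h^n\|_{H_h^2}\|e_u^n\|_{H_h^2},
\]
where $\sup_n\|\mathbf{Q}_h^n\|_{H_h^2}\le C_\epsilon$ comes from Theorem~\ref{th4_1}.

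Finally, the shift-averaging inherent in the discrete product rule for forward and backward differences produces extra terms. Each of these is $h$ times a bounded quantity applied to an error, and I would absorb them into the same bounds. Collecting all the estimates gives \eqref{eq_Lip_Q2}.
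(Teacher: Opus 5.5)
Your proposal is correct, and it follows the paper's overall structure. Both arguments split each difference with the grid restriction into an $O(h^2)$ consistency part and a purely discrete part, telescope every product, and expand the double divergence with the discrete product rule. Where you genuinely differ is in how the products are paired in H\"older's inequality.

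The paper always keeps the error in an $\ell^2$-type norm and puts $\ell^\infty$ on the other factor. It uses, for example, $\|D_h^2u_h^n\|_\infty\|e_u^n\|_h$, $\|D_h^2u_h^n\|_\infty\|\boldsymbol{E}_{\mathbf{Q}}^n\|_h$, $\|\nabla_h^2M_h^n\|_\infty\|e_u^n\|_h$ and $\|\nabla_hM_h^n\|_\infty\|e_u^n\|_{H_h^1}$. These require uniform $W^{2,\infty}$ bounds on the numerical solution. Theorem~\ref{th4_1} only supplies $\sup_n\|\mathbf{Q}_h^n\|_{H_h^2}$ and $\sup_n\|\Delta_hu_h^n\|_h$, so those bounds are never actually established.

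You instead put the $\ell^\infty$ norm on the error via $H_h^2\hookrightarrow\ell^\infty$. This is affordable because the right-hand sides already contain $\|e_u^n\|_{H_h^2}$, and $\|\boldsymbol{E}_{\mathbf{Q}}^n\|_{H_h^2}$ in the second estimate; in the first estimate $\boldsymbol{E}_{\mathbf{Q}}^n$ only meets $\ell^\infty$ coefficients, so $\|\boldsymbol{E}_{\mathbf{Q}}^n\|_h$ suffices. You also use the periodic Fourier bound $\|D_h^2v\|_h\le C\|\Delta_hv\|_h$, and you treat the gradient cross terms by $\ell^4\times\ell^4$ H\"older with $H_h^1\hookrightarrow\ell^4$. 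As a result you need only the $H_h^2$-level bounds that are actually proved. You also account for the shift terms that the product rule \eqref{eq_discrete_product_rule} silently drops.

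The cost of your route is reliance on discrete Sobolev embeddings that are uniform in $h$, valid for $d\le 3$. The paper's pairing, if $W^{2,\infty}$ bounds were available, would give lower-order error norms on some individual pieces. The stated estimates do not need that.

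Three minor points:
\begin{itemize}
\item $\|u_h^n\|_\infty$ and the $\ell^4$ bound on $\nabla_hu_h^n$ need the full $H_h^2$ norm, not just $\|\Delta_hu_h^n\|_h$. Add an $\ell^2$ bound on $u_h^n$, for instance from the $\mathcal{D}_h$-norm bound in the proof of Lemma~\ref{lem3_2} together with $g^n\ge G_*$.
\item You do not need Theorem~\ref{thm:uniform_bound}, with its extra condition on $\kappa_1$. The bound $\|\mathbf{Q}_h^n\|_\infty\le C\|\mathbf{Q}_h^n\|_{H_h^2}$ already follows from Theorem~\ref{th4_1}.
\item Both you and the paper take the $Ch^2$ consistency of $\nabla_h\cdot(\nabla_h\cdot(Mu))$ for granted. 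That holds only if the operator is read as a centered discretization. A literal composition of the backward differences $D_i^-D_j^-$ is only $O(h)$-consistent at grid points.
\end{itemize}
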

\begin{proof}
We first analyze the Lipschitz continuity for $\muQ$. The nonlinear term $\mNQ$  can be decomposed into the polynomial part and the derivative coupling part as follows:
\begin{equation}
    \begin{aligned}
         \muQ(\mathbf{Q}_h^n, u_h^n) =
        & \underbrace{ A \mathbf{Q}_h^n + B \left( (\mathbf{Q}_h^n)^2 - \frac{\mathrm{tr}((\mathbf{Q}_h^n)^2)}{3}\mathbf{I} \right) - C \mathrm{tr}((\mathbf{Q}_h^n)^2)\mathbf{Q}_h^n +\left[ - \frac{2B_0 q^4}{s_+^2} \mathbf{Q}_h^n (u_h^n)^2 \right] }_{\muQa(\mathbf{Q}_h^n, u_h^n)}\\
        & + \underbrace{ \left[ - \frac{2B_0 q^2}{s_+} \left( u_h^n D_h^2 u_h^n - \frac{\mathrm{tr}(u_h^n D_h^2 u_h^n)}{3}\mathbf{I} \right) \right] }_{\muQb(u_h^n)}.
    \end{aligned}
\end{equation}
For the polynomial part $\muQa$, taking the discrete $l^2$-norm of the difference along with Theorem \ref{th4_1} yields the following bound:
\begin{align}
    \| \muQa^n-\muQaz(t_n)\|_h &\le A\|\boldsymbol{E}_{\mathbf{Q}}^{n}\|_h + B \left( \|\mathbf{Q}_h^n\|_{\infty} + \|\mathbf{Q}(t_n)\|_{\infty} \right) \|\boldsymbol{E}_{\mathbf{Q}}^{n}\|_h + C \left( \|\mathbf{Q}_h^n\|_{\infty}^2 + \|\mathbf{Q}(t_n)\|_{\infty}^2 \right) \|\boldsymbol{E}_{\mathbf{Q}}^{n}\|_h \no \\
    &\quad + C q^4 s_+^{-2} M^2 \|e_{u}^{n}\|_h + C q^4 s_+^{-2} M \|\boldsymbol{E}_{\mathbf{Q}}^{n}\|_h,\no \\
    &\le C(M) \left( \|\boldsymbol{E}_{\mathbf{Q}}^{n}\|_h + \|e_{u}^{n}\|_h \right).
\end{align}

For the derivative coupling part $\muQb^n$, a direct estimation between the discrete and continuous terms mixes spatial and numerical errors. By rigorously introducing the grid restriction operator $\mathcal{I}_h$, we first apply the triangle inequality to separate the spatial truncation error from the purely discrete algebraic difference:
\begin{align} \label{eq_muQb_triangle}
    \| \muQb^n - \muQbz(t_n) \|_h
    &\le \underbrace{ \left\| \muQbz(t_n) - \muQb^n(\mathcal{I}_h u(t_n)) \right\|_h }_{\text{Spatial truncation error} \le C h^2}  + \left\| \muQb^n(\mathcal{I}_h u(t_n)) - \muQb^n(u_h^n) \right\|_h.
\end{align}
To strictly bound the discrete difference in the second term, we   apply the standard discrete cross-term decomposition to the nonlinear difference, which yields:
\begin{equation} \label{eq_cross_term_muQb}
    \mathcal{I}_h u(t_n) D_h^2 \mathcal{I}_h u(t_n) - u_h^n D_h^2 u_h^n = \mathcal{I}_h u(t_n) D_h^2 e_u^n + e_u^n D_h^2 u_h^n.
\end{equation}
Since the trace operator is linear, this decomposition naturally applies to the trace component as well. By absorbing the dimensional constants into a generic constant $C$, and utilizing the uniform $L^\infty$ bounds for both the projected exact solution and the numerical solution, we obtain:
\begin{align} \label{eq_muQb_discrete_bound}
    &\left\| \muQb^n(\mathcal{I}_h u(t_n)) - \muQb^n(u_h^n) \right\|_h \notag \\
    &\quad \le C \left\| \mathcal{I}_h u(t_n) D_h^2 e_u^n + e_u^n D_h^2 u_h^n \right\|_h \notag \\
    &\quad \le C \left( \|\mathcal{I}_h u(t_n)\|_{\infty} \|D_h^2 e_u^n\|_h + \|D_h^2 u_h^n\|_{\infty} \|e_u^n\|_h \right) \notag \\
    &\quad \le C(M) \left( \|e_u^n\|_{H^2_h} + \|e_u^n\|_h \right) \le C(M) \|e_u^n\|_{H^2_h}.
\end{align}
Combining \eqref{eq_muQb_triangle} and \eqref{eq_muQb_discrete_bound}, we conclude that $\| \muQb^n - \muQbz(t_n) \|_h \le C(M) \|e_u^n\|_{H^2_h} + C h^2$. Thus, the Lipschitz continuity for $\muQ$ is established by combining the bounds for both the polynomial part and the derivative coupling part:
\begin{align}
    \| \muQ^n-\muQz(t_n)\|_h &\le \| \muQa^n-\muQaz(t_n)\|_h + \| \muQb^n - \muQbz(t_n) \|_h \notag \\
    &\le C(M) \left( \|\boldsymbol{E}_{\mathbf{Q}}^{n}\|_h + \|e_{u}^{n}\|_{H^2_h} + h^2 \right).
\end{align}

Next, we analyze the Lipschitz continuity for $\muu$. The nonlinear term $\muu$ is decomposed into the polynomial part and the coupling part as follows:
\begin{equation}
    \mu_h(\mathbf{Q}_h^n, u_h^n) =
    \underbrace{ -a u_h^n - b (u_h^n)^2 - c (u_h^n)^3 -2B_0 |M_h^n|^2 u_h^n}_{\mu_{1,h}^n}
     \underbrace{ -2B_0 D_h^2 u_h^n : M_h^n -2B_0 \nabla_h \cdot \nabla_h \cdot (M_h^n u_h^n)}_{\mu_{2,h}^n}.
\end{equation}
Taking the $l^2$ norm of the difference for $\muua$ along with Theorem \ref{th4_1} yields the following bound:
\begin{align}
        \| \muua^n-\muuaz(t_n)\| \le C \left( \|\boldsymbol{E}_{\mathbf{Q}}^{n}\| + \|e_{u}^{n}\| \right).
\end{align}
For the differential coupling part $\mu_{2,h}^n$, which contains the discrete Hessian and double divergence operators, a direct comparison between the continuous and numerical terms would conflate spatial and numerical errors. We rigorously isolate these errors by introducing the spatial grid restriction operator $\mathcal{I}_h$. Applying the triangle inequality, we separate the spatial truncation error from the purely discrete numerical difference:
\begin{align} \label{eq_mu2_triangle}
    \| \mu_2(t_n) - \mu_{2,h}^n(\mathbf{Q}_h^n, u_h^n) \|_h
    &\le \underbrace{ \| \mu_2(t_n) - \mu_{2,h}^n(\mathcal{I}_h \mathbf{Q}(t_n), \mathcal{I}_h u(t_n)) \|_h }_{\text{Spatial truncation error } \|\boldsymbol{\tau}_{\mu_2}^n\|_h \le C h^2} \notag \\
    &\quad + \left\| \mu_{2,h}^n(\mathcal{I}_h \mathbf{Q}(t_n), \mathcal{I}_h u(t_n)) - \mu_{2,h}^n(\mathbf{Q}_h^n, u_h^n) \right\|_h.
\end{align}
To bound the discrete numerical error (the second term in \eqref{eq_mu2_triangle}), we utilize the discrete equivalences of the global errors, $e_u^n \equiv \mathcal{I}_h u(t_n) - u_h^n$ and $\boldsymbol{E}_{\mathbf{Q}}^n \equiv \mathcal{I}_h \mathbf{Q}(t_n) - \mathbf{Q}_h^n$. Since the tensor $M$ depends on $\mathbf{Q}$, its discrete difference is bounded by the order of $\boldsymbol{E}_{\mathbf{Q}}^n$, i.e., $\|\mathcal{I}_h M(t_n) - M_h^n\| \le C \|\boldsymbol{E}_{\mathbf{Q}}^n\|$.

We evaluate the discrete difference by separating it into the Hessian contraction part and the double divergence part. For the Hessian contraction, standard discrete cross-term decomposition yields:
\begin{align} \label{eq_mu2_hessian_bound}
    & 2B_0 \| D_h^2 \mathcal{I}_h u(t_n) : \mathcal{I}_h M(t_n) - D_h^2 u_h^n : M_h^n \|_h \notag \\
    &\quad = 2B_0 \| D_h^2 e_u^n : \mathcal{I}_h M(t_n) + D_h^2 u_h^n : (\mathcal{I}_h M(t_n) - M_h^n) \|_h \notag \\
    &\quad \le C \left( \|\mathcal{I}_h M(t_n)\|_{\infty} \|D_h^2 e_u^n\|_h + \|D_h^2 u_h^n\|_{\infty} \|\boldsymbol{E}_{\mathbf{Q}}^n\|_h \right) \notag \\
    &\quad \le C(M) \left( \|e_u^n\|_{H^2_h} + \|\boldsymbol{E}_{\mathbf{Q}}^n\|_h \right).
\end{align}

For the double divergence part, we apply the discrete product rule expansion to both the projected exact grid functions and the numerical solutions:
\begin{equation} \label{eq_discrete_product_rule}
    \nabla_h \cdot \nabla_h \cdot (M_h^n u_h^n) = (\nabla_h \cdot \nabla_h \cdot M_h^n) u_h^n + 2 (\nabla_h \cdot M_h^n) \cdot \nabla_h u_h^n + M_h^n : D_h^2 u_h^n.
\end{equation}
By matching the corresponding expanded terms and applying Hölder's inequality alongside the uniform $W^{2,\infty}$ bounds, we derive the discrete estimates for the three sub-components:
\begin{equation} \label{eq_mu2_div_bounds}
\begin{aligned}
    \| [(\nabla_h \cdot \nabla_h \cdot \mathcal{I}_h M(t_n)) \mathcal{I}_h u(t_n)] - [(\nabla_h \cdot \nabla_h \cdot M_h^n) u_h^n] \|_h
    &\le C \left( \|\mathcal{I}_h u(t_n)\|_{\infty} \|\boldsymbol{E}_{\mathbf{Q}}^{n}\|_{H^2_h} + \|\nabla_h^2 M_h^n\|_{\infty} \|e_{u}^{n}\|_h \right), \\
    2 \| [(\nabla_h \cdot \mathcal{I}_h M(t_n)) \cdot \nabla_h \mathcal{I}_h u(t_n)] - [(\nabla_h \cdot M_h^n) \cdot \nabla_h u_h^n] \|_h
    &\le C \left( \|\nabla_h \mathcal{I}_h u(t_n)\|_{\infty} \|\boldsymbol{E}_{\mathbf{Q}}^{n}\|_{H^1_h} + \|\nabla_h M_h^n\|_{\infty} \|e_{u}^{n}\|_{H^1_h} \right), \\
    \| [\mathcal{I}_h M(t_n) : D_h^2 \mathcal{I}_h u(t_n)] - [M_h^n : D_h^2 u_h^n] \|_h
    &\le C \left( \|\mathcal{I}_h M(t_n)\|_{\infty} \|e_{u}^{n}\|_{H^2_h} + \|D_h^2 u_h^n\|_{\infty} \|\boldsymbol{E}_{\mathbf{Q}}^{n}\|_h \right).
\end{aligned}
\end{equation}
Summing the sub-components in \eqref{eq_mu2_div_bounds} reveals that the dominant terms require second-order spatial derivatives of the errors. Combining this with the Hessian bound \eqref{eq_mu2_hessian_bound}, the total discrete numerical error for $\mu_{2,h}^n$ is bounded by:
\begin{equation} \label{eq_mu2_total_discrete}
    \left\| \mu_{2,h}^n(\mathcal{I}_h \mathbf{Q}(t_n), \mathcal{I}_h u(t_n)) - \mu_{2,h}^n(\mathbf{Q}_h^n, u_h^n) \right\|_h \le C(M) \left( \|\boldsymbol{E}_{\mathbf{Q}}^n\|_{H^2_h} + \|e_u^n\|_{H^2_h} \right).
\end{equation}
Combining the spatial truncation error and the discrete numerical error, we conclude that $$\| \mu_2(t_n) - \mu_{2,h}^n(\mathbf{Q}_h^n, u_h^n) \|_h \le C(M) \left( \|\boldsymbol{E}_{\mathbf{Q}}^n\|_{H^2_h} + \|e_u^n\|_{H^2_h} + h^2 \right)$$.
Finally, by combining the bounds for both the polynomial part and the coupling part, we establish the Lipschitz continuity for $\muu$:
\begin{align}
    \| \muu^n-\muuz(t_n)\|_h &\le \| \mu_{1,h}^n-\mu_{1}(t_n)\|_h + \| \mu_{2,h}^n- \mu_{2}(t_n) \|_h \notag \\
    &\le C(M) \left( \|\boldsymbol{E}_{\mathbf{Q}}^{n}\|_{H^2_h} + \|e_{u}^{n}\|_{H^2_h} + h^2 \right).
\end{align}
\end{proof}

\begin{theorem} \label{thm:full_lipschitz}
    With the Lipschitz continuity of the nonlinear terms established in Lemma~\ref{lem:Lipschitz} and the error estimates from Theorem \ref{th4_1}, we can rigorously derive the following bound for the multiplier error:
    \begin{equation}
    |g^n - g(t_n)| \le C \left( |e_s^n| + \|\boldsymbol{E}_{\mathbf{Q}}^n\|_{H^1_h} + \|e_u^{n}\|_{H^2_h} + h^2 \right).
\end{equation}
Combining Lemma~\ref{lem:Lipschitz} and Theorem \ref{th4_1}, we obtain the following combined estimates:
\begin{align}
    \| \mNQ^n -\mNQz(t_n)\| \le C_{Lip, \mathbf{Q}} \left( |e_s^n| + \|\boldsymbol{E}_{\mathbf{Q}}^{n}\|_{H^1} + \|e_{u}^{n}\|_{H^2}+ h^2 \right),\label{eq_full_Lip_Q}\\
    \|  \mNu^n- \mNuz(t_n)\| \le C_{Lip, u} \left( |e_s^n| + \|\boldsymbol{E}_{\mathbf{Q}}^{n}\|_{H^2} + \|e_{u}^{n}\|_{H^2}+ h^2 \right),\label{eq_full_Lip_Q2}
\end{align}
where  $C_{Lip, \mathbf{Q}}$ and $C_{Lip, u}$ are positive constants depending only on $M$, $\Omega$, and the exact solution.
\end{theorem}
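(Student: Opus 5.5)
The plan is to prove the multiplier bound first and then obtain \eqref{eq_full_Lip_Q}--\eqref{eq_full_Lip_Q2} from it by a product decomposition.

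For $g$, write $g^n - g(t_n) = e^{s^n - E_{1h}^n} - e^{s(t_n) - E_1(t_n)}$. The mean value theorem for the exponential gives
$$|g^n - g(t_n)| \le e^{\theta}\big(|e_s^n| + |E_{1h}^n - E_1(t_n)|\big),$$
where $\theta$ lies between the two exponents. Both exponents are bounded above uniformly. Indeed, $s^n$ is bounded from above by Theorem \ref{them3_1} and from below by Lemma \ref{gn_bound}, $E_{1h}^n$ is bounded by \eqref{eq_E1_bounds}, and $s(t)-E_1(t)=0$ at the continuous level. Hence $e^\theta\le C$.

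It then remains to bound $|E_{1h}^n - E_1(t_n)|$. I split this as
$$|E_{1h}^n - E_{1h}(\mathcal{I}_h\mathbf{Q}(t_n),\mathcal{I}_h u(t_n))| + |E_{1h}(\mathcal{I}_h\mathbf{Q}(t_n),\mathcal{I}_h u(t_n)) - E_1(t_n)|.$$
The second piece is a consistency (quadrature plus central-difference) error, which is $O(h^2)$ for a smooth exact solution.

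The first piece is a difference of the four terms in \eqref{eq_s_ex_n}, each handled by cross-term decompositions as in Lemma \ref{lem:Lipschitz}:
\begin{itemize}
\item The bulk potentials $f_{bn},f_s$ are polynomial. Using the $L^\infty$ bounds of Theorem \ref{thm:uniform_bound} and Theorem \ref{th4_1}, their difference is at most $C(\|\boldsymbol{E}_{\mathbf{Q}}^n\|_h+\|e_u^n\|_h)$.
\item The term $B_0q^4\|M u\|_h^2$ contributes at most $C(\|\boldsymbol{E}_{\mathbf{Q}}^n\|_h+\|e_u^n\|_h)$.
\item The coupling $\langle D_h^2u,Mu\rangle_h$ is split as $\langle D_h^2 e_u^n, M_h^nu_h^n\rangle_h$ plus terms containing $M_h^n-\mathcal{I}_hM$ or $e_u^n$ multiplied by $D_h^2\mathcal{I}_hu$. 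This yields $C(\|e_u^n\|_{H^2_h}+\|\boldsymbol{E}_{\mathbf{Q}}^n\|_h)$. Alternatively, summation by parts moves one derivative and gives $H^1$-type norms.
\end{itemize}
Collecting these gives the stated bound, which is in fact stronger, since $\|\cdot\|_h\le\|\cdot\|_{H^1_h}$.

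For the nonlinear terms, use \eqref{eq_N_Q_def} and write
$$\mNQ^n-\mNQz(t_n) = -g^n(\muQ^n-\muQz(t_n)) - (g^n-g(t_n))\muQz(t_n) + \kappa_1 g^n\boldsymbol{E}_{\mathbf{Q}}^n + \kappa_1(g^n-g(t_n))\mathbf{Q}(t_n).$$
The factors $g^n$ are bounded by $G^*$ (Lemma \ref{gn_bound}), and $\muQz(t_n)$ and $\mathbf{Q}(t_n)$ are bounded because the exact solution is smooth. Inserting \eqref{eq_Lip_Q} and the $g$-estimate gives \eqref{eq_full_Lip_Q}. The same decomposition for $\mNu$, using \eqref{eq_Lip_Q2}, gives \eqref{eq_full_Lip_Q2}, which carries the $H^2$ norm of $\boldsymbol{E}_{\mathbf{Q}}^n$ inherited from the double divergence term.

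The main obstacle is keeping all constants uniform. This rests on the uniform $L^\infty$ and $W^{2,\infty}$ bounds of $\mathbf{Q}_h^n,u_h^n$ (Theorem \ref{th4_1}, Theorem \ref{thm:uniform_bound}) and on the two-sided bound for $s^n$. In the coupling term of $E_{1h}$, $\|D_h^2u_h^n\|_\infty$ is needed. If only $\|\Delta_h u_h^n\|_h$ is available, I would put all derivatives on the error side, $\langle D_h^2e_u^n,\cdot\rangle$, so that only $L^\infty$ bounds of undifferentiated quantities are required.
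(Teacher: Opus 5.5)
Your proposal is correct and follows the paper's route. A mean-value bound on the exponential reduces $|g^n-g(t_n)|$ to $|e_s^n|+|E_{1h}^n-E_1(t_n)|$; the energy difference is split into an $O(h^2)$ consistency error plus a discrete error handled by cross-term decompositions; and the product identity with the Lipschitz lemma gives the bounds on the nonlinear terms. Your single mean-value step in place of the paper's $\mathcal{T}_1+\mathcal{T}_2$ split is only cosmetic. Your decomposition of $\mNQ^n-\mNQz(t_n)$ also accounts for the $\kappa_1$ stabilization contribution $\kappa_1\bigl(g^n\mathbf{Q}_h^n-g(t_n)\mathbf{Q}(t_n)\bigr)$, which the paper's write-up leaves implicit, since it only bounds the $g\mathbf{H}$ and $g\mu$ products.
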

\begin{proof}
 By the definitions of the numerical multiplier $g^n = \exp(s^n)/\exp(E_{1h}(\mathbf{Q}_h^n, u_h^n))$ and the exact continuous multiplier $g(t_n) = \exp(s(t_n))/\exp(E_1(\mathbf{Q}(t_n), u(t_n)))$, we can decompose the error $|g^n - g(t_n)|$ into two primary parts using the triangle inequality:
\begin{align}
    |g^n - g(t_n)| &\le \underbrace{ \left| \frac{\exp(s^n)}{\exp(E_{1h}(\mathbf{Q}_h^n, u_h^n))} - \frac{\exp(s(t_n))}{\exp(E_{1h}(\mathbf{Q}_h^n, u_h^n))} \right| }_{\mathcal{T}_1} \notag \\
    &\quad + \underbrace{ \left| \frac{\exp(s(t_n))}{\exp(E_{1h}(\mathbf{Q}_h^n, u_h^n))} - \frac{\exp(s(t_n))}{\exp(E_1(\mathbf{Q}(t_n), u(t_n)))} \right| }_{\mathcal{T}_2}. \label{eq_g_diff_split_2step}
\end{align}

By the Mean Value Theorem and the uniform boundedness of the solutions, we first estimate $\mathcal{T}_1$. Fixing the denominator yields
\begin{equation*}
    \mathcal{T}_1 \le \frac{\exp(\xi^n)}{\exp(E_{1h}(\mathbf{Q}_h^n, u_h^n))} |s^n - s(t_n)| \le C |e_s^n|,
\end{equation*}
with $\xi^n$ being a number between $s^n$ and $s(t_n)$.

For $\mathcal{T}_2$, applying the Mean Value Theorem to the exponential function $f(x) = e^{-x}$ gives
\begin{align}
    \mathcal{T}_2 &= \exp(s(t_n)) \left| \exp(-E_{1h}(\mathbf{Q}_h^n, u_h^n)) - \exp(-E_1(\mathbf{Q}(t_n), u(t_n))) \right| \notag \\
    &\le \exp(s(t_n)) \exp(-\eta^n) \left| E_1(\mathbf{Q}(t_n), u(t_n)) - E_{1h}(\mathbf{Q}_h^n, u_h^n) \right| \notag \\
    &\le C \left| E_1(\mathbf{Q}(t_n), u(t_n)) - E_{1h}(\mathbf{Q}_h^n, u_h^n) \right|, \label{eq_T2_energy_diff}
\end{align}
where $\eta^n$ is a number between $E_{1h}(\mathbf{Q}_h^n, u_h^n)$ and $E_1(\mathbf{Q}(t_n), u(t_n))$.

By seamlessly inserting the discrete energy functional evaluated at these interpolated exact solutions, namely $E_{1h}(\mathcal{I}_h \mathbf{Q}(t_n), \mathcal{I}_h u(t_n))$, we can rigorously decompose the total energy difference into the spatial truncation error and the purely numerical error:
\begin{align}
    \left| E_1(\mathbf{Q}(t_n), u(t_n)) - E_{1h}(\mathbf{Q}_h^n, u_h^n) \right| &\le \left| E_1(\mathbf{Q}(t_n), u(t_n)) - E_{1h}(\mathcal{I}_h \mathbf{Q}(t_n), \mathcal{I}_h u(t_n)) \right| \notag \\
    &\quad + \left| E_{1h}(\mathcal{I}_h \mathbf{Q}(t_n), \mathcal{I}_h u(t_n)) - E_{1h}(\mathbf{Q}_h^n, u_h^n) \right|. \label{eq_energy_sub_split}
\end{align}
The first term on the right-hand side of \eqref{eq_energy_sub_split} explicitly characterizes the spatial truncation error of the energy functional. Since the discrete functional $E_{1h}$ safely acts on the properly interpolated grid functions, this difference originates solely from the approximation of continuous integrals and differential operators by their discrete counterparts, which satisfies standard consistency properties:
\ba \label{eq_truncation_bound}
    \left| E_1(\mathbf{Q}(t_n), u(t_n)) - E_{1h}(\mathcal{I}_h \mathbf{Q}(t_n), \mathcal{I}_h u(t_n)) \right| \le C h^2.
\ed
The second term represents the purely numerical error on the discrete level. Since the variable differences correspond exactly to the aforementioned discrete numerical errors $\boldsymbol{E}_{\mathbf{Q}}^n$ and $e_u^n$, we can legally apply the algebraic identities, the Cauchy-Schwarz inequality, Theorem \ref{th4_1}, and utilize the previously derived cross-term decomposition strategy to obtain:
\ba \label{eq_numerical_bound}
    \left| E_{1h}(\mathcal{I}_h \mathbf{Q}(t_n), \mathcal{I}_h u(t_n)) - E_{1h}(\mathbf{Q}_h^n, u_h^n) \right| &\le C( \|\mathcal{I}_h \mathbf{Q}(t_n) - \mathbf{Q}_h^n\|_{H^1_h} + \|\mathcal{I}_h u(t_n) - u_h^n\|_{H^2_h} )\\ &\le C( \| \mathbf{Q}(t_n) - \mathbf{Q}_h^n\|_{H^1_h} + \| u(t_n) - u_h^n\|_{H^2_h} )\\ &\le C \left( \|\boldsymbol{E}_{\mathbf{Q}}^n\|_{H^1_h} + \|e_u^n\|_{H^2_h} \right).
\ed

Substituting \eqref{eq_truncation_bound} and \eqref{eq_numerical_bound} into \eqref{eq_energy_sub_split}, and subsequently incorporating the bounds for $\mathcal{T}_1$ and $\mathcal{T}_2$ back into \eqref{eq_g_diff_split_2step}, we finally arrive at
\begin{equation}\label{eq_14}
    |g^n - g(t_n)| \le C \left( |e_s^n| + \|\boldsymbol{E}_{\mathbf{Q}}^n\|_{H^1_h} + \|e_u^{n}\|_{H^2_h} + h^2 \right).
\end{equation}

Using the algebraic identity $g^n \muQ^n - g(t_n) \muQz(t_n) =  g^n (\muQ^n - \muQz(t_n))+(g^n - g(t_n)) \muQz(t_n) $, along with the Lemma~\ref{lem:Lipschitz} and the estimates \eqref{eq_14}, we can estimate the discrete $l^2$-norm of the product difference as follows:
\begin{equation} \label{eq_J_Q_bound}
\begin{aligned}
    \| g^n \muQ^n - g(t_n) \muQz(t_n) \|_h
    &\le \| \muQz(t_n) \|_h |g^n - g(t_n)| + |g^n| \|\muQ^n- \muQz(t_n)\|_h \\
    &\le  C \left( |e_s^n| + \|\boldsymbol{E}_{\mathbf{Q}}^n\|_{H^1_h} + \|e_u^{n}\|_{H^2_h} +h^2\right) + C \left( \|\boldsymbol{E}_{\mathbf{Q}}^{n}\|_{h} + \|e_{u}^{n}\|_{H^2_h} + h^2 \right) \\
    &\le C \left( |e_s^n| + \|\boldsymbol{E}_{\mathbf{Q}}^{n}\|_{H^1_h} + \|e_{u}^{n}\|_{H^2_h}+h^2 \right).
\end{aligned}
\end{equation}
Similarly, we can also estimate the product difference for $\muu$ as follows:
\begin{equation} \label{eq_J_u_bound}
\begin{aligned}
    \| g^n \muu^n - g(t_n) \muuz(t_n) \|_h
    &\le \| \muuz(t_n) \|_h |g^n - g(t_n)| + |g^n| \|  \muu^n- \muuz(t_n)\|_h \\
    &\le  C \left( |e_s^n| + \|\boldsymbol{E}_{\mathbf{Q}}^n\|_{H^1_h} + \|e_u^{n}\|_{H^2_h} +h^2\right) + C \left( \|\boldsymbol{E}_{\mathbf{Q}}^{n}\|_{H^2_h} + \|e_{u}^{n}\|_{H^2_h} + h^2 \right) \\
    &\le C \left( |e_s^n| + \|\boldsymbol{E}_{\mathbf{Q}}^{n}\|_{H^2_h} + \|e_{u}^{n}\|_{H^2_h}+h^2 \right).
\end{aligned}
\end{equation}
Combining the bounds \eqref{eq_J_Q_bound} and \eqref{eq_J_u_bound} yields the desired conclusion as stated in \eqref{eq_full_Lip_Q} and \eqref{eq_full_Lip_Q2}.
\end{proof}
\begin{theorem}[Fully Discrete Error Estimate] \label{thm:global_error}
    Let $(\mathbf{Q}(t), u(t), s(t))$ be the exact solution to the coupled system, satisfying the regularity assumptions $u, \mathbf{Q} \in L^\infty(0,T; H^{k+2}(\Omega))$ and $\partial_t u, \partial_t \mathbf{Q} \in L^\infty(0,T; H^{k+2}(\Omega))$, where $k$ is the order of the spatial discretization.

    Assume that the time step $\tau$ and mesh size $h$ are sufficiently small. Then, the fully discrete numerical solution $(\mathbf{Q}_h^n, u_h^n, s^n)$ obtained by the ETD1 scheme satisfies the following error estimate for all $n$ such that $n\tau \le T$:
    \begin{equation} \label{eq_final_total_error}
        \| \boldsymbol{E}_{\mathbf{Q}}^{n} \|_{H^1_h} + \| e_{u}^{n} \|_{H^2_h} + |e_s^n| \le C (\tau + h^2),
    \end{equation}
    where $C$ is a positive constant independent of $\tau$, $h$, and $n$.
\end{theorem}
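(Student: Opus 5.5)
We plan an energy-type argument on the transformed error equations \eqref{eq_error_s_final}--\eqref{eq_error_s_final4}, mirroring the proof of Lemma~\ref{lem3_2}. Throughout, $\mathbf{Q}_h^n$ and $u_h^n$ are controlled by Theorem~\ref{th4_1} and Theorem~\ref{thm:uniform_bound}, and $G_*\le g^n\le G^*$ holds by Lemma~\ref{gn_bound}. These bounds justify the constants in Lemma~\ref{lem:Lipschitz} and Theorem~\ref{thm:full_lipschitz}.

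\textbf{Step 1 (energy estimate for $\mathbf{Q}$).} Take the inner product of \eqref{eq_error_s_final} with $2\tau\,\ea$. As in Lemma~\ref{lem3_2}, the left side is bounded below by
\[
\norm{\boldsymbol{E}_{\mathbf{Q}}^{n+1}}_{\mLQ}^2-\norm{\boldsymbol{E}_{\mathbf{Q}}^{n}}_{\mLQ}^2+2\tau\ibQ{\ea}.
\]
By Lemma~\ref{lem_norm_equivalence} and the lower bound $g^n\ge G_*$, the quantity $\norm{\cdot}_{\mLQ}^2$ is equivalent to $\|\cdot\|_{H^1_h}^2$ with constants independent of $n$. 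One subtlety: $g^n$ changes with $n$, so $\mLQ$ does too. The mismatch $(g^{n}-g^{n-1})\kappa_1\|\boldsymbol{E}^n_{\mathbf{Q}}\|_h^2$ must be handled as an extra term, either by a Gronwall-absorbable bound or by writing the estimate in the fixed $K\|\nabla_h\cdot\|^2$ norm and moving the $g^n\kappa_1$ part to the right-hand side.

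On the right, use Young's inequality together with \eqref{eq_full_Lip_Q} and \eqref{eq_truncation_estimate}. The $\|\ea\|_h^2$ contribution is absorbed by $\ibQ{\cdot}\ge\|\cdot\|_h^2$. What remains is
\[
C\tau\left(|e_s^n|^2+\|\boldsymbol{E}^n_{\mathbf{Q}}\|_{H^1_h}^2+\|e_u^n\|_{H^2_h}^2+(\tau+h^2)^2\right).
\]

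\textbf{Step 2 (energy estimate for $u$).} Do the same with \eqref{eq_error_s_final2}, testing by $2\tau\,\eb$. This gives control of $\norm{e_u^{n+1}}_{\mLu}^2\sim\|e_u^{n+1}\|_{H^2_h}^2$.

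The main obstacle is the bound \eqref{eq_full_Lip_Q2}, which involves $\|\boldsymbol{E}^n_{\mathbf{Q}}\|_{H^2_h}$ and is not controlled by the $H^1$ energy. To handle it, also test \eqref{eq_error_s_final} with $\tau\mLQ\boldsymbol{E}^{n+1}_{\mathbf{Q}}$, following the route of \eqref{eq_est2}. This produces a dissipative term $\tau\|\mLQ\boldsymbol{E}^{n+1}_{\mathbf{Q}}\|_h^2\gtrsim\tau\|\boldsymbol{E}^{n+1}_{\mathbf{Q}}\|_{H^2_h}^2$ on the left. We then need the $H^2$ error at level $n$ that appears on the right of the $u$-estimate to be absorbed by this term from the previous step. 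To make that work, weight the $u$-estimate by a small constant $\varepsilon$, so that $\varepsilon C\tau\|\boldsymbol{E}^n_{\mathbf{Q}}\|_{H^2_h}^2$ is dominated by the accumulated dissipation. An index shift handles the $n$ versus $n+1$ mismatch, and the initial term $\boldsymbol{E}^0_{\mathbf{Q}}$ is taken to be zero or $O(h^2)$.

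If the direct absorption fails, the fallback is the duality approach used for \eqref{eq_Nu_bound}. Move the double divergence onto $\ma^{-\epsilon/2}$ and bound $\langle\mNu^n-\mNuz(t_n),\eb\rangle_h$ using the stronger $\mLu$-dissipation of $\eb$ in place of the $\mathbf{Q}$ regularity.

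\textbf{Step 3 (scalar variable).} For $s$, multiply \eqref{eq_error_s_final3} by $2\tau\tilde e_s^{n+1}$. The term $g^n\langle\muQ^n,\ea\rangle_h$ is bounded by $\|\ea\|_h|\tilde e^{n+1}_s|$, where $\muQ^n$ is uniformly bounded by Theorem~\ref{th4_1}. Young's inequality then absorbs it into the $\ibQ{}$ and $\ibu{}$ dissipation, and the $u$-analogue $g^n\langle\mu_h^n,\eb\rangle_h$ is treated the same way. The terms with $g^n\muQ^n-\muQz(t_n)$ are treated through \eqref{eq_J_Q_bound}, since $\da$ and $\db$ are bounded, and likewise via \eqref{eq_J_u_bound}.

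For the relaxation step \eqref{eq_error_s_final4}, since $\xi^{n+1}\in[0,1]$ we have
\[
|e^{n+1}_s|\le|\tilde e^{n+1}_s|+|E_{1h}^{n+1}-s(t_{n+1})|.
\]
Because $s(t)=E_1(t)$, the last term is bounded as in \eqref{eq_numerical_bound} by $C(\|\boldsymbol{E}^{n+1}_{\mathbf{Q}}\|_{H^1_h}+\|e^{n+1}_u\|_{H^2_h}+h^2)$. Its square enters with a factor that is not small, however, so we instead use $e_s^{n+1}$ convex-combination control through $|e_s^{n+1}|^2\le \xi|\tilde e_s^{n+1}|^2+(1-\xi)|\cdot|^2$.

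\textbf{Step 4 (Gronwall).} Sum Steps 1--3 over $n$. Because the $(1-\xi)$ term is not controlled by $\tau$ alone, this requires that the $H^1$ and $H^2$ errors at $n+1$ also be bounded by the left side with a constant that is absorbed for $\tau$ small. Then apply the discrete Gronwall lemma, which yields \eqref{eq_final_total_error} with squared norms; taking square roots finishes the proof.
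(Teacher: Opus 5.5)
The gap is in your Step 4, at the relaxation step. Your Steps 1--3 use the same test functions as the paper: $\delta_t$ of the errors, plus $\mathcal{L}_h\boldsymbol{E}_{\mathbf{Q}}$ to produce the $H^2_h$ dissipation that \eqref{eq_full_Lip_Q2} needs. That part is sound, and you also correctly flag the $n$-dependence of $\mathcal{L}_h$ through $g^n$, which the paper ignores. Your convex-combination bound is correct:
\[
|e_s^{n+1}|^2\le \xi^{n+1}|\tilde e_s^{n+1}|^2+(1-\xi^{n+1})\,C_E\big(\|\boldsymbol{E}_{\mathbf{Q}}^{n+1}\|_{H^1_h}^2+\|e_u^{n+1}\|_{H^2_h}^2+h^4\big),
\]
where $C_E$ comes from \eqref{eq_numerical_bound}. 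However, the last term carries no factor $\tau$. By \eqref{eq_xi_optimal}, $\xi^{n+1}=0$ at every step with $E_{1h}^{n+1}\le\tilde s^{n+1}$, and that may be every step.

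If you telescope $|e_s^n|^2$ and sum, these terms accumulate to $C_E\sum_n\big(\|\boldsymbol{E}_{\mathbf{Q}}^{n+1}\|_{H^1_h}^2+\|e_u^{n+1}\|_{H^2_h}^2\big)$. That is $\tau^{-1}$ times what Gronwall can handle. If instead you absorb them step by step into the left side, you get a per-step amplification $(1-c)^{-1}$ with $c>0$ independent of $\tau$, which compounds to $(1-c)^{-T/\tau}$. Refining $\tau$ makes this worse, so ``absorbed for $\tau$ small'' is not an available mechanism.

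The paper avoids the issue by asserting $e_s^{n+1}=\tilde e_s^{n+1}+\eta_0\tau\mathcal{R}^{n+1}$, which keeps a factor $\tau$. But from \eqref{eq_xi_optimal} that identity holds only when $E_{1h}^{n+1}>\tilde s^{n+1}$, and then only up to a factor in $[0,1]$ on the $\mathcal{R}$ term. In the other branch $e_s^{n+1}=E_{1h}^{n+1}-s(t_{n+1})$, which is exactly your case.

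The missing idea is to exploit the reset structure instead of telescoping. Since $e_s^{n+1}$ is a convex combination,
\[
|e_s^{n+1}|\le\max\{|\tilde e_s^{n+1}|,\,|E_{1h}^{n+1}-s(t_{n+1})|\}.
\]
The second entry is controlled by the current $\mathbf{Q},u$ errors via \eqref{eq_numerical_bound}. Meanwhile $e_s$ feeds back into the $\mathbf{Q}$- and $u$-inequalities only through $C\tau|e_s^n|^2$.

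So let $D_m$ be the $\mathbf{Q},u$ error energy at step $m$ plus the accumulated dissipation $\tau\sum_{k\le m}\big(\|\delta_t\boldsymbol{E}_{\mathbf{Q}}^{k}\|^2_{\mathcal{Q}^1_{\mathcal{L}}}+\|\delta_t e_u^{k}\|^2_{\mathcal{Q}^1_{\mathcal{D}}}+\|\boldsymbol{E}_{\mathbf{Q}}^{k}\|^2_{H^2_h}\big)$. Iterate the predictor recursion $|\tilde e_s^{j}|^2\le(1+C\tau)|e_s^{j-1}|^2+\cdots$ only back to the last index at which the second entry of the max is active, or back to $j=0$. This gives $|e_s^m|^2\le C\big(\max_{k\le m}D_k+|e_s^0|^2+(\tau+h^2)^2\big)$.

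Insert this into the summed $\mathbf{Q},u$ inequality to get $\max_{k\le m}D_k\le C\big(D_0+|e_s^0|^2+(\tau+h^2)^2\big)+C\tau\sum_{k<m}\max_{j\le k}D_j$. The discrete Gronwall lemma applied to this running maximum then closes the estimate. Without an argument of this kind, Step 4 does not go through.
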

\begin{proof}
    Under the standard induction hypothesis $\|\boldsymbol{E}_{\mathbf{Q}}^n\|_{H_h^1}^2+\ib{e_u^{n}} \le \frac{1}{2}$ and the temporary bootstrap assumption $\|\boldsymbol{E}_{\mathbf{Q}}^{n+1}\|_{H_h^1}^2+\ib{e_u^{n+1}} \le \frac{1}{2}$, the norm of the error increment is naturally bounded by a generic constant via the triangle inequality:
\ba\label{eq_error_increment_bound}
   \ibQ{\boldsymbol{E}_{\mathbf{Q}}^{n+1} - \boldsymbol{E}_{\mathbf{Q}}}&\le  \|\boldsymbol{E}_{\mathbf{Q}}^{n+1} - \boldsymbol{E}_{\mathbf{Q}}^n\|_{H_h^1}^2 \le \|\boldsymbol{E}_{\mathbf{Q}}^{n+1}\|_{H_h^1}^2 + \|\boldsymbol{E}_{\mathbf{Q}}^n\|_{H_h^1}^2 \le 1,\\
    \ibu{e_u^{n+1} - e_u^n} &\le \|e_u^{n+1} - e_u^n\|_{H_h^2}^2 \le \|e_u^{n+1}\|_{H_h^2}^2 + \|e_u^n\|_{H_h^2}^2 \le 1.
\ed

Taking the discrete inner product of (\ref{eq_error_s_final}) with $\ea$ and (\ref{eq_error_s_final2}) with $\eb$, and summing them up, we obtain the global error energy equation
\begin{align*}
        &\left\langle  \mathcal{Q}(\tau \mLQ) \ea, \ea \right\rangle_h + \left\langle \mLQ \boldsymbol{E}_{\mathbf{Q}}^{n+1}, \ea \right\rangle_h
    + \left\langle  \mathcal{Q}(\tau \mLu) \eb, \eb \right\rangle_h + \left\langle \mLu e_{u}^{n+1}, \eb \right\rangle_h \\
    &= \left\langle   \mNQ^n -\mNQz(t_n) - \mathbf{T}_{\mathbf{Q}}^n, \ea \right\rangle_h + \left\langle \mathcal{N}_u^{n}-\mNuz(t_n)- \mathbf{T}_{u}^n, \eb \right\rangle_h.
\end{align*}
Applying a standard algebraic identity and the norm defined in \eqref{eq_inner_Q} to the left-hand side yields:
\begin{equation} \label{eq_LHS_expanded_norm}
\begin{aligned}
    \text{LHS} &= \iaQ{\ea} + \frac{1}{2\tau} \left[  \ilQ{\boldsymbol{E}_{\mathbf{Q}}^{n+1}} - \ilQ{\boldsymbol{E}_{\mathbf{Q}}^{n}}  + \left\langle \tau^2 \mLQ \ea, \ea \right\rangle_h \right] \\
    &\quad + \iau{\eb} + \frac{1}{2\tau} \left[ \ilu{e_{u}^{n+1}}  - \ilu{e_{u}^{n}}  +\left\langle \tau^2 \mLu \delta_t  e_{u}^{n+1}, \eb \right\rangle_h  \right] \\
    &= \ibQ{\ea} + \frac{1}{2\tau} \left[  \ilQ{\boldsymbol{E}_{\mathbf{Q}}^{n+1}} - \ilQ{\boldsymbol{E}_{\mathbf{Q}}^{n}}   \right]
    + \ibu{\eb} + \frac{1}{2\tau} \left[ \ilu{e_{u}^{n+1}}  - \ilu{e_{u}^{n}}  \right].
\end{aligned}
\end{equation}
For the right-hand side, applying Young's inequality with the parameter $\gamma = 1/8$, and subsequently utilizing the Lipschitz bounds from Theorem \ref{thm:full_lipschitz} alongside the truncation error bounds from Theorem \ref{th4_1}, we can bound the RHS as follows:
\begin{equation} \label{eq_RHS_final_bound}
\begin{aligned}
    \text{RHS} &\leq 2\gamma \left( \ia{\ea} + \ia{\eb} \right) + \frac{1}{4\gamma} \left( \ia{\mathcal{N}_u^{n}-\mNuz(t_n)-\mathbf{T}_{\mathbf{Q}}^n} + \ia{\mNQ^n -\mNQz(t_n)-\mathbf{T}_{u}^n} \right) \\
    &= \frac{1}{2} \left( \|\ea\|_h^2 + \|\eb\|_h^2 \right) \\
    &\quad+  \left( \|\mathcal{N}_u^{n}-\mNuz(t_n)\|_h^2  + \ia{\mNQ^n -\mNQz(t_n)} + \ia{\mathbf{T}_{\mathbf{Q}}^n} + \|\mathbf{T}_{u}^n\|_h^2 \right) \\
    &\leq \frac{1}{4} \left( \|\ea\|_h^2 + \|\eb\|_h^2 \right) +   C_{trunc} (\tau+h^2)^2 \\
    &\quad +   C_{Lip, \mathbf{Q}}^2 \left( |e_s^n|^2 + \|\boldsymbol{E}_{\mathbf{Q}}^{n}\|_{H_h^1}^2 + \|e_{u}^{n}\|_{H_h^2}^2 \right) + C_{Lip, u}^2 \left( |e_s^n|^2 + \|\boldsymbol{E}_{\mathbf{Q}}^{n}\|_{H_h^2}^2 + \|e_{u}^{n}\|_{H_h^2}^2 \right)  \\
    &\leq \frac{1}{4} \left( \|\ea\|_h^2 + \|\eb\|_h^2 \right)
     + C_{nonlin} \left( |e_s^n|^2 + \|\boldsymbol{E}_{\mathbf{Q}}^{n}\|_{H_h^2}^2 + \|e_{u}^{n}\|_{H_h^2}^2 \right)+ C_{trunc} (\tau+h^2)^2.
\end{aligned}
\end{equation}
Combining the estimates in \eqref{eq_LHS_expanded_norm} and \eqref{eq_RHS_final_bound}, we obtain the following error energy inequality:
\begin{equation} \label{eq_error_energy_recursive}
\begin{aligned}
    &\frac{3}{4} \left( \|\ea\|_h^2 + \|\eb\|_h^2 \right) + \frac{1}{2\tau} \left[  \ilQ{\boldsymbol{E}_{\mathbf{Q}}^{n+1}} - \ilQ{\boldsymbol{E}_{\mathbf{Q}}^{n}}   + \ilu{e_{u}^{n+1}}  - \ilu{e_{u}^{n}}  \right] \\
    &\leq C_{nonlin} \left( |e_s^n|^2 + \|\boldsymbol{E}_{\mathbf{Q}}^{n}\|_{H_h^2}^2 + \|e_{u}^{n}\|_{H_h^2}^2 \right)+ C_{trunc} (\tau+h^2)^2.
\end{aligned}
\end{equation}

Taking the discrete inner product of (\ref{eq_error_s_final}) with the test function $\mLQ \boldsymbol{E}_{\mathbf{Q}}^{n}$, we obtain the $H^1$ energy evolution equation with $H^2$ dissipation:
\begin{equation} \label{eq_H2_bound_shifted_detailed}
\begin{aligned}
    &\left\langle  \mQ \delta_t \boldsymbol{E}_{\mathbf{Q}}^{n}, \mLQ \boldsymbol{E}_{\mathbf{Q}}^{n} \right\rangle_h + \|\mLQ \boldsymbol{E}_{\mathbf{Q}}^{n}\|_h^2 = \left\langle \delta \mNQ^n - \mathbf{T}_{\mathbf{Q}}^{n-1}, \mLQ \boldsymbol{E}_{\mathbf{Q}}^{n} \right\rangle_h.
\end{aligned}
\end{equation}
Applying a standard algebraic identity to the first term on the left-hand side, along with Lemma \ref{lem_norm_equivalence}, yields:
\begin{equation}
\begin{aligned}
 \left\langle  \mQ\delta_t \boldsymbol{E}_{\mathbf{Q}}^{n}, \mLQ\boldsymbol{E}_{\mathbf{Q}}^{n} \right\rangle_h&= \left\langle  \mQ\mLQ\delta_t \boldsymbol{E}_{\mathbf{Q}}^{n}, \boldsymbol{E}_{\mathbf{Q}}^{n} \right\rangle_h
 \\&=\frac{1}{2\tau}\left(\left\langle \mQ \mLQ  \boldsymbol{E}_{\mathbf{Q}}^{n}, \boldsymbol{E}_{\mathbf{Q}}^{n} \right\rangle_h - \left\langle \mQ \mLQ  \boldsymbol{E}_{\mathbf{Q}}^{n-1}, \boldsymbol{E}_{\mathbf{Q}}^{n-1} \right\rangle_h\right.\\
 &\quad \left. + \left\langle \mQ \mLQ  (\boldsymbol{E}_{\mathbf{Q}}^{n}-\boldsymbol{E}_{\mathbf{Q}}^{n-1}), \boldsymbol{E}_{\mathbf{Q}}^{n}-\boldsymbol{E}_{\mathbf{Q}}^{n-1} \right\rangle_h\right)
  \\&=\frac{1}{2\tau} \left( \icQ{\boldsymbol{E}_{\mathbf{Q}}^{n}} - \icQ{\boldsymbol{E}_{\mathbf{Q}}^{n-1}} \right) + \frac{1}{2}\tau \left(  \left\langle \mathcal{Q} (\tau \mLQ)\mLQ  (\delta_t \boldsymbol{E}_{\mathbf{Q}}^{n}),\delta_t \boldsymbol{E}_{\mathbf{Q}}^{n}\right\rangle_h\right) \\
    &\geq \frac{1}{2\tau} \left( \icQ{\boldsymbol{E}_{\mathbf{Q}}^{n}} - \icQ{\boldsymbol{E}_{\mathbf{Q}}^{n-1}} \right).
\end{aligned}
\end{equation}
 Then the RHS can be estimated by applying Young's inequality with the same parameter $\gamma = \frac{1}{2}$, which yields:
 \begin{align}
     \left\langle \delta \mNQ^n - \mathbf{T}_{\mathbf{Q}}^{n-1}, \mLQ \boldsymbol{E}_{\mathbf{Q}}^{n} \right\rangle_h &\le \frac{1}{2} \|\delta \mNQ^n - \mathbf{T}_{\mathbf{Q}}^{n-1}\|_h^2 + \frac{1}{2} \|\mLQ \boldsymbol{E}_{\mathbf{Q}}^{n}\|_h^2\no\\&\le C \left( |e_s^{n-1}|^2 + \|\boldsymbol{E}_{\mathbf{Q}}^{n-1}\|_h^2 + \|e_{u}^{n-1}\|_{H_h^2}^2 \right) + C (\tau+h^2)^2+ \frac{1}{2} \|\mLQ \boldsymbol{E}_{\mathbf{Q}}^{n}\|_h^2.\label{eq_source_prev_step}
 \end{align}
Substituting (\ref{eq_source_prev_step}) back into (\ref{eq_H2_bound_shifted_detailed}), we have:
\begin{equation} \label{eq_H2_full_bound}
   \frac{C_1}{\tau} \left( \icQ{\boldsymbol{E}_{\mathbf{Q}}^{n}} - \icQ{\boldsymbol{E}_{\mathbf{Q}}^{n-1}} \right)+ \|\boldsymbol{E}_{\mathbf{Q}}^{n}\|_{H_h^2}^2 \le   C_2 \left( |e_s^{n-1}|^2 + \|\boldsymbol{E}_{\mathbf{Q}}^{n-1}\|_{H_h^1}^2 + \|e_{u}^{n-1}\|_{H_h^2}^2 \right) + C_3 (\tau+h^2)^2.
\end{equation}

Next, we consider the error equations \eqref{eq_error_s_final3}--\eqref{eq_error_s_final4} for the auxiliary variable $s$. Returning to \eqref{eq_error_s_final4}, we first rearrange the update formula for $s^{n+1}$. By subtracting the exact solution $s(t_{n+1})$, the corresponding error $e_s^{n+1}$ can be expressed as:
\begin{equation} \label{eq_es_derivation}
\begin{aligned}
    e_s^{n+1} &= s^{n+1} - s(t_{n+1}) \\
    &= \left[ \xi^{n+1} \tilde{s}^{n+1} + (1 - \xi^{n+1}) E_{1h}^{n+1} \right] - s(t_{n+1}) \\
    &= \tilde{s}^{n+1} - s(t_{n+1}) + (1 - \xi^{n+1})(E_{1h}^{n+1} - \tilde{s}^{n+1}) \\
    &= \tilde{e}_s^{n+1} + (1 - \xi^{n+1})(E_{1h}^{n+1} - \tilde{s}^{n+1}).
\end{aligned}
\end{equation}
Substituting the explicit expression for $\xi^{n+1}$ given in \eqref{eq_xi_optimal} yields the exact error relation for the auxiliary variable:
\begin{equation}
    e_s^{n+1} = \tilde{e}_s^{n+1} + \eta_0 \tau \mathcal{R}^{n+1}.
\end{equation}
To extract the desired $(1+\tau)$ amplification factor necessary for the application of the discrete Gronwall lemma, we directly apply the weighted Young's inequality in the form $(a+b)^2 \le (1+\tau)a^2 + (1+\tau^{-1})b^2$. This elegantly bounds the squared norm in a single cohesive step:
\begin{equation} \label{eq_es_intermediate_bound}
\begin{aligned}
    |e_s^{n+1}|^2 &\le (1+\tau)|\tilde{e}_s^{n+1}|^2 + \left(1+\frac{1}{\tau}\right) (\eta_0 \tau \mathcal{R}^{n+1})^2 \\
    &= (1+\tau)|\tilde{e}_s^{n+1}|^2 + \frac{1+\tau}{\tau} \eta_0^2 \tau^2 (\mathcal{R}^{n+1})^2 \\
    &= (1+\tau)|\tilde{e}_s^{n+1}|^2 + \eta_0^2 \tau (1+\tau) (\mathcal{R}^{n+1})^2.
\end{aligned}
\end{equation}

Recalling the definition of the discrete dissipation rate $\mathcal{R}^{n+1}$,  we apply the basic inequality $(a+b)^2 \le 2a^2 + 2b^2$ to separate the variables:
\begin{equation} \label{eq_nonlinear_split1}
   \eta_0^2  (\mathcal{R}^{n+1})^2 = \eta_0^2 \tau^2 \left( \ibQ{\delta_t \mathbf{Q}_h^{n+1}}+ \ibu{\delta_t u_h^{n+1}}\right)^2 \leq 2\eta_0^2 \tau^2 \|\delta_t \mathbf{Q}_h^{n+1}\|_h^4 + 2\eta_0^2 \tau^2 \|\delta_t u_h^{n+1}\|_h^4.
\end{equation}
Utilizing the weighted inequality $(a+b)^4 \le (1+\epsilon) b^4 + C_\epsilon a^4$ with the choice of $\epsilon = 1/2$, we can stringently bound the highly nonlinear terms while minimizing the coefficient accumulated on the error component. The expansion is given as follows:
\begin{equation} \label{eq_nonlinear_split2}
\begin{aligned}
    2\eta_0^2 \tau^2 \|\delta_t \mathbf{Q}_h^{n+1}\|_{\mathcal{Q}_\mathcal{L}^1}^4 &\leq C_\epsilon \eta_0^2 \tau^2 \|\delta_t \mathbf{Q}(t_{n+1})\|_{\mathcal{Q}_\mathcal{L}^1}^4 + 3\eta_0^2 \tau^2 \|\delta_t \eQ\|_{\mathcal{Q}_\mathcal{L}^1}^4, \\
    2\eta_0^2 \tau^2 \|\delta_t u_h^{n+1}\|_{\mathcal{Q}_\mathcal{D}^1}^4 &\leq C_\epsilon \eta_0^2 \tau^2 \|\delta_t u(t_{n+1})\|_{\mathcal{Q}_\mathcal{D}^1}^4 + 3\eta_0^2 \tau^2 \|\delta_t e_u^{n+1}\|_{\mathcal{Q}_\mathcal{D}^1}^4,
\end{aligned}
\end{equation}
where $C_\epsilon$ is a positive constant depending only on $\epsilon$.
By the smoothness of the exact solutions, the terms involving the exact time derivatives, $\|\delta_t \mathbf{Q}(t_{n+1})\|_{\mathcal{Q}_\mathcal{L}^1}^4$ and $\|\delta_t u(t_{n+1})\|_{\mathcal{Q}_\mathcal{D}^1}^4$, are bounded by a generic constant $C$ independent of $\tau$ and $h$.
For the error parts, applying the discrete Sobolev embedding inequality alongside the standard \textit{a priori} energy assumption (which dictates that the spatial gradients of the errors are sufficiently small), we can bound these terms and subsequently absorb them into the left-hand side. Specifically, on the basis of \eqref{eq_error_increment_bound},  we can obtain the following bounds for the error terms:
\begin{equation}\label{eq_error_nonlinear_bound}
\begin{aligned}
    3\eta_0^2\tau^2\|\delta_t \eQ\|_{\mathcal{Q}_\mathcal{L}^1}^4 &\leq 3\eta_0^2\|\delta \eQ\|_{\mathcal{Q}_\mathcal{L}^1}^2\|\delta_t \eQ\|_{\mathcal{Q}_\mathcal{L}^1}^2 \leq \frac{1}{4} \|\delta_t \eQ\|_{\mathcal{Q}_\mathcal{L}^1}^2, \\
    3\eta_0^2\tau^2\|\delta_t e_u^{n+1}\|_{\mathcal{Q}_\mathcal{D}^1}^4 &\leq 3\eta_0^2\|\delta e_u^{n+1}\|_{\mathcal{Q}_\mathcal{D}^1}^2\|\delta_t e_u^{n+1}\|_{\mathcal{Q}_\mathcal{D}^1}^2 \leq \frac{1}{4} \|\delta_t e_u^{n+1}\|_{\mathcal{Q}_\mathcal{D}^1}^2.
\end{aligned}
\end{equation}
Substituting the bounds from \eqref{eq_nonlinear_split1}, \eqref{eq_nonlinear_split2}, and \eqref{eq_error_nonlinear_bound} back into \eqref{eq_es_intermediate_bound}, we arrive at the following key estimate for the auxiliary variable error:
\begin{equation}\label{eq_sub_corr}
    |e_s^{n+1}|^2 \le (1 + \tau) |\tilde{e}_s^{n+1}|^2 + \tau(1 + \tau) (3\eta_0^2 \tau^2 C + \frac{1}{4} \|\delta_t \boldsymbol{e}_{\mathbf{Q}}^{n+1}\|_h^2 + \frac{1}{4} \|\delta_t e_u^{n+1}\|_h^2).
\end{equation}

Next, we return to the error equation \eqref{eq_error_s_final3} for $\tilde{s}$.
Multiplying the error equation \eqref{eq_error_s_final3} for $\tilde{s}$  by $2\tilde{e}_s^{n+1}$ yields the following equation:
\begin{equation} \label{eq_es_identity}
\begin{aligned}
    &\frac{1}{\tau} \left( |\tilde{e}_s^{n+1}|^2 - |e_s^n|^2 + |\tilde{e}_s^{n+1} - e_s^n|^2 \right)
    = \underbrace{2 \tilde{e}_s^{n+1} \Big[ g^n \langle \muQ^n, \ea \rangle_h + g^n \langle \mu_h^n, \eb \rangle_h \Big]}_{\mathcal{T}_1}\\
    & \qquad+ \underbrace{2\tilde{e}_s^{n+1} \Big[ \left\langle g^n \muQ^n-\muQz(t_n), \da  \right\rangle_h + \left\langle g^n \muu^n-\muuz(t_n), \db \right\rangle_h -R_{1s}^n  \Big]}_{\mathcal{T}_2}.
\end{aligned}
\end{equation}
Using Theorem \ref{th4_1} and Young's inequality, we can bound the term $\mathcal{T}_1$ in \eqref{eq_es_identity} as follows:
\begin{equation} \label{eq_I2_bound}
\begin{aligned}
   \mathcal{T}_1&\le 2 |\tilde{e}_s^{n+1}| \cdot |g^n| \cdot \left( C_\mu \|\ea\|_h + C_\mu' \|\eb\|_h \right) \\
    &\le 2 |\tilde{e}_s^{n+1}| \cdot C_g \cdot C_\mu \|\ea\|_h + 2 |\tilde{e}_s^{n+1}| \cdot C_g \cdot C_\mu' \|\eb\|_h  \\
    &\le ( 8C_g^2 C_\mu^2   +8C_g^2 (C_\mu')^2) |\tilde{e}_s^{n+1}|^2 + \frac{1}{8} \|\ea\|_h^2 + \frac{1}{8}  \|\eb\|_h^2 .
\end{aligned}
\end{equation}
Using Theorem \ref{thm:full_lipschitz} and the temporal regularity of the exact solution, we can bound the term $\mathcal{T}_2$ in \eqref{eq_es_identity} as follows:
\begin{equation} \label{eq_I1_bound}
\begin{aligned}
    \mathcal{T}_2&\le  2|\tilde{e}_s^{n+1}|\cdot   \left(C_{Lip, \mathbf{Q}} \left( |e_s^n| + \|\boldsymbol{E}_{\mathbf{Q}}^{n}\|_{H_h^1} + \|e_{u}^{n}\|_{H_h^2}+ h^2 \right)\right.\\
     &\qquad \left.
    +C_{Lip, u} \left( |e_s^n| + \|\boldsymbol{E}_{\mathbf{Q}}^{n}\|_{H_h^2} + \|e_{u}^{n}\|_{H_h^2}+ h^2 \right) +C(\tau+h^2) \right)  \\
    &\le C \left( |\tilde{e}_s^{n+1}|^2 + |e_s^n|^2 + \|\boldsymbol{E}_{\mathbf{Q}}^{n}\|_{H_h^1}^2 + \|\boldsymbol{E}_{\mathbf{Q}}^{n}\|_{H_h^2}^2+ \|e_{u}^{n}\|_{H_h^2}^2 + (\tau+h^2)^2  \right).
\end{aligned}
\end{equation}
Substituting the bounds \eqref{eq_I1_bound} and \eqref{eq_I2_bound} back into the identity \eqref{eq_es_identity}, we arrive at the following overall estimate for the evolution of the error $e_s$:
\begin{equation} \label{eq_es_final_estimate}
\begin{aligned}
    \frac{1}{\tau} &\left( |\tilde{e}_s^{n+1}|^2 - |e_s^n|^2 \right) \le C \left( |\tilde{e}_s^{n+1}|^2 + |e_s^n|^2  \right) \\
    &\qquad + \underbrace{C \left( \|\boldsymbol{E}_{\mathbf{Q}}^{n}\|_{H_h^1}^2 + \|\boldsymbol{E}_{\mathbf{Q}}^{n}\|_{H_h^2}^2+ \|e_{u}^{n}\|_{H_h^2}^2 + (\tau + h^2)^2 \right) + \frac{1}{8} \left( \|\ea\|_h^2 + \|\eb\|_h^2 \right)}_{\mathcal{R}_{pred}}.
\end{aligned}
\end{equation}

Multiplying the predictor error equation \eqref{eq_es_final_estimate} by $\tau$ and rearranging the terms, we isolate $|\tilde{e}_s^{n+1}|^2$ on the left-hand side:
\begin{equation} \label{eq_pred_isolate}
    (1 - C\tau) |\tilde{e}_s^{n+1}|^2 \le (1 + C\tau) |e_s^n|^2 + \tau \mathcal{R}_{pred}.
\end{equation}
Under the mild time-step restriction $C\tau \le 1/2$, we can divide by $(1-C\tau)$. Utilizing the basic expansion $(1-C\tau)^{-1} \le 1 + 2C\tau$, we obtain an explicit upper bound for the intermediate error:
\ba \label{eq_sub_pred}
    |\tilde{e}_s^{n+1}|^2 &\le (1 + C\tau) (1 + 2C\tau) |e_s^n|^2 + \tau (1 + 2C\tau) \mathcal{R}_{pred}\\
    &\le (1 + C'\tau) |e_s^n|^2 + \tau (1+2C\tau) \mathcal{R}_{pred},
\ed
Substituting this intermediate estimate \eqref{eq_sub_pred} into \eqref{eq_sub_corr} leads to the combined bound:
\ba \label{eq_sub_combined}
    |e_s^{n+1}|^2 &\le (1 + \tau)(1 + C'\tau) |e_s^n|^2 + \tau (1+\tau)(1+2C\tau) \mathcal{R}_{pred} \\&+ 2\tau \left( 3\eta_0^2 \tau^2 C + \frac{1}{4} \|\delta_t \boldsymbol{e}_{\mathbf{Q}}^{n+1}\|_h^2 + \frac{1}{4} \|\delta_t e_u^{n+1}\|_h^2 \right)\\
    & \le(1 + C_s\tau) |e_s^n|^2+ 2\tau \left( 3\eta_0^2 \tau^2 C + \frac{1}{4} \|\delta_t \boldsymbol{e}_{\mathbf{Q}}^{n+1}\|_h^2 + \frac{1}{4} \|\delta_t e_u^{n+1}\|_h^2 \right)\\
    &+\tau(1+C'\tau)\left(C \left( \|\boldsymbol{E}_{\mathbf{Q}}^{n}\|_{H_h^1}^2 + \|\boldsymbol{E}_{\mathbf{Q}}^{n}\|_{H_h^2}^2+ \|e_{u}^{n}\|_{H_h^2}^2 + (\tau + h^2)^2 \right) + \frac{1}{8} \left( \|\ea\|_h^2 + \|\eb\|_h^2 \right)\right),\\
    &\le (1 + C_s\tau) |e_s^n|^2+C \tau\left( \|\boldsymbol{E}_{\mathbf{Q}}^{n}\|_{H_h^1}^2 + \|\boldsymbol{E}_{\mathbf{Q}}^{n}\|_{H_h^2}^2+ \|e_{u}^{n}\|_{H_h^2}^2 + (\tau + h^2)^2 \right) +\frac{1}{4} \tau \left( \|\ea\|_h^2 + \|\eb\|_h^2 \right)
\ed

By expanding the products, absorbing the resulting $\mathcal{O}(\tau)$ coefficients into a new generic constant $C_s$, subtracting $|e_s^n|^2$, and dividing by $\tau$, we recover the unified single-step error equation:
\begin{equation} \label{eq_unified_single}
    \frac{|e_s^{n+1}|^2 - |e_s^n|^2}{\tau} \le C_s |e_s^n|^2 + C \left( \|\boldsymbol{E}_{\mathbf{Q}}^{n}\|_{H_h^1}^2 + \|\boldsymbol{E}_{\mathbf{Q}}^{n}\|_{H_h^2}^2+ \|e_{u}^{n}\|_{H_h^2}^2 + (\tau + h^2)^2 \right) + \frac{1}{4} \left( \|\ea\|_h^2 + \|\eb\|_h^2 \right) .
\end{equation}

Combining \eqref{eq_unified_single} with the error energy inequality \eqref{eq_error_energy_recursive} and applying the $H_h^2$-norm bound \eqref{eq_H2_full_bound} yields the following overall energy inequality:
\begin{equation} \label{eq_error_evolution_shifted}
\begin{aligned}
    &\frac{1}{\tau} \left( |{e}_s^{n+1}|^2 - |e_s^n|^2 \right) + \frac{\tilde{C}_{mix}}{\tau} \left[ \icQ{\boldsymbol{E}_{\mathbf{Q}}^{n}} - \icQ{\boldsymbol{E}_{\mathbf{Q}}^{n-1}} \right] \\
    &+ \frac{1}{2\tau} \left[ \|\boldsymbol{E}_{\mathbf{Q}}^{n+1}\|_{\mathcal{L}_h}^2 - \|\boldsymbol{E}_{\mathbf{Q}}^{n}\|_{\mathcal{L}_h}^2 \right]
     + \frac{1}{2\tau} \left[ \|e_{u}^{n+1}\|_{\mathcal{D}_h}^2 - \|e_{u}^{n}\|_{\mathcal{D}_h}^2 \right]  \\
    &\leq C' (\tau + h^2)^2 + C' \left( |e_s^n|^2 + |e_s^{n-1}|^2 +\|\boldsymbol{E}_{\mathbf{Q}}^{n}\|_{H_h^1}^2+ \|\boldsymbol{E}_{\mathbf{Q}}^{n-1}\|_{H_h^1}^2 + \|e_{u}^{n}\|_{H_h^2}^2 + \|e_{u}^{n-1}\|_{H_h^2}^2 \right).
\end{aligned}
\end{equation}
To simplify the notation, we introduce the total discrete energy functional $E^n$ at time step $n$, which incorporates the corresponding norm coefficients:
\begin{equation*}
    E^n \coloneqq |e_s^n|^2 + \frac{1}{2}\|\boldsymbol{E}_{\mathbf{Q}}^{n}\|_{\mathcal{L}_h}^2 + \tilde{C}_{\text{mix}}\icQ{\boldsymbol{E}_{\mathbf{Q}}^{n-1}} + \frac{1}{2}\|e_{u}^{n}\|_{\mathcal{D}_h}^2.
\end{equation*}
Substituting this definition into the error evolution equation and utilizing the norm equivalences between the standard Sobolev norms and the weighted norms (i.e., $\|v\|_{H^1}^2 \le C \|v\|_{\mathcal{L}_h}^2$ and $\|v\|_{H^2}^2 \le C \|v\|_{\mathcal{D}_h}^2$), we deduce the following difference inequality:
\begin{equation*}
    \frac{E^{n+1} - E^n}{\tau} \le C (\tau + h^2)^2 + C (E^n + E^{n-1}),
\end{equation*}
where $C > 0$ is a generic constant independent of the time step $\tau$ and the spatial mesh size $h$.

Multiplying both sides by $\tau$ and summing over the time index $n$ from $1$ to $m-1$ (where $1 \le m \le N = T/\tau$), we obtain:
\begin{equation*}
    \sum_{n=1}^{m-1} (E^{n+1} - E^n) \le C \tau \sum_{n=1}^{m-1} (\tau + h^2)^2 + C \tau \sum_{n=1}^{m-1} (E^n + E^{n-1}).
\end{equation*}
The left-hand side is a telescoping sum that evaluates exactly to $E^m - E^1$. For the right-hand side, since $(m-1)\tau = t_{m-1} \le T$, the truncation error term is bounded by $C T (\tau + h^2)^2$. Furthermore, the summation of the energy terms can be neatly bounded as $\sum_{n=1}^{m-1} (E^n + E^{n-1}) \le 2 \sum_{k=0}^{m-1} E^k$. Consequently, the inequality simplifies to:
\begin{equation*}
    E^m \le E^1 + C T (\tau + h^2)^2 + 2C \tau \sum_{k=0}^{m-1} E^k.
\end{equation*}

Now, applying the standard discrete Gronwall lemma directly yields:
\begin{equation*}
    E^m \le \left( E^1 + C T (\tau + h^2)^2 \right) \exp\left( \sum_{k=0}^{m-1} 2C\tau \right) \le \left( E^1 + C T (\tau + h^2)^2 \right) e^{2CT}.
\end{equation*}

Provided that the initial error satisfies $E^0 \le C_0 (\tau + h^2)^2$, which is naturally guaranteed by standard spatial projections of the exact initial data. Furthermore, since the proposed numerical scheme is inherently a self-starting one-step method, the numerical solution at $t_1$ is directly computed from the initial state. Consequently, the error at the very first step, $E^1$, is strictly governed by the local truncation error of a single time iteration. Given the formal first-order temporal accuracy of the scheme, it trivially satisfies $E^1 \le C_0 (\tau + h^2)^2$ without mandating any specialized starting procedures.
Provided that the initial errors satisfy $E^0 \le C_0 (\tau + h^2)^2$ and $E^1 \le C_0 (\tau + h^2)^2$ (which can be achieved via a sufficiently accurate initialization scheme), we arrive at the final optimal error estimate:
\begin{equation*}
    E^m \le \tilde{C} (\tau + h^2)^2,
\end{equation*}
where $\tilde{C} > 0$ is a constant depending on $T$, $C$, and the initial data $C_0$, but strictly independent of $\tau$ and $h$. This completes the proof.
\end{proof}

\section{Numerical experiments}\label{section6}
 In this section, we present several numerical experiments to validate the MBP preservation, energy dissipation and convergence properties of the fully discrete SAV-EI scheme \ref{eq4_6} for the smectic-A liquid crystal model.
 The computational domain is taken as $\Omega=(0,2 \pi)^{3}$.
 \subsection{Convergence, MBP, and energy dissipation tests}\label{dim2}
In this subsection, the MBP, energy dissipation and convergence orders of the  proposed SAV-EI schemes are verified through numerical experiments.
  We take $N=128$ Fourier modes in each direction.
The initial conditions are set to be
\begin{align*}
    Q_0(x, y) &= \mathbf{n}\mathbf{n}^T - \frac{1}{2} I
    = \begin{pmatrix}
        \cos^2(x+y) - \frac{1}{2} & \cos(x+y)\sin(x+y) \\
        \cos(x+y)\sin(x+y) & \sin^2(x+y) - \frac{1}{2}
    \end{pmatrix}, \\
    u_0(x, y) &= 0.25 \cos(2\pi q x).
\end{align*}
 In our numerical simulations, the state of the smectic liquid crystal is primarily governed by the absolute temperature $T$. Following standard nondimensionalization, the rescaled temperatures are defined relative to the characteristic critical temperatures: $A = T - T_1^*$ for the isotropic-nematic transition and $a = T - T_2^*$ for the nematic-smectic transition. By setting the system temperature to $T = -1$, and the critical temperatures to $T_1^* = 0$ and $T_2^* = 4$, we obtain $A = -1$ and $a = -5$, ensuring the system is deeply quenched into the stable smectic phase. The remaining dimension-independent parameters are set as: elastic constant $K = 0.1$, bulk constant $C = 2.0$, nonlinear coefficients $b = 0$ and $c = 5$, characteristic wave number $q = 5$, coupling strength $B_0 = 0.7 \times 10^{-4}$, and mobility coefficients $\kappa_Q = \kappa_u = 8$.Due to the distinct algebraic properties of the $Q$-tensor in different spatial dimensions, the reference nematic scalar order parameter $s_+$ is defined separately for the 2D and 3D cases:For two-dimensional (2D) simulations ($d=2$), the system typically undergoes a continuous phase transition. Under the physical condition $A < 0$, the equilibrium order parameter is analytically given by:$$s_+ = \sqrt{\frac{-2A}{C}}.$$

  \textbf{Convergence tests.}
  This test verifies the convergence orders of the SAV-EI scheme in terms of both $\infty$-norm, the $l^2$-norm and the $H^1$-norm for the $Q$-tensor, as well as the $\infty$-norm, the $l^2$-norm and the $H^2$-norm for the velocity field $\bu$.
We set the final time to $T=1$ and calculate the numerical solution with a time step  size of $\tau=2^{-k}\tau_{1}$, where $k=0,1,\ldots,7$ and $\tau_{1}=2^{-7}$. For the lack of the exact solution, the numerical solution generated with $\tau=2^{-8}\tau_{1}$ at $T=1$ is regarded as the benchmark solution. The errors and convergence rates are presented in Table \ref{tab:error_Q} for the $Q$-tensor and Table \ref{tab:error_u_s} for the velocity field $\bu$ and the scalar variable $s$. The results confirm that the proposed SAV-EI scheme achieves first-order temporal accuracy in all norms, which is consistent with the theoretical analysis.
\begin{table}[htbp]
  \centering
  \setlength{\tabcolsep}{8pt}
  \begin{tabular}{l ccc}
  \toprule
  \multirow{2}{*}{$\tau$} & \multicolumn{3}{c}{$Q$} \\
  \cmidrule(lr){2-4}
   & $\infty$-norm & $l^2$-norm & $H^1$-norm \\
  \midrule
  $2^{-8}$  & 8.30e-3 (--)   & 3.62e-3 (--)   & 3.90e-2 (--)   \\
  $2^{-9}$  & 3.79e-3 (1.13) & 1.56e-3 (1.21) & 1.70e-2 (1.20) \\
  $2^{-10}$ & 1.60e-3 (1.24) & 6.72e-4 (1.22) & 7.35e-3 (1.21) \\
  $2^{-11}$ & 6.87e-4 (1.22) & 3.03e-4 (1.15) & 3.30e-3 (1.16) \\
  $2^{-12}$ & 3.12e-4 (1.14) & 1.43e-4 (1.09) & 1.54e-3 (1.10) \\
  $2^{-13}$ & 1.48e-4 (1.08) & 6.92e-5 (1.04) & 7.43e-4 (1.05) \\
  $2^{-14}$ & 7.20e-5 (1.04) & 3.40e-5 (1.02) & 3.64e-4 (1.03) \\
  $2^{-15}$ & 3.55e-5 (1.02) & 1.69e-5 (1.01) & 1.80e-4 (1.01) \\
  $2^{-16}$ & 1.76e-5 (1.01) & 8.41e-6 (1.01) & 8.98e-5 (1.01) \\
  $2^{-17}$ & 8.78e-6 (1.01) & 4.20e-6 (1.00) & 4.48e-5 (1.00) \\
  \bottomrule
  \end{tabular}
  \caption{Errors and convergence rates for the tensor field $Q$.}
  \label{tab:error_Q}
\end{table}

\begin{table}[htbp]
  \centering
  \setlength{\tabcolsep}{8pt}
  \begin{tabular}{l ccc c}
  \toprule
  \multirow{2}{*}{$\tau$} & \multicolumn{3}{c}{$u$} & $s$ \\
  \cmidrule(lr){2-4} \cmidrule(lr){5-5}
   & $\infty$-norm & $l^2$-norm & $H^2$-norm & $l^2$-norm \\
  \midrule
  $2^{-8}$  & 1.25e-1 (--)   & 6.92e-2 (--)   & 1.85e+0 (--)   & 6.46e-6 (--) \\
  $2^{-9}$  & 6.86e-2 (0.86) & 3.78e-2 (0.87) & 1.00e+0 (0.88) & 6.25e-6 (0.05) \\
  $2^{-10}$ & 3.22e-2 (1.09) & 1.79e-2 (1.08) & 4.79e-1 (1.07) & 4.20e-6 (0.57) \\
  $2^{-11}$ & 1.41e-2 (1.20) & 7.90e-3 (1.18) & 2.13e-1 (1.17) & 2.46e-6 (0.77) \\
  $2^{-12}$ & 6.22e-3 (1.18) & 3.48e-3 (1.18) & 9.37e-2 (1.19) & 1.32e-6 (0.89) \\
  $2^{-13}$ & 2.87e-3 (1.12) & 1.59e-3 (1.13) & 4.26e-2 (1.14) & 6.83e-7 (0.96) \\
  $2^{-14}$ & 1.37e-3 (1.07) & 7.57e-4 (1.07) & 2.02e-2 (1.08) & 3.45e-7 (0.98) \\
  $2^{-15}$ & 6.69e-4 (1.03) & 3.68e-4 (1.04) & 9.80e-3 (1.04) & 1.73e-7 (0.99) \\
  $2^{-16}$ & 3.31e-4 (1.02) & 1.82e-4 (1.02) & 4.83e-3 (1.02) & 8.69e-8 (1.00) \\
  $2^{-17}$ & 1.64e-4 (1.01) & 9.02e-5 (1.01) & 2.39e-3 (1.01) & 4.35e-8 (1.00) \\
  \bottomrule
  \end{tabular}
  \caption{Errors and convergence rates for the displacement field $u$ and scalar $s$.}
  \label{tab:error_u_s}
\end{table}
  Table \ref{tab:error_Q} presents the errors and  convergence rates between the numerical solutions and the benchmark solution for both the $\mathcal{Z}$-norm and  the $2$-norm.  The results indicate that the proposed SAV-EI scheme achieves approximately first-order convergence in time for the $Q$-tensor in all three norms. Table \ref{tab:error_u_s} shows the errors and convergence rates for the velocity field $\bu$ and the auxiliary variable $s$. The observed convergence rates are consistent with the theoretical predictions, confirming the first-order temporal accuracy of the scheme for both variables.

 \vskip 0.2cm
{\bf Acknowledgements.} G. Ji is partially supported by the National Natural Science Foundation of China (Grant No. 12471363).

\bibliographystyle{siam}
\bibliography{S0362546X14002934}

\begin{thebibliography}{10}

\bibitem{deGennes1974}
{\sc P.-G. de~Gennes}, {\em The Physics of Liquid Crystals}, Oxford University
  Press, Oxford, 1974.

\bibitem{du2018stabilized}
{\sc Q.~Du, L.~Ju, X.~Li, and Z.~Qiao}, {\em Stabilized linear semi-implicit
  schemes for the nonlocal cahn--hilliard equation}, Journal of Computational
  Physics, 363 (2018), pp.~39--54.

\bibitem{du2019}
\leavevmode\vrule height 2pt depth -1.6pt width 23pt, {\em Maximum principle
  preserving exponential time differencing schemes for the nonlocal allen--cahn
  equation}, SIAM Journal on Numerical Analysis, 57 (2019), pp.~875--898.

\bibitem{du2021}
{\sc Q.~{D}u, L.~Ju, X.~Li, and Z.~Qiao}, {\em Maximum bound principles for a
  class of semilinear parabolic equations and exponential time-differencing
  schemes}, SIAM Review, 63 (2021), pp.~317--359.

\bibitem{han2015microscopic}
{\sc J.~Han, Y.~Luo, W.~Wang, P.~Zhang, and Z.~Zhang}, {\em From microscopic
  theory to macroscopic theory: a systematic study on modeling for liquid
  crystals}, Archive for rational mechanics and analysis, 215 (2015),
  pp.~741--809.

\bibitem{hicks2024modelling}
{\sc A.~L. Hicks and S.~W. Walker}, {\em Modelling and simulation of the
  cholesteric landau-de gennes model}, Proceedings of the Royal Society A, 480
  (2024), p.~20230813.

\bibitem{hou2025energy}
{\sc D.~Hou, X.~Li, Z.~Qiao, and N.~Zheng}, {\em Energy stable and maximum
  bound principle preserving schemes for the-tensor flow of liquid crystals},
  SIAM Journal on Numerical Analysis, 63 (2025), pp.~854--880.

\bibitem{jiang2022improving}
{\sc M.~Jiang, Z.~Zhang, and J.~Zhao}, {\em Improving the accuracy and
  consistency of the scalar auxiliary variable (sav) method with relaxation},
  Journal of Computational Physics, 456 (2022), p.~110954.

\bibitem{ju2022generalized}
{\sc L.~Ju, X.~Li, and Z.~Qiao}, {\em Generalized sav-exponential integrator
  schemes for allen--cahn type gradient flows}, SIAM journal on numerical
  analysis, 60 (2022), pp.~1905--1931.

\bibitem{liu2025maximum}
{\sc Y.~Liu, C.~Quan, and D.~Wang}, {\em On the maximum bound principle and
  energy dissipation of exponential time differencing methods for the
  matrix-valued allen--cahn equation}, IMA Journal of Numerical Analysis, 45
  (2025), pp.~3342--3377.

\bibitem{liu2024novel}
{\sc Z.~Liu, Y.~Zhang, and X.~Li}, {\em A novel energy-optimized technique of
  sav-based (eop-sav) approaches for dissipative systems}, Journal of
  Scientific Computing, 101 (2024), p.~38.

\bibitem{pazy2012semigroups}
{\sc A.~Pazy}, {\em Semigroups of linear operators and applications to partial
  differential equations}, Springer Science \& Business Media, 2012.

\bibitem{shen2018scalar}
{\sc J.~Shen, J.~Xu, and J.~Yang}, {\em The scalar auxiliary variable (sav)
  approach for gradient flows}, Journal of Computational Physics, 353 (2018),
  pp.~407--416.

\bibitem{shen2019new}
\leavevmode\vrule height 2pt depth -1.6pt width 23pt, {\em A new class of
  efficient and robust energy stable schemes for gradient flows}, SIAM Review,
  61 (2019), pp.~474--506.

\bibitem{shi2025modified}
{\sc B.~Shi, Y.~Han, C.~Ma, A.~Majumdar, and L.~Zhang}, {\em A modified
  landau--de gennes theory for smectic liquid crystals: phase transitions and
  structural transitions}, SIAM Journal on Applied Mathematics, 85 (2025),
  pp.~821--847.

\bibitem{wang2021modelling}
{\sc W.~Wang, L.~Zhang, and P.~Zhang}, {\em Modelling and computation of liquid
  crystals}, Acta Numerica, 30 (2021), pp.~765--851.

\bibitem{xia2023variational}
{\sc J.~Xia and P.~E. Farrell}, {\em Variational and numerical analysis of a
  q-tensor model for smectic-a liquid crystals}, ESAIM: Mathematical Modelling
  and Numerical Analysis, 57 (2023), pp.~693--716.

\bibitem{xia2021structural}
{\sc J.~Xia, S.~MacLachlan, T.~J. Atherton, and P.~E. Farrell}, {\em Structural
  landscapes in geometrically frustrated smectics}, Physical review letters,
  126 (2021), p.~177801.

\bibitem{zhang2025novel}
{\sc B.~Zhang, C.~Zhou, and H.~Fu}, {\em A novel efficient generalized
  energy-optimized exponential sav scheme with variable-step bdfk method for
  gradient flows}, Applied Numerical Mathematics, 210 (2025), pp.~39--63.

\bibitem{zhang2022generalized}
{\sc Y.~Zhang and J.~Shen}, {\em A generalized sav approach with relaxation for
  dissipative systems}, Journal of Computational Physics, 464 (2022),
  p.~111311.

\end{thebibliography}

\end{document}